\providecommand{\tabularnewline}{\\}
\numberwithin{equation}{section}
\numberwithin{figure}{section}
\theoremstyle{plain}
\newtheorem{thm}{\protect\theoremname}[section]
  \theoremstyle{remark}
  \newtheorem{rem}[thm]{\protect\remarkname}
  \theoremstyle{definition}
  \newtheorem{example}[thm]{\protect\examplename}
  \theoremstyle{plain}
  \newtheorem{fact}[thm]{\protect\factname}
  \theoremstyle{plain}
  \newtheorem{lem}[thm]{\protect\lemmaname}
  \theoremstyle{plain}
  \newtheorem{prop}[thm]{\protect\propositionname}
\date{}
  \providecommand{\examplename}{Example}
  \providecommand{\factname}{Fact}
  \providecommand{\lemmaname}{Lemma}
  \providecommand{\propositionname}{Proposition}
  \providecommand{\remarkname}{Remark}
\providecommand{\theoremname}{Theorem}
\begin{document}
\global\long\def\C{\mathbb{C}}

\global\long\def\R{\mathbb{R}}

\global\long\def\Z{\mathbb{Z}}

\global\long\def\T{\mathbb{T}}

\global\long\def\Q{\mathbb{Q}}

\global\long\def\F{\mathbb{F}}

\global\long\def\N{\mathbb{N}}

\global\long\def\Sph{\mathbb{S}}

\global\long\def\RP{\mathbb{RP}}

\global\long\def\GL{{\rm GL}}

\global\long\def\GPL{{\rm PGL_{n+1}}\left(\R\right)}

\global\long\def\CP{\mathbb{CP}}

\global\long\def\lindep{\text{linearly independent}}

\global\long\def\K{{\cal K}}

\global\long\def\A{{\cal A}}

\global\long\def\W{{\cal W}}

\global\long\def\L{{\cal L}}

\global\long\def\ovec#1{\overrightarrow{#1}}

\global\long\def\dvec{\ovec d}

\global\long\def\cvec{\ovec c}

\global\long\def\sp{{\rm sp}}

\global\long\def\Psp{{\rm \overline{sp}}}

\global\long\def\iprod#1#2{\langle#1,\,#2\rangle}

\global\long\def\phee{\varphi}

\global\long\def\Px{\varphi\left(\bar{x}\right)}

\global\long\def\Py{\varphi\left(\bar{y}\right)}

\global\long\def\Po{\varphi\left(\bar{0}\right)}

\global\long\def\ta{\widetilde{a}}

\global\long\def\tb{\widetilde{b}}

\global\long\def\tF{\widetilde{F}}

\global\long\def\sub{\subseteq}

\title{The Fundamental Theorems of Affine and Projective Geometry Revisited}

\author{Shiri Artstein-Avidan\thanks{Supported by ISF grant No. 665/15}
~and Boaz A. Slomka\thanks{Corresponding author}}

\maketitle
\noindent \begin{center}
\begin{tabular}{ccccccc}
School of Mathematical Sciences  &  &  &  &  &  & Department of Mathematics \tabularnewline
Tel Aviv University &  &  &  &  &  & University of Michigan\tabularnewline
Tel Aviv 69978 Israel &  &  &  &  &  & Ann Arbor, MI 48109-1043 U.S.A\tabularnewline
Email: shiri@post.tau.ac.il &  &  &  &  &  & Email: bslomka@umich.edu\tabularnewline
 &  &  &  &  &  & \tabularnewline
\end{tabular}
\par\end{center}
\begin{abstract}
The fundamental theorem of affine geometry is a classical and useful
result. For finite-dimensional real vector spaces, the theorem roughly
states that a bijective self-mapping which maps lines to lines is
affine. In this note we prove several generalizations of this result
and of its classical projective counterpart. We show that under a
significant geometric relaxation of the hypotheses, namely that only
lines parallel to one of a fixed set of finitely many directions are
mapped to lines, an injective mapping of the space must be of a very
restricted polynomial form. We also prove that under mild additional
conditions the mapping is forced to be affine-additive or affine-linear.
For example, we show that five directions in three dimensional real
space suffice to conclude affine-additivity. In the projective setting,
we show that $n+2$ fixed projective points in real $n$-dimensional
projective space , through which all projective lines that pass are
mapped to projective lines, suffice to conclude projective-linearity.
\end{abstract}
\textbf{2010 Mathematics Subject Classification}: 14R10, 51A05, 51A15.

\noindent \textbf{Keywords: }fundamental theorem, collineations, affine-additive
maps.

\newpage{}\tableofcontents{}

\section{Introduction}

\subsection{Overview}

Additive, linear, and affine maps play a prominent role in mathematics.
One of the basic theorems concerning affine maps is the so-called
``fundamental theorem of affine geometry'' which roughly states
that if a bijective map $F:\R^{n}\to\R^{n}$ maps any line to a line,
then it must be an affine transformation, namely of the form $x\mapsto Ax+b$
where $b\in\R^{n}$ is some fixed vector and $A\in GL_{n}(\R)$ is
an invertible linear map. Its projective counterpart, which is called
the ``fundamental theorem of projective geometry'', states that
a map $F:\RP^{n}\to\RP^{n}$ which maps any projective line to a projective
line, must be a projective linear transformation.

These statements have been generalized and strengthened in numerous
ways, and we present various precise formulations, together with references
and other historical remarks, in Section \ref{sec:history}.

While most generalizations regard relaxing the bijectivity conditions,
replacing assumptions on lines with collinearity preservation, or
showing that the proofs can be adjusted so that they work over fields
other than $\R$, in this paper we will be interested in a geometric
relaxation. Instead of assuming that \textit{all} lines are mapped
to lines, one can consider some sub-family of lines, and demand only
that lines in this sub-family are mapped onto (or into) lines.

We show that indeed, in the projective setting, it suffices to assume
the condition line-to-line for a subfamily of lines consisting of
all lines passing through some fixed $n+2$ generic projective points.
This is formulated in Theorem \ref{thm:Pr+2} below. Another interesting
case is when the points are not generic, and $n+1$ of them lie on
a hyperplane. This corresponds in a sense to a case which comes up
in the affine setting, in which all parallel lines in $n+1$ fixed
generic directions are mapped to parallel lines. This (affine) result
was exhibited in \cite{ArtsteinSlomka2011}.

The general situation in affine geometry is somewhat different, since
in a sense (to be explained below) parallelism is lost, and one may
find examples of families of lines in $n+1$ directions in $\R^{n}$
all mapped to lines in a non-linear manner, see Example \ref{exm:R3}.
However, we show that already with $n$ generic directions in which
lines are being mapped to lines, the mapping must be of a very restricted
polynomial form. This result is given in Theorem \ref{thm:poly-form}.
In Theorem \ref{thm:Poly_n+1} we analyze the further restrictions
on such maps, arising from an additional $\left(n+1\right)^{{\rm th}}$
direction in which lines are mapped to lines. As a consequence, we
describe necessary conditions together with several examples in which
such maps are forced to be affine-additive. In particular, for $n=3$
we prove that five directions, each three of which are linearly independent,
suffice to conclude affine-additivity. This is given as Theorem \ref{thm:NFTAG3D}.
Additional interesting cases for general dimensions are given in Section
\ref{sub:affine-additive}.

We remark that there is no continuity assumption in any of the main
results of this paper. However, adding such an assumption allows one
to deduce affine-linearity instead of affine-additivity. In Section
\ref{sec:continuousFTAGproof} we discuss this, and other interesting
consequences of a continuity assumption.

Our results may be applied, for example, in order to simplify the
proofs of several known results, such as Alexandrov's characterization
of Lorentz transformations \cite{Ale67}, as well as Pfeffer's generalization
\cite{Pfef81} for higher dimensions. These, and further applications
will be discussed elsewhere.

\subsection{Notations\label{sec:Notations}}

To formally state our results, it will be useful to introduce some
notation. Throughout this note, $\{e_{1},e_{2},...,e_{n}\}$ will
denote the standard orthonormal basis of $\R^{n}$, and $v=e_{1}+\dots+e_{n}$
will denote their vector sum. By a line in $\R^{n}$ we will mean
a translation of a one-dimensional subspace, so it can be written
as $a+\R b$ where $a,b\in\R^{n}$ are fixed vectors, in which case
we shall say that the line is parallel to $b$ or in direction $b$.
We denote the family of all lines parallel to some vector of a given
set $v_{1},...,v_{k}\in\R^{n}$ by $\L(v_{1},...,v_{k})$. Finally,
we call $k$ vectors $j$-independent if each $j$ of them are linearly
independent, and when a set of vectors is $n$-independent we sometimes
say that they are in general position, or generic.

Denote the projective real $n$-space by $\RP^{n}$. We shall signify
a projective point $\bar{p}\in\RP^{n}$ by a bar mark. Often, the
point $\bar{p}$ will correspond to one of its lifts $p\in\R^{n+1}$
according to the standard projection $\R^{n+1}\setminus\left\{ 0\right\} \to\RP^{n}$.
Given projective points $\bar{p_{1}},\bar{p_{2}},...,\bar{p_{k}}\in\RP^{n}$,
denote their projective span by $\Psp\{\bar{p_{1}},\bar{p_{2}},...,\bar{p_{k}}\}$,
that is, the projective subspace of least dimension, containing these
point. In particular, we denote the $n+1$ projective points in $\R P^{n}$
corresponding to the lines passing through the standard basis $e_{1},...,e_{n+1}$
of $\R^{n+1}$ by $\bar{e}_{1},...,\bar{e}_{n+1}$. We say that projective
points $\bar{a}_{1},\bar{a}_{2},...,\bar{a}_{m}$ in $\RP^{n}$ are
in general position (or generic) if each $k\leq n+1$ of (any of)
their corresponding lifts $a_{1},a_{2},...,a_{m}\in\R^{n+1}$ are
linearly independent.

\subsection{Main results}

\subsubsection{Affine setting}

The following theorem was shown (in a slightly more general setting
of cones) in \cite{ArtsteinSlomka2011}. It roughly states that if
parallel lines in $n+1$ generic directions are mapped to parallel
lines, then the mapping is affine-additive.
\begin{thm}
\label{thm:newFTAG.Parallelism-n+1dir} Let $m\ge n\geq2$. Let $v_{1},v_{2},...,v_{n},v_{n+1}\in\R^{n}$
be $n$-independent and let $F:\R^{n}\rightarrow\R^{m}$ be an injection
that maps each line in $\L(v_{1},v_{2},...,v_{n},v_{n+1})$ onto a
line. Assume that parallel lines in this family are mapped onto parallel
lines. Then $F$ is affine-additive. Moreover, there exists two sets
of linearly independent vectors, $u_{1},\dots,u_{n}\in\R^{n}$ and
$w_{1},\dots,w_{n}\in\R^{m}$, and an additive bijective function
$f:\R\rightarrow\R$ with $f\left(1\right)=1$, such that for every
$x=\sum_{i=1}^{n}\alpha_{i}u_{i}$, 
\[
F(x)-F\left(0\right)=\sum_{i=1}^{n}f(\alpha_{i})w_{i}.
\]

\end{thm}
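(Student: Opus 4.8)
The plan is to set up a coordinate framework adapted to the directions $v_1,\dots,v_{n+1}$ and then exploit the parallelism hypothesis to show that $F$ decouples into a sum of one-dimensional additive maps. First I would use the $n$-independence to select $n$ of the directions, say $v_1,\dots,v_n$, forming a basis of $\R^n$; after an affine change of coordinates in the domain I may assume these are the standard directions $e_1,\dots,e_n$, and by translating I may assume $F(0)=0$. The key geometric observation is that the hypothesis ``parallel lines map to parallel lines'' assigns to each direction $e_i$ a well-defined image direction $w_i\in\R^m$: every line parallel to $e_i$ is sent to a line, and all such images are mutually parallel, hence share a common direction vector $w_i$.

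The crux of the argument will be to promote this directional data into an additive decomposition of $F$. I would restrict $F$ to the coordinate axis $\R e_i$. Since this is a line in the family $\L$, its image is a line through $F(0)=0$ in direction $w_i$, so there is a function $f_i:\R\to\R$ with $F(\alpha e_i)=f_i(\alpha)w_i$. To prove additivity and linear independence, the plan is to run a parallelogram/grid argument: consider the line through $\alpha e_i$ parallel to $e_j$ and the line through $\beta e_j$ parallel to $e_i$; both are in $\L$, their images are lines in directions $w_j$ and $w_i$ respectively, and tracking their intersection shows that $F(\alpha e_i + \beta e_j) = f_i(\alpha)w_i + f_j(\beta)w_j$. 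Iterating this across all $n$ coordinate axes yields
\[
F\Bigl(\sum_{i=1}^n \alpha_i e_i\Bigr)=\sum_{i=1}^n f_i(\alpha_i)w_i,
\]
and the injectivity of $F$ forces $w_1,\dots,w_n$ to be linearly independent.

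It remains to show that all the $f_i$ coincide with a single additive function $f$ and that $f$ is additive with $f(1)=1$. Here is where the $(n+1)^{\text{st}}$ direction $v_{n+1}$ enters: writing $v_{n+1}=\sum_i c_i e_i$ with all $c_i\neq 0$ (by $n$-independence), the images of lines parallel to $v_{n+1}$ are constrained to be parallel, and feeding the decomposition above into this parallelism condition produces functional equations relating the $f_i$ to one another. The plan is to extract from these equations that, after rescaling the basis vectors $u_i$ and $w_i$ appropriately, each $f_i$ is the same additive Cauchy function $f$ with $f(1)=1$; the normalization $f(1)=1$ comes from choosing the $u_i$ so that the point $v$ (or a suitable diagonal point) is mapped correctly.

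The main obstacle I expect is precisely this last synthesis step: extracting additivity of a single $f$ from the parallelism in direction $v_{n+1}$. The parallelism condition gives an equation of the form ``the image direction of the line $\{t v_{n+1}+ p\}$ is independent of the basepoint $p$,'' which unwinds into a relation like $f_i(\alpha_i + c_i s)-f_i(\alpha_i)$ being proportional, with $p$-independent ratios, to the analogous increments in the other coordinates. Disentangling these simultaneous Cauchy-type equations to conclude that a common additive $f$ exists (rather than merely $n$ separately additive functions with compatible scalings) is the delicate part, and it is exactly here that the genericity of all $n+1$ directions is used in an essential way; a mere $n$ directions would leave the $f_i$ unrelated, consistent with the non-additive examples alluded to in Example \ref{exm:R3}.
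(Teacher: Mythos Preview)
Your plan is correct and matches the paper's proof almost exactly: reduce to $v_i=e_i$ and $F(0)=0$, use the parallelism hypothesis to get the diagonal form $F(\sum_i \alpha_i e_i)=\sum_i f_i(\alpha_i)w_i$ (the paper packages your parallelogram/grid argument as Theorem~\ref{thm:newFTAG.Parallelism}, proved via Lemma~\ref{lem:parallelism}), and then use the $(n{+}1)^{\rm st}$ direction to force the $f_i$ to be a single additive $f$.

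The step you flag as the obstacle is handled more cleanly in the paper than your sketch suggests, and it is worth noting the trick. Rather than working with a general $v_{n+1}=\sum_i c_i e_i$ and juggling simultaneous Cauchy-type relations, the paper first rescales the domain basis so that $v_{n+1}=v:=e_1+\cdots+e_n$ (possible since all $c_i\neq 0$), and further normalizes on the target side so that $F(v)=v$. Then the argument splits into two one-line observations: (i) the single line $\sp\{v\}$ is mapped onto itself, so $F(tv)=(f_1(t),\dots,f_n(t))$ lies on the diagonal, giving $f_1=\cdots=f_n=:f$ immediately; (ii) for any $a,b$, the line through $be_1$ parallel to $v$ is mapped to a line parallel to $v$, so $F(av+be_1)-F(be_1)\in\sp\{v\}$, which reads $(f(a+b)-f(b),f(a),\dots,f(a))\in\sp\{v\}$ and yields $f(a+b)=f(a)+f(b)$. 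This separation---first equality of the $f_i$ via the diagonal line, then additivity via its translates---avoids the ``disentangling'' you anticipated. (A small aside: your pointer to Example~\ref{exm:R3} is slightly off, since that example concerns the \emph{non}-parallelism setting; the correct comparison for ``only $n$ directions with parallelism'' is Theorem~\ref{thm:newFTAG.Parallelism}, where the $f_i$ are indeed unrelated and need not be additive.)
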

Let us remark about the main assumption in Theorem \ref{thm:newFTAG.Parallelism-n+1dir}
and the other results that follow. We mainly deal with injective mappings
that map some lines \textit{onto} lines. In some cases, we conclude
that the mapping is affine-additive. If continuity and surjectivity
assumptions are added to any of the statements in this section, then
the condition that lines are mapped \textit{onto} lines may be relaxed
and replaced by the condition that lines are mapped \textit{into}
lines (often referred to as ``collinearity''). This is due to Proposition
\ref{prop:Continuity_affine}. Furthermore, continuity and affine-additivity
implies affine-linearity. 

The proof of the above theorem relies upon the following theorem,
which states that if the parallelism condition is assumed for $n$
linearly independent directions, then the mapping must be of a diagonal
form.
\begin{thm}
\label{thm:newFTAG.Parallelism} Let $m\ge n\geq2$. Let $v_{1},v_{2},...,v_{n}\in\R^{n}$
be linearly independent and let $F:\R^{n}\rightarrow\R^{m}$ be an
injection that maps each line in $\L(v_{1},v_{2},...,v_{n})$ onto
a line, and moreover, that parallel lines in this family are mapped
onto parallel lines. Then, there exist two sets of linearly independent
vectors, $u_{1},\dots,u_{n}\in\R^{n}$ and $w_{1},\dots,w_{n}\in\R^{m}$,
and bijective functions $f_{1},...,f_{n}:\R\rightarrow\R$ with $f_{i}\left(0\right)=0$,
$f_{i}\left(1\right)=1$, such that for every  $x=\sum_{i=1}^{n}\alpha_{i}u_{i}$,
\[
F(x)-F\left(0\right)=\sum_{i=1}^{n}f_{i}(\alpha_{i})w_{i}.
\]
 In fact, one may choose $u_{i}=v_{i}$ for $i=1,\dots,n$.
\end{thm}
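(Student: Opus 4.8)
The plan is to take the given basis $v_1,\dots,v_n$ itself as the vectors $u_i$ and to reconstruct $F$ from its behaviour on the coordinate grid this basis determines. First I would reduce to $F(0)=0$ by replacing $F$ with $G=F-F(0)$, which is still an injection mapping each line of $\L(v_1,\dots,v_n)$ onto a line with parallelism preserved. Writing every point uniquely as $x=\sum_i\alpha_i v_i$, the lines parallel to $v_i$ are precisely the coordinate lines in the $i$-th direction, and since parallel lines have parallel images, all of them share one image direction. I set $w_i:=G(v_i)$, which is nonzero by injectivity since $G(0)=0$, so that every $v_i$-line has image direction $\R w_i$. Restricting $G$ to the $i$-th axis $\{t v_i\}$, which passes through $0$ and $v_i$, gives a map onto the line $\R w_i$; writing $G(tv_i)=f_i(t)w_i$ defines $f_i\colon\R\to\R$, and injectivity together with the \emph{onto}-a-line hypothesis makes each $f_i$ a bijection with $f_i(0)=0$ and $f_i(1)=1$.

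Next I would establish pairwise independence of the $w_i$, the one genuinely geometric input. Fix $i\ne j$ and suppose $w_j\in\R w_i$. The $v_i$-axis maps onto $\R w_i$, while the parallel $v_i$-line through $v_j$ maps onto the line through $G(v_j)=w_j$ in direction $w_i$, which is again $\R w_i$; these two disjoint domain lines both surject onto $\R w_i$, so two distinct points acquire the same image, contradicting injectivity. Hence $w_i,w_j$ are independent for all $i\ne j$. With this I would prove $G(\sum_i\alpha_i v_i)=\sum_i f_i(\alpha_i)w_i$ by induction on the number of nonzero coordinates of $x$, the base case of at most one nonzero coordinate being the definition of $f_i$. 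For the inductive step, take $x$ whose nonzero coordinates are indexed by a set $S$ with $|S|\ge 2$ and fix $i_1\ne i_2$ in $S$. Zeroing the $i_1$-th coordinate yields a point $x'$ with fewer nonzero coordinates, on which the formula is known, and $x$ lies on the $v_{i_1}$-line through $x'$, so $G(x)\in G(x')+\R w_{i_1}$; doing the same with $i_2$ gives $G(x)\in G(x'')+\R w_{i_2}$. Writing out both memberships with the inductive hypothesis, the contributions of the directions other than $i_1,i_2$ coincide, so subtracting leaves a relation between $w_{i_1}$ and $w_{i_2}$ alone, and their pairwise independence pins the two unknown coefficients down to $f_{i_1}(\alpha_{i_1})$ and $f_{i_2}(\alpha_{i_2})$, yielding the desired formula.

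Finally I would upgrade pairwise to full linear independence of $w_1,\dots,w_n$, which is the step I expect to need the most care, since pairwise independence does not give it directly. I would argue by contradiction from the formula and injectivity: if $\sum_i c_i w_i=0$ with some $c_{i_0}\ne 0$, then by surjectivity of each $f_i$ I can pick coordinates $\alpha_i$ with $f_i(\alpha_i)=c_i$, giving a point $\alpha$ with $G(\alpha)=\sum_i c_i w_i=0=G(0)$; but $\alpha\ne 0$ because its $i_0$-th coordinate is nonzero (as $f_{i_0}(\alpha_{i_0})=c_{i_0}\ne 0=f_{i_0}(0)$), contradicting injectivity. Thus $w_1,\dots,w_n$ are independent, and choosing $u_i=v_i$ completes the proof. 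The recurring delicate point is that it is the \emph{onto} hypothesis, not mere collinearity, that powers both the bijectivity of the $f_i$ and the independence arguments.
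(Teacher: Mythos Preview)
Your argument is correct, and it differs from the paper's in an interesting way. The paper first proves a structural lemma (Lemma~\ref{lem:parallelism}) establishing by induction on $k$ that $F(v_1),\dots,F(v_k)$ are linearly independent and that $F$ maps the span $\sp\{v_1,\dots,v_k\}$ onto $\sp\{F(v_1),\dots,F(v_k)\}$ (and similarly for the affine translates). With full independence in hand from the outset, the paper normalizes so that $F(e_i)=e_i$ and reads off each coordinate of $F(x)$ separately: $F(x)\in f_i(x_i)e_i+\sp_{j\neq i}\{e_j\}$ forces $(F(x))_i=f_i(x_i)$.

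You instead prove only \emph{pairwise} independence of the $w_i$ directly from injectivity and the onto hypothesis, then run an induction on the number of nonzero coordinates of $x$, intersecting the two affine lines $G(x')+\R w_{i_1}$ and $G(x'')+\R w_{i_2}$; pairwise independence alone pins down the intersection. Full independence is recovered only at the end, from the formula and injectivity. This is a genuinely different decomposition: the paper front-loads the independence and span-preservation into a reusable lemma (indeed, its projective analogue, Proposition~\ref{prop:PrParll}, is invoked later for Theorems~\ref{thm:Pr+2}--\ref{thm:Pr+1}), whereas your route is more self-contained and economical for this single statement, needing no auxiliary lemma about images of subspaces. Both are clean; yours is slightly more elementary, the paper's slightly more modular.
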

\noindent In this paper we investigate the case in which a parallelism
condition is \textit{not} assumed. However, since in the projective
case we shall use Theorems \ref{thm:newFTAG.Parallelism-n+1dir}-\ref{thm:newFTAG.Parallelism},
we provide their proofs in Appendix \ref{sec:paral}. 

Our first result is that when all lines in $n$ linearly independent
directions are mapped onto lines, the mapping must be of a very restricted
polynomial form:
\begin{thm}
\label{thm:poly-form} Let $m,n\geq2$. Let $v_{1},\dots,v_{n}\in\R^{n}$
be linearly independent and let $F:\R^{n}\to\R^{m}$ be an injection
that maps each line in $\L(v_{1},v_{2},...,v_{n})$ onto a line. Then
there exists a basis $u_{1},\dots,u_{n}$ in $\R^{n}$ such that for
every $x=\sum_{i=1}^{n}\alpha_{i}u_{i}$, 
\begin{equation}
F\left(x\right)=\sum_{\delta\in\{0,1\}^{n}}u_{\delta}\prod_{i=1}^{n}f_{i}^{\delta_{i}}(\alpha_{i})\label{eq:n-web-rep}
\end{equation}
where $u_{\delta}\in\R^{m}$, $\delta\in\{0,1\}^{n}$ and $f_{1},f_{2},...,f_{n}:\R\rightarrow\R$
are bijective with $f_{i}(0)=0$ and $f_{i}(1)=1$ for $i=1,...,n$.
Moreover, $m\ge n$, and if $m=n$ then $F$ is a bijection. 
\end{thm}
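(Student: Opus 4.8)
The plan is to set up coordinates adapted to the directions $v_1,\dots,v_n$ and then build the polynomial representation one direction at a time. First I would choose $u_i = v_i$ as the basis and write every point as $x = \sum_{i=1}^n \alpha_i u_i$, so that a line in direction $v_i$ through a point is obtained by varying only the coordinate $\alpha_i$ while freezing the others. The core structural fact to exploit is that along any such line $F$ is an affine parametrization of a line in $\R^m$: restricting $F$ to the $i$-th coordinate axis (with the other coordinates fixed at values $\beta_j$, $j \neq i$) gives a map $t \mapsto F$ that traces out a line, hence can be written as $A(\beta) + g(t)\, B(\beta)$ for some base point $A(\beta)$, direction vector $B(\beta)$, and scalar reparametrization $g$ depending on the frozen data.

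The key step I expect is an induction on the number of coordinates that are ``active.'' I would first analyze the restriction of $F$ to each two-dimensional coordinate plane, where the situation reduces to a web/net of lines in two families, $\L(v_i,v_j)$, mapped to lines. The classical fact here (a degenerate Reidemeister/Thomsen-type closure, or equivalently the functional equation governing a planar $3$-web) forces the bilinear-in-the-$f$-coordinates structure $F = u_{00} + f_i u_{10} + f_j u_{01} + f_i f_j u_{11}$ on each such plane, where the single-variable functions $f_i$ are bijections normalized by $f_i(0)=0$, $f_i(1)=1$ after an affine change of the range. The crucial consistency observation is that the function $f_i$ attached to direction $i$ must be the \emph{same} on every coordinate plane containing the $i$-th axis, because two such restrictions share the full $i$-th axis and the reparametrization of a line is determined up to affine maps by its values; normalizing at two points $\alpha_i = 0,1$ pins $f_i$ down globally and independently of the frozen coordinates.

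From the planar case I would then propagate upward: assuming the desired multilinear-in-$(f_1,\dots,f_{k})$ form holds on every $k$-dimensional coordinate subspace, I would glue along lines in the $(k+1)$-st direction. Fixing all but the $(k+1)$-st coordinate and letting that coordinate vary produces a line, so $F(x)$ is affine in $f_{k+1}(\alpha_{k+1})$ with coefficients that are functions of the remaining coordinates; by the inductive hypothesis those coefficients are themselves multilinear in $f_1(\alpha_1),\dots,f_k(\alpha_k)$, and matching the two descriptions yields the claimed sum over $\delta \in \{0,1\}^n$ with coefficient vectors $u_\delta \in \R^m$. The main obstacle, and the place where real care is needed, is precisely this coherence argument: one must verify that the single-variable functions and the affine reparametrizations chosen on different slices are mutually compatible, so that the locally affine descriptions patch into one global formula rather than producing slice-dependent functions. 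The normalization $f_i(0)=f_i(1)$-fixing and the shared-axis rigidity are what make this work.

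Finally I would establish the dimension count and the bijectivity claim. Injectivity of $F$ together with the representation forces the vectors $\{u_{e_1},\dots,u_{e_n}\}$ (the coefficients of the degree-one monomials $f_i$) to be linearly independent: if they were dependent, one could find two distinct points with the same image along a suitable coordinate direction, contradicting injectivity on that line. This yields $m \ge n$. When $m = n$ these $n$ vectors form a basis of $\R^n$, and the higher monomials are then determined; I would argue that the map is onto by checking that, in the coordinates dual to this basis, each output coordinate is controlled by a bijection $f_i$, so surjectivity reduces to the surjectivity of the $f_i$, which is already part of the hypothesis. The reparametrizations being bijections of $\R$ is what converts the injective-into statement into a genuine bijection when the dimensions match.
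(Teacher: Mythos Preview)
Your inductive strategy for the representation \eqref{eq:n-web-rep} is essentially the paper's: reduce to $u_i=v_i$, use the $n=2$ case as base, and for the step fix one coordinate, apply the hypothesis to the remaining $(n-1)$-variable map, then argue that the one-variable bijections $f_i$ do not depend on the frozen coordinate and that the vector coefficients depend affinely on $f_j$ of the frozen coordinate. You have correctly identified the coherence issue as the crux, and your normalization-at-two-points argument is the right mechanism. So far, so good.

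The genuine gap is in your last paragraph, the ``$m\ge n$ and bijectivity'' claim. Your proposed argument does not work. First, you assert that injectivity forces $u_{e_1},\dots,u_{e_n}$ to be linearly independent; from the planar case you only get \emph{pairwise} independence of $u_{e_i},u_{e_j}$, which for $n\ge 3$ does not imply joint independence (e.g.\ $e_1,e_2,e_1+e_2$). You give no mechanism ruling out a dependence compensated by the higher-order $u_\delta$'s. Second, your surjectivity sketch for $m=n$ is simply false: even if $u_{e_1},\dots,u_{e_n}$ happened to be a basis, the higher monomials $u_\delta$ with $|\delta|\ge 2$ need not vanish, so it is not true that ``each output coordinate is controlled by a bijection $f_i$''. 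Example~\ref{exm:R3} already shows a bijection with nonzero quadratic part; its first output coordinate is $x_1+x_3(x_1-x_2)$, not $f_1(x_1)$.

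The paper handles this part by a different, nontrivial input: it passes to the genuine polynomial map $\widetilde F(x_1,\dots,x_n)=F(f_1^{-1}(x_1),\dots,f_n^{-1}(x_n))$ and invokes the Bia\l ynicki-Birula--Rosenlicht theorem that an injective polynomial map $\R^n\to\R^n$ is automatically surjective. This immediately yields bijectivity when $m=n$, and the same result rules out $m<n$ (restricting to $x_n=0$ would give an injective polynomial $\R^{n-1}\to\R^{n-1}$, hence surjective, contradicting injectivity of the full map). You should replace your final paragraph with this argument; the elementary route you sketch does not go through.
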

Adding one more direction in general position, in which lines are
mapped onto lines, yields further significant restrictions on the
polynomial form which is already implied by Theorem \ref{thm:poly-form}.
In order to state the theorem, we need to introduce further notation;
for any $\delta\in\{0,1\}^{n}$ denote $|\delta|=\sum_{i=1}^{n}\delta_{i}$.
\begin{thm}
\label{thm:Poly_n+1}Let $m,n\geq2$. Let $v_{1},\dots v_{n+1}\in\R^{n}$
be in general position. Let $F:\R^{n}\to\R^{m}$ be an injective mapping
that maps each line in $\mathcal{L}\left(v_{1},\dots,v_{n+1}\right)$
onto a line. Then there exists a basis $u_{1},\dots,u_{n}$ in $\R^{n}$
such that for every $x=\sum_{i=1}^{n}\alpha_{i}u_{i}$, 
\begin{align}
F\left(x\right)=\sum_{\delta\in\{0,1\}^{n}}u_{\delta}\prod_{i=1}^{n}f_{i}^{\delta_{i}}(\alpha_{i})\label{eq:poly_n+1_form}
\end{align}
where $f_{1},f_{2},...,f_{n}:\R\rightarrow\R$ are additive bijections
with $f_{i}(1)=1$ for $i=1,...,n$, and $u_{\delta}\in\R^{m}$ satisfy
the following conditions: 
\begin{itemize}
\item $u_{\delta}=0$ for all $\delta$ with $|\delta|\ge\frac{n+2}{2}$,
and 
\item for each $2\le k<\frac{n+2}{2}$ and every $0\le l\le k-2$ indices
$1\le i_{1}<\cdots<i_{l}\le n$, 
\[
\sum_{\substack{|\delta|=k,\\
\delta_{i_{1}}=\cdots=\delta_{i_{l}}=1
}
}u_{\delta}=0.
\]

\end{itemize}
\noindent Moreover $m\ge n$, and if $m=n$ then $F$ is a bijection.
Conversely, any mapping $F$ of the form as in the right hand side
of \eqref{eq:poly_n+1_form}, which satisfy the given conditions on
the coefficients $u_{\delta}$, takes each line in $\L(e_{1},...,e_{n},v)$,
where $v=\sum_{i=1}^{n}e_{i}$,  onto a line.\end{thm}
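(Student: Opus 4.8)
The plan is to read Theorem~\ref{thm:Poly_n+1} as a rigidification of Theorem~\ref{thm:poly-form}, after first normalising the configuration of directions. Because $v_1,\dots,v_{n+1}$ are in general position, there is a linear automorphism of $\R^n$ carrying the direction of each $v_i$ to $e_i$ $(i\le n)$ and the direction of $v_{n+1}$ to $v=\sum_i e_i$; composing $F$ with it changes neither injectivity nor the line-to-line hypothesis, and the conclusion is invariant in form under this change (the coefficient vectors $u_\delta\in\R^m$ and the functions $f_i$ transport unchanged, up to relabelling the domain basis). So I would assume outright that $v_i=e_i$ and $v_{n+1}=v$. Applying Theorem~\ref{thm:poly-form} to the independent directions $e_1,\dots,e_n$ then supplies bijections $f_i$ with $f_i(0)=0,\ f_i(1)=1$ and the representation $F(x)=\sum_\delta u_\delta\prod_i f_i^{\delta_i}(\alpha_i)$, together with $m\ge n$, bijectivity when $m=n$, and linear independence of the degree-one coefficients $w_i$ (the source of $m\ge n$). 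All that remains is to exploit the single remaining hypothesis: for every $\beta\in\R^n$ the curve $H_\beta(t)=\sum_\delta u_\delta\prod_{i\in\mathrm{supp}\,\delta}f_i(\beta_i+t)$ traces a line in $\R^m$.

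First I would prove that the $f_i$ are additive and in fact all equal to one function $f$. The degree-one part of $H_\beta$ lives in $\sp\{w_1,\dots,w_n\}$, so passing to the quotient of $\R^m$ by $\sp\{u_\delta:|\delta|\ge2\}$ kills the higher terms and leaves a diagonal map $\bar F=\bar u_0+\sum_i\bar w_i f_i(\alpha_i)$ which still sends the $v$-lines to lines. Requiring the curve $t\mapsto(f_1(\beta_1+t),\dots,f_n(\beta_n+t))$ to lie on a line for every base point is a Pexider/Cauchy system for the $f_i$; as in the case $n=2$, substituting $t=0$ and symmetrising reduces it to $f_i(u+s)=\kappa(s)f_i(u)+f_i(s)$ with a multiplier $h=1+cf_i$ obeying $h(u+s)=h(u)h(s)$, and since a multiplicative self-map of $\R$ is everywhere nonnegative it cannot be the bijection $h$ — this eliminates the exponential branch and forces additivity, while the common shift (here all $c_i=1$) together with $f_i(1)=1$ forces the $f_i$ to coincide. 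This is the step I expect to be the main obstacle: for $n\ge3$ no two-dimensional coordinate section contains three of the directions $v_1,\dots,v_{n+1}$ (any three are independent), so, unlike the parallel settings of Theorems~\ref{thm:newFTAG.Parallelism-n+1dir}--\ref{thm:newFTAG.Parallelism}, the Cauchy equation cannot be localised plane-by-plane and must be disentangled from the higher coefficients $u_\delta$ simultaneously; controlling this, and justifying that the quotiented $\bar w_i$ remain independent, is the delicate point.

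With $f_1=\dots=f_n=f$ in hand, write $b_i=f(\beta_i)$ and expand each factor $f(\beta_i+t)=b_i+f(t)$, collecting by powers of $f(t)$:
\[
H_\beta(t)=\sum_{r\ge0}f(t)^r\,W_r(b),\qquad
W_r(b)=\sum_{|\epsilon|=r}\ \sum_{\delta\supseteq\epsilon}u_\delta\prod_{i\in\mathrm{supp}\,\delta\setminus\mathrm{supp}\,\epsilon}b_i .
\]
Since $f$ is onto $\R$, the powers $1,f,f^2,\dots$ are linearly independent, so the line condition holding for \emph{every} $\beta$ is equivalent to $W_r(b)\equiv0$ for all $r\ge2$ (the freedom in $\beta$ upgrades the a priori conclusion ``$W_r\parallel W_1$'' to the identity ``$W_r\equiv0$''). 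Reading off the coefficient of each squarefree monomial $\prod_{i\in T}b_i$ turns $W_r\equiv0$ into $\sum_{|\delta|=k,\ \delta_i=1\,\forall i\in T}u_\delta=0$ for every $k\ge r\ge2$ and every $T$ with $|T|=k-r$; writing $l=|T|=k-r\le k-2$ reproduces exactly the partial-sum relations in the statement. Finally I would read these as set-inclusion (incidence) equations on the degree-$k$ coefficients: the classical nonsingularity of the square inclusion matrix $W_{n-k,k}$ means that the single level $l=n-k$ already forces all degree-$k$ coefficients to vanish; this level is present precisely when $n-k\le k-2$, i.e. $k\ge\tfrac{n+2}{2}$, yielding the first bullet, whereas for $2\le k<\tfrac{n+2}{2}$ the decisive level $n-k$ is absent and only the listed partial sums survive, giving the second bullet.

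The converse is this computation run backwards: assuming the two bullet conditions, the same coefficient bookkeeping gives $W_r\equiv0$ for $r\ge2$, so along $v$ one has $H_\beta(t)=W_0(b)+f(t)\,W_1(b)$, a line, while the directions $e_1,\dots,e_n$ are sent to lines automatically because $F$ is affine in each $f_i(\alpha_i)$. The remaining assertions $m\ge n$ and ``$m=n\Rightarrow F$ bijective'' are inherited verbatim from Theorem~\ref{thm:poly-form}.
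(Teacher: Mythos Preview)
Your outline has the right shape (normalise, apply Theorem~\ref{thm:poly-form}, then squeeze the $v$-direction), but two of the steps you flag as routine are not, and your workaround for them diverges from the paper in a way that leaves real gaps.

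\textbf{Additivity.} Your plan is to quotient $\R^m$ by $\sp\{u_\delta:|\delta|\ge 2\}$, obtain a diagonal map, and run a Cauchy/Pexider argument. You yourself call the independence of the $\bar w_i$ in the quotient ``the delicate point'' and do not resolve it; without that independence the line condition on $t\mapsto\sum_i\bar w_i f_i(\beta_i+t)$ is vacuous and no functional equation is produced. The paper does something different and avoids the quotient entirely: it applies Theorem~\ref{thm:poly-form} a \emph{second} time, to $F\circ A$ where $A$ fixes $e_1,\dots,e_{n-1}$ and sends $e_n$ to $v$. Comparing the two polynomial representations at points with all but two coordinates zero isolates a relation of the form $[g_1(x_1-x_n)-g_1(-x_n)]/g_1(x_1)=G(x_n)$, and Lemma~\ref{lem:Scalar-bijection-additive} then forces additivity. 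No quotient, no independence issue. Also, note that the theorem does \emph{not} assert $f_1=\cdots=f_n$; your claim that they coincide is extraneous (and unproved), though the paper sidesteps it by passing to $\Q$, where additivity plus $f_i(1)=1$ makes all $f_i$ the identity.

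\textbf{The coefficient relations.} Your central claim is that the $v$-line condition forces $W_r(b)\equiv 0$ for all $r\ge 2$. What the line condition actually gives is only that all $W_r(b)$, $r\ge 1$, lie in a common one-dimensional subspace $\sp\{d(b)\}$ for each $b$. This does \emph{not} upgrade to $W_r\equiv 0$: already for $n=2$, take $u_{(1,1)}\neq 0$ parallel to $w_1+w_2$; then $W_2=u_{(1,1)}$ and $W_1(b)=w_1+w_2+(b_1+b_2)u_{(1,1)}$ are parallel for every $b$, and $H_\beta$ still traces a (half-)line. One must invoke ``onto'' or injectivity to kill this, and in higher dimensions (top degree odd) even ``onto'' does not immediately help. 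The paper again uses the second polynomial form: equating the two representations over $\Q$ and substituting $x_{k-1}=\cdots=x_n$ makes the right-hand side have degree $\le k-1$, so the coefficient of $x_1\cdots x_{k-2}x_n^{2}$ on the left must vanish, giving $\sum_{|\delta|=k,\ \delta_{i_1}=\cdots=\delta_{i_{k-2}}=1}u_\delta=0$. An averaging/inclusion--exclusion argument then yields the remaining bullets and the vanishing for $|\delta|\ge (n+2)/2$. In short, the missing idea in your proposal is the second application of Theorem~\ref{thm:poly-form} in the basis $\{e_1,\dots,e_{n-1},v\}$; comparing the two polynomial forms is what produces both the additivity functional equation and the coefficient identities, and it is not replaceable by the bare $v$-line condition.
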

\begin{rem}
\label{rem:compose-A}The basis $u_{1},\dots,u_{n}$ which appears
in the statement of Theorem \ref{thm:Poly_n+1} is the one satisfying
$u_{i}=\lambda_{i}v_{i}$, where $\lambda_{i}\in\R$ for which $\sum u_{i}=v_{n+1}$.
 Similarly, the basis which appears in Theorem \ref{thm:newFTAG.Parallelism}
or in Theorem \ref{thm:poly-form} is simply $u_{i}=v_{i}$ for each
$i$.
\end{rem}
Theorem \ref{thm:Poly_n+1} is sharp in the sense that one may construct
an injective polynomial map of degree $\left\lceil \frac{n-1}{2}\right\rceil $
which satisfies the assumptions of the theorem. In particular, the
bound $\left\lceil \frac{n}{2}\right\rceil $ on the degree of the
given polynomial form is optimal for even dimensions $n$. This fact
is explained in Example \ref{rem:n+1:Sharpness}.\\

Theorem \ref{thm:Poly_n+1} may be used to derive various generalizations
of the classical fundamental theorem of affine geometry, where collinearity
preservation is assumed only for a finite number of directions of
lines. For the three dimensional case we prove the following surprisingly
strong generalization. 
\begin{thm}
\label{thm:NFTAG3D} Let $v_{1},v_{2},\dots,v_{5}\in\R^{3}$ be $3$-independent.
Let $F:\R^{3}\to\R^{3}$ be an injective mapping that maps each line
in $\mathcal{L}\left(v_{1},\dots,v_{5}\right)$ onto a line. Then
$F$ is affine-additive. Moreover, there exist a basis $u_{1},u_{2},u_{3}\in\R^{3}$,
a basis $w_{1},w_{2},w_{3}\in\R^{3}$, and additive bijections $f_{1},f_{2},f_{3}:\R\to\R$
with $f_{i}\left(1\right)=1$, such that for every $x=\sum_{i=1}^{3}\alpha_{i}u_{i}$,
\[
F\left(x\right)-F\left(0\right)=\sum_{i=1}^{3}f_{i}\left(\alpha_{i}\right)w_{i}.
\]

\end{thm}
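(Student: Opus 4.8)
The plan is to derive the affine-additive conclusion in two stages: first reduce, via the results already proved, to a quadratic normal form, and then show that the fifth direction annihilates the whole quadratic part.

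First I would apply Theorem \ref{thm:Poly_n+1} to the four directions $v_{1},\dots,v_{4}$ (any four of five $3$-independent vectors are in general position). By Remark \ref{rem:compose-A} this gives a basis $u_{1},u_{2},u_{3}$ with $u_{i}=\lambda_{i}v_{i}$ and $u_{1}+u_{2}+u_{3}=v_{4}$, additive bijections $f_{1},f_{2},f_{3}$ with $f_{i}(1)=1$, and the representation \eqref{eq:poly_n+1_form}, in which $u_{111}=0$ and $u_{e_{1}+e_{2}}+u_{e_{1}+e_{3}}+u_{e_{2}+e_{3}}=0$. Writing $g_{i}=f_{i}(\alpha_{i})$, the content of the theorem is exactly that the three quadratic coefficients $u_{e_{i}+e_{j}}$ all vanish; the stated form then follows with $w_{i}=u_{e_{i}}$. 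Moreover injectivity forces $u_{e_{1}},u_{e_{2}},u_{e_{3}}$ to be linearly independent: if they were dependent, $F$ would restrict to $s\mapsto F(0)+s^{2}R$ along a suitable line, which is never injective.

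Next I bring in the fifth direction. Write $v_{5}=\sum c_{i}u_{i}$; a short determinant computation using $3$-independence shows the $c_{i}$ are nonzero and pairwise distinct. Parametrising a line $p+tv_{5}$ and setting $h_{i}(t)=f_{i}(c_{i}t)$ (additive bijections) and $\beta_{i}=f_{i}(p_{i})$, the key device is rational scaling: restricting $t$ to rational multiples $qt_{0}$ and using $h_{i}(qt_{0})=qh_{i}(t_{0})$ turns $F(p+tv_{5})-F(p)$ into a genuine polynomial $qL+q^{2}Q$ in $q\in\Q$, so that both $L$ and the $p$-independent vector $Q(t_{0})=\sum_{i<j}h_{i}(t_{0})h_{j}(t_{0})u_{e_{i}+e_{j}}$ lie in the direction $b(p)$ of the image line. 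Varying the base point $p$ then forces the quadratic coefficients to be pairwise parallel: either the directions $b(p)$ are non-constant, so that $Q\equiv0$ and polarisation over the $\Q$-span of the additive curve $t\mapsto(h_{1},h_{2},h_{3})$ kills the $u_{e_{i}+e_{j}}$; or the linear part yields that the vectors $M_{j}=\sum_{i\ne j}h_{i}u_{e_{i}+e_{j}}$ are all parallel to $Q(t_{0})$, and a brief computation (using $\sum u_{e_{i}+e_{j}}=0$, together with $h_{1}(t_{0})\ne0$ and the impossibility of $h_{1}=h_{2}=h_{3}$) forces the three coefficients into a common line $\R w$, say $u_{e_{i}+e_{j}}=\mu_{ij}w$.

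Once the quadratic part has the rank-one shape $q(g)w$ with $q(g)=\sum_{i<j}\mu_{ij}g_{i}g_{j}$ and $\sum\mu_{ij}=0$, I finish cleanly. Injectivity forces $w\in\ker Q$, where $Q$ is the zero-diagonal symmetric matrix of $q$; since $\det Q$ vanishes precisely when $\mu_{12}\mu_{13}\mu_{23}=0$, some coefficient vanishes, say $\mu_{23}=0$, whence $\sum\mu_{ij}=0$ gives $q(g)=\mu_{12}g_{1}(g_{2}-g_{3})$. On the other hand the fifth-direction analysis yields $q\bigl(h_{1}(t),h_{2}(t),h_{3}(t)\bigr)=0$ for all $t$; as $h_{1}(t)\ne0$ for $t\ne0$, this gives $\mu_{12}\bigl(h_{2}(t)-h_{3}(t)\bigr)=0$, and $\mu_{12}\ne0$ would force $f_{2}(c_{2}t)=f_{3}(c_{3}t)$, hence $c_{2}=c_{3}$ via $f_{i}(1)=1$ and injectivity, contradicting distinctness. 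Thus all $\mu_{ij}=0$, the quadratic part disappears, and $F(x)=F(0)+\sum_{i}f_{i}(\alpha_{i})w_{i}$ as claimed.

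The main obstacle is the middle stage. The $f_{i}$ may be highly irregular (non-measurable), so no continuity or degree argument is available, and the image-line direction $b(p)$ is allowed to drift with the base point; consequently the quadratic coefficients cannot be attacked directly. The whole argument therefore rests on extracting algebraic rigidity from the two robust features that survive the wildness of the $f_{i}$: the normalisation $f_{i}(1)=1$ together with the pairwise distinctness of the $c_{i}$, which repeatedly excludes coincidences of the form $f_{i}(c_{i}\cdot)=f_{j}(c_{j}\cdot)$, and the injectivity of $F$, which forces the quadratic form $q$ to be singular and hence reducible to the single product $g_{1}(g_{2}-g_{3})$ where the distinctness can be applied.
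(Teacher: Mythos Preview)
Your overall strategy is sound and genuinely different from the paper's. The paper proceeds by an explicit coordinate case analysis (splitting on whether $a_4=0$ and whether $a_4\parallel a_5$), reduces via injectivity alone to two concrete one-parameter families of quadratic maps (see Remark~\ref{rem:3D4directions}), and then rules each out using only the single line $\R v_5$ through the origin. You instead exploit the whole pencil of lines parallel to $v_5$, separate the linear and quadratic contributions by rational scaling, and bring injectivity in only at the end to force the rank-one quadratic form to be degenerate. The paper's route is sharper in that it shows one line in the fifth direction suffices; yours is more structural and avoids the ad hoc collision-finding.

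There is, however, a real gap in your Case~A. The claim that polarisation over the $\Q$-span of $t\mapsto(h_1(t),h_2(t),h_3(t))$ kills the $u_{e_i+e_j}$ is false: that $\Q$-span may well be one-dimensional (take $f_1=f_2=f_3=\mathrm{id}$, so the image is just the line $\R(c_1,c_2,c_3)$), and then $Q\equiv 0$ reduces to the single condition $\sum_{i<j} c_ic_j\, u_{e_i+e_j}=0$, which together with $\sum u_{e_i+e_j}=0$ still admits nonzero solutions. What $Q\equiv 0$ \emph{does} give, after substituting $u_{23}=-u_{12}-u_{13}$, is $h_2(h_1-h_3)\,u_{12}+h_3(h_1-h_2)\,u_{13}=0$ for all $t$; since $h_i\equiv h_j$ is impossible this forces $u_{12}\parallel u_{13}$, i.e.\ precisely the parallelism you need, and $q(h(t))=0$ then comes for free. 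So Case~A should feed into your rank-one endgame rather than terminate early. There is also a smaller gap in Case~B: in the endgame you invoke $q(h(t))=0$, but in Case~B you have only established $Q(t_0)\parallel b$. The missing observation is that $L(0,t)=\sum_i h_i(t)w_i\in\R b$ forces $h(t)=\rho(t)\gamma$ where $b=\sum\gamma_i w_i$, and once your injectivity step places $\gamma$ in $\ker Q$ one obtains $q(h(t))=\rho(t)^2 q(\gamma)=0$. With these two repairs your argument goes through.
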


\subsubsection{Projective setting}

Let us state two new versions for the fundamental theorem of projective
geometry. In both theorems, projective lines passing through $n+2$
different points are assumed to be mapped onto lines. In the first
theorem all points are assumed to be in general position:
\begin{thm}
\label{thm:Pr+2} Let $n\geq2$. Let $\bar{p}_{1},...,\bar{p}_{n},\bar{p}_{n+1}\in\RP^{n}$
be generic and let $\bar{p}_{n+2}\in\RP^{n}$ be a projective point
satisfying $\bar{p}_{n+2}\not\in\Psp\{\bar{p}_{1},...,\bar{p}_{n}\}$
and also $\bar{p}_{n+2}\neq\bar{p}_{n+1}$. Let $F:\RP^{n}\to\RP^{n}$
be an injective mapping that maps any projective line containing one
of the points $\bar{p}_{1},...,\bar{p}_{n+2}$ onto a projective line.
Then $F$ is a projective-linear mapping.
\end{thm}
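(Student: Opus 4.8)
The plan is to dehomogenize in a well-chosen chart so as to reduce the projective hypothesis to the affine diagonal theorem (Theorem \ref{thm:newFTAG.Parallelism}), and then to use the two ``extra'' control points to upgrade the resulting diagonal map to a linear one. Since $\bar p_1,\dots,\bar p_{n+1}$ are generic they form a projective frame, so precomposing $F$ with a projective-linear map I may assume $\bar p_i=\bar e_i$ for $i=1,\dots,n+1$. The two conditions on $\bar p_{n+2}$ then say precisely that, in the affine chart $\R^n=\{x_{n+1}=1\}$, the point $Q:=\bar p_{n+2}$ is a genuine affine point with $Q\neq 0$ (it lies off the hyperplane at infinity $\Psp\{\bar e_1,\dots,\bar e_n\}$ and is distinct from the origin $\bar e_{n+1}$). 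Thus the controlled lines are: all affine lines in the $n$ coordinate directions $e_1,\dots,e_n$ (passing through the points at infinity $\bar e_1,\dots,\bar e_n$), together with the two pencils of affine lines through the origin $O=\bar e_{n+1}$ and through $Q$.

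Next I would show the image frame $F(\bar e_1),\dots,F(\bar e_{n+1})$ is independent, so that, postcomposing with a further projective-linear map, $F$ fixes $\bar e_1,\dots,\bar e_{n+1}$. If these images lay in a hyperplane $\Pi$, then since the lines through $\bar e_1$ cover $\RP^n$ their images cover $\Pi$, so $F$ would map onto $\Pi$; but the identical reasoning applied inside the control-point hyperplane $\Psp\{\bar e_2,\dots,\bar e_{n+1}\}$ shows $F$ already maps that proper subspace bijectively onto $\Pi$, contradicting injectivity on the larger space. With the frame fixed, an incidence build-up shows $F$ preserves the affine chart and every coordinate direction: the axis $\Psp\{\bar e_i,\bar e_{n+1}\}$ maps onto itself, and moving along coordinate directions one coordinate at a time (using $F(x)\neq\bar e_i=F(\bar e_i)$ by injectivity) one sees that every affine point maps to an affine point and each direction-$e_i$ line maps onto a direction-$e_i$ line. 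Hence $\tilde F:=F|_{\R^n}$ is an injection of $\R^n$ taking each line in $\L(e_1,\dots,e_n)$ onto a parallel line, and Theorem \ref{thm:newFTAG.Parallelism} yields the diagonal form $\tilde F(x)=(g_1(\alpha_1),\dots,g_n(\alpha_n))$ with each $g_i$ a bijection fixing $0$.

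It then remains to exploit the two affine control points. The pencil through $O$ forces rigidity of the $g_i$: testing lines through the origin in the directions $e_i+e_j$ and $e_i+re_j$ shows all $g_i$ are proportional to a single bijection $g$ and that $g$ is multiplicative, $g(rt)=\kappa_r\,g(t)$; rescaling the target coordinates I may assume $\tilde F=(g,\dots,g)$ with $g$ multiplicative and $g(1)=1$. The pencil through $Q$ then supplies additivity: a non-coordinate direction through $Q$ yields a relation $g(ru+q(1-r))=A_r\,g(u)+B_r$ which, combined with multiplicativity and the forced values $g(-1)=-1$ and $g(1/2)=1/2$ (whence $g(2)=2$), collapses to $g(\tau+1)=g(\tau)+1$ and hence to full additivity of $g$. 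An additive and multiplicative self-bijection of $\R$ is the identity, so $g=\mathrm{id}$, $\tilde F$ is linear, and therefore $F$ is projective-linear.

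The step I expect to be the main obstacle is the passage from the raw line-to-line hypothesis to a bona fide affine map, namely proving that the image frame is non-degenerate and that the affine chart together with all coordinate directions are genuinely preserved, since only here is the injectivity of $F$ used in an essential, global way. By contrast, once the diagonal normal form is in hand the final upgrade is an elementary (if slightly delicate) functional-equation computation, whose only real subtlety is handling points $Q$ with repeated coordinates, where additivity must be bootstrapped from multiplicativity rather than read off directly.
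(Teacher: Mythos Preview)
Your approach is essentially the paper's: normalize the frame, pass to the affine chart, apply Theorem~\ref{thm:newFTAG.Parallelism} to get a diagonal map, then use the pencils through $0$ and $Q$ to force multiplicativity and then additivity of the single diagonal function (the paper packages this last step as Proposition~\ref{lem:Diag2Str} via Lemmas~\ref{lem:DStrMult}--\ref{lem:f(2)Id}, after first normalizing $Q$ to have $0/1$ coordinates by a diagonal precomposition, which cleanly sidesteps the ``repeated coordinates'' subtlety you anticipate). Two small points to tidy: your frame non-degeneracy sketch is garbled as written---the correct argument is the inductive ``$F$ maps $\Psp\{\bar p_i\}_{i\le m}$ onto $\Psp\{F(\bar p_i)\}_{i\le m}$'' of Proposition~\ref{prop:PrParll}, which is what your second clause is groping for---and you must still close by showing $F$ is the identity on the hyperplane at infinity, which the paper does using one line through $\bar p_{n+2}$.
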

In the second theorem, $n+1$ points are assumed to be contained in
a projective subspace of co-dimension $1$, where the $(n+2)^{th}$
direction lies outside the subspace:
\begin{thm}
\label{thm:Pr+1} Let $n\geq2$. Let $\bar{H}\subset\RP^{n}$ be a
projective subspace of co-dimension $1$, and let $\bar{p}_{1},\bar{p}_{2}...,\bar{p}_{n+1}\in\bar{H}$
be generic in $\bar{H}$. Let $\bar{p}_{n+2}\in\R P^{n}\setminus\bar{H}$.
Let $F:\RP^{n}\to\RP^{n}$ be an injective mapping that maps any projective
line containing one of the points $\bar{p}_{1},...,\bar{p}_{n+2}$
onto a projective line. Then $F$ is a projective-linear mapping.
\end{thm}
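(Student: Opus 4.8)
The plan is to place $\bar H$ at infinity and reduce the statement to the affine results already available. Fix projective coordinates with $\bar H=\{x_{n+1}=0\}$, so that the finite chart $\RP^{n}\setminus\bar H$ is identified with $\R^{n}$. Since $\bar p_{1},\dots,\bar p_{n+1}$ are generic in $\bar H\cong\RP^{n-1}$, they correspond to $n$-independent directions $v_{1},\dots,v_{n+1}\in\R^{n}$, and each projective line through $\bar p_{i}$ (for $i\le n+1$) becomes an affine line in direction $v_{i}$. The remaining point $\bar p_{n+2}\notin\bar H$ corresponds to a finite point $q\in\R^{n}$, and the projective lines through it become the affine lines through $q$.

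The first, and main, step is to produce a target hyperplane $\bar H'$ with $F(\bar H)\subseteq\bar H'$ and $F^{-1}(\bar H')=\bar H$, so that $F$ respects the chart decomposition. Restricting $F$ to $\bar H\cong\RP^{n-1}$ gives an injection that maps every projective line through one of the $n+1=(n-1)+2$ generic points $\bar p_{1},\dots,\bar p_{n+1}$ to a line; I would show by induction on $n$ that such a map is a projective-linear embedding of $\RP^{n-1}$ onto a hyperplane $\bar H'\subset\RP^{n}$. The base case $n=2$ is immediate, since then $\bar H$ is itself a projective line through $\bar p_{1}$, whence $\bar H':=F(\bar H)$ is a line. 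The inductive step is precisely the generic-points configuration of Theorem \ref{thm:Pr+2} in dimension $n-1$, used in the form -- valid because the reduction to the affine Theorems \ref{thm:poly-form}--\ref{thm:Poly_n+1} tolerates a target dimension $m>n$ -- that the image spans a hyperplane. Once $F(\bar H)\subseteq\bar H'$ is known, injectivity gives $F^{-1}(\bar H')=\bar H$, since a finite point sent into $\bar H'=F(\bar H)$ would collide with a point of $\bar H$. I expect this hyperplane-image step to be the principal obstacle: for $n\ge3$ the set $\bar H$ is not a single line, so its image cannot be pinned down by one use of the line-to-line hypothesis and genuinely requires the lower-dimensional theorem.

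With $\bar H'$ chosen as the target hyperplane at infinity, $F$ restricts to an injection $\tilde F:\R^{n}\to\R^{n}$ of finite charts, and now the parallelism hypothesis comes for free. Indeed, all affine lines of a fixed direction $v_{i}$ are exactly the projective lines through the single point $\bar p_{i}\in\bar H$, so their images are lines through the single point $F(\bar p_{i})\in\bar H'$, that is, mutually parallel lines. Hence $\tilde F$ maps the lines of $\L(v_{1},\dots,v_{n+1})$ onto lines, parallel to parallel, and Theorem \ref{thm:newFTAG.Parallelism-n+1dir} applies: in suitable coordinates $x=\sum_{i=1}^{n}\alpha_{i}u_{i}$ one has $\tilde F(x)-\tilde F(0)=\sum_{i=1}^{n}f_{i}(\alpha_{i})w_{i}$, where $\{u_{i}\}$ and $\{w_{i}\}$ are bases and the $f_{i}$ are additive bijections with $f_{i}(1)=1$.

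It remains to exploit the so-far unused point $\bar p_{n+2}$, namely that every line through $q$ maps to a line, to force each $f_{i}$ to be the identity. Writing $q=\sum_{i}\beta_{i}u_{i}$, the image of the line $t\mapsto q+t(u_{i}-u_{j})$ varies only in its $w_{i},w_{j}$ components, through $f_{i}(\beta_{i})+f_{i}(t)$ and $f_{j}(\beta_{j})-f_{j}(t)$; this traces a line exactly when $f_{i}$ and $f_{j}$ are proportional, and the normalization $f_{i}(1)=f_{j}(1)=1$ forces $f_{i}=f_{j}$, so all the $f_{i}$ equal a single additive bijection $f$. Feeding in instead the direction $u_{i}+c\,u_{j}$ shows that the varying part $t\mapsto(f(t),f(ct))$ lies on a line through the origin, i.e. $f(ct)=f(c)f(t)$ for all $c,t$; thus $f$ is additive and multiplicative with $f(1)=1$, hence a field automorphism of $\R$, hence the identity. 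Consequently $\tilde F$ is affine-linear with invertible linear part, and extends to a projective-linear self-map $G$ of $\RP^{n}$ agreeing with $F$ on the finite chart. Finally, for each $\bar x\in\bar H$ the line through $\bar p_{n+2}$ and $\bar x$ carries a dense set of finite points on which $F=G$, so $F(\bar x)$ and $G(\bar x)$ both equal the unique intersection of this common image line with $\bar H'$; thus $F=G$ everywhere, and $F$ is projective-linear.
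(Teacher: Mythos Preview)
Your overall architecture---put $\bar H$ at infinity, obtain an affine map with the parallelism hypothesis for free, apply Theorem~\ref{thm:newFTAG.Parallelism-n+1dir}, then use the lines through $\bar p_{n+2}$ to pin down the additive bijection---matches the paper's proof. The gap is in your ``principal obstacle'', the hyperplane-image step.

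Your proposed inductive reduction to Theorem~\ref{thm:Pr+2} in dimension $n-1$ does not work as written. The proof of Theorem~\ref{thm:Pr+2} passes to an affine chart by first showing (via Proposition~\ref{prop:PrParll}) that $F$ carries a specific projective hyperplane onto a projective hyperplane; only then does the complement become an affine space on which the parallel-line theorems apply. If you try to run that argument for $F|_{\bar H}:\RP^{n-1}\to\RP^{n}$, the image of the codimension-one subspace of $\bar H$ is an $(n-2)$-dimensional subspace of $\RP^{n}$, and its complement in $\RP^{n}$ is \emph{not} an affine space, so the reduction to Theorems~\ref{thm:poly-form}--\ref{thm:Poly_n+1} never gets off the ground. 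The statement ``the affine theorems tolerate a target dimension $m>n$'' is true but irrelevant: the problem is upstream, in the projective-to-affine passage.

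The paper sidesteps all this with a single stroke: since $\bar p_{1},\dots,\bar p_{n}$ (any $n$ of your $n+1$ points) are already generic in $\RP^{n}$, Proposition~\ref{prop:PrParll} with $m=n$ gives directly that $F(\bar H)=F(\Psp\{\bar p_{1},\dots,\bar p_{n}\})=\Psp\{F(\bar p_{1}),\dots,F(\bar p_{n})\}$ is a hyperplane, and moreover that this is an equality (so your $F^{-1}(\bar H')=\bar H$ follows at once). No induction on $n$, no appeal to Theorem~\ref{thm:Pr+2}, and no need to analyze $F|_{\bar H}$ separately.

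Two smaller remarks. First, Theorem~\ref{thm:newFTAG.Parallelism-n+1dir} already yields a \emph{single} additive bijection $f$ (not distinct $f_{i}$'s), so your argument equating the $f_{i}$ via the line $t\mapsto q+t(u_{i}-u_{j})$ is superfluous. Second, your direct verification that $f$ is multiplicative from the lines through $q$ is correct, but the paper packages this as Lemma~\ref{lem:Add1Str}: an additive bijection that sends all lines through one point into lines sends \emph{every} line into a line (translate by additivity), whence the classical fundamental theorem gives affinity outright. Finally, in your last step the word ``dense'' is a red herring---no continuity is assumed; what you actually use is that $F$ and $G$ agree on all but one point of the projective line through $\bar p_{n+2}$ and $\bar x$, and both map it onto a line.
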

As in the affine setting, if a continuity assumption is added to Theorems
\ref{thm:Pr+2}-\ref{thm:Pr+1}, along with the assumption that the
mapping is surjective, the assumption that projective lines are mapped
\textit{onto} projective lines may be replaced by a \textit{collinearity}
assumption. This is due to Proposition \ref{prop:Continuity_proj}.
Moreover, if a continuity assumption is added to Theorem \ref{thm:Pr+1},
one may easily verify that the assumption on lines through $\bar{p}_{n+2}$
may be removed (by a minor adjustment of its proof). However, in Theorem
\ref{thm:Pr+2}, this is not possible.

\subsubsection{Other number fields}

The algebraic nature of our proofs in this paper implies that many
of our results hold for fields other than $\R$. For example, the
results hold for $\Z_{p}$, with $p\neq2$. However, for simplicity
of the exposition, we focus solely on $\R$, which keeps our arguments
clearer to the reader.

\subsection*{Acknowledgments}

We wish to thank Prof. Leonid Polterovitch for useful remarks.

\section{Fundamental theorems of affine geometry\label{sec:FTAG}}

\subsection{Introductory remarks}

Let us begin by an example which shows that the most straightforward
generalization, which works in the projective setting, does not hold
in the affine setting. Namely, we can find a map (actually, a polynomial
automorphism) $P:\R^{3}\to\R^{3}$ which maps all lines in four directions,
each three of which are linearly independent, to lines, and yet is
non-linear. 
\begin{example}
\label{exm:R3} Define $P:\R^{3}\to\R^{3}$ by  
\[
P(x_{1},x_{2},x_{3})=(x_{1}+x_{3}(x_{1}-x_{2}),x_{2}+x_{3}(x_{1}-x_{2}),x_{3}).
\]
Then clearly $\{P(w+te_{i})\}_{t\in\R}$ is a line for any $i\in\{1,2,3\}$
and $w\in\R^{3}$. One can also check that $\left\{ P(v+t(e_{1}+e_{2}-e_{3}))\right\} _{t\in\R}$
is parallel to $(1+w_{2}-w_{1},1+w_{2}-w_{1},-1)$.
\end{example}
Section \ref{sec:FTAG} is organized as follows. In Section \ref{sec:preliminaries}
we gather some basic useful facts. In Section \ref{sec:R2ToRn} we
consider the plane $\R^{2}$, and see what must be the form of a mapping
$F:\R^{2}\to\R^{m}$ which maps all lines in two directions onto lines
in $\R^{n}$ for $n\geq2$ . This is given as Theorem \ref{thm:General2Dcoll}
below. Then, In Section \ref{sec: Poly-form}, we use this as an induction
basis for the general form of a mapping $F:\R^{n}\to\R^{m}$ which
maps all lines in a family $\L(v_{1},\ldots,v_{n})$ onto lines. This
form was given in Theorem \ref{thm:poly-form} in the introduction.
In Section \ref{sec: n+1-directions}, we see how an additional $(n+1)^{th}$
direction, for which lines are mapped onto lines, further restricts
the polynomial form of the mapping, obtaining Theorem \ref{thm:Poly_n+1}.
In Section \ref{sub:affine-additive} we discuss several cases in
which collinearity for lines in a finite number of directions suffices
to derive affine-additivity, in particular Theorem \ref{thm:NFTAG3D}.

\subsection{Preliminary facts and results\label{sec:preliminaries}}

We will use the notation $\sp\{v_{1},\ldots,v_{k}\}$ to denote the
linear span of the $k$ vectors $\{v_{i}\}_{i=1}^{k}$, so a line
$a+\R b$ can also be written as $a+\sp\{b\}$.

Let $F:\R^{n}\to\R^{m}$ be an injective mappings which maps lines
in a given family $\L(v_{1},...,v_{k})$ onto lines. We will have
use of the following simple self-evident facts, concerning such a
mapping, which we gather here  for future reference:
\begin{fact}
\label{fact:trans} A translation of $F$ by any vector $v_{0}\in\R^{m}$,
$F(x)+v_{0}$ is also an injection which maps each line in $\L(v_{1},...,v_{k})$
onto a line. We will usually use this property to assume without loss
of generality that $F(0)=0$.
\end{fact}

\begin{fact}
\label{fact:linear}Compositions of $F$ with invertible linear transformations
$B\in{\rm GL}_{n}(\R)$ and $A\in{\rm GL}_{m}(\R)$ are also injective,
and satisfy the property of line-onto-line for certain lines: $A\circ F$
maps each line in $\L(v_{1},...,v_{k})$ onto a line, whereas $F\circ B$
maps each line in $\L(B^{-1}v_{1},B^{-1}v_{2},...,B^{-1}v_{k})$ onto
a line. Moreover, for two sets of $n+1$ points in general positions,
$v_{1},\dots,v_{n+1}\in\R^{n}$ and $u_{1},\dots u_{n+1}\in\R^{n}$,
there exists an invertible linear transformation $B\in\GL_{n}\left(\R\right)$
such that $F\circ B$ maps each line in $\L\left(u_{1},\dots,u_{n+1}\right)$
onto a line (this is also an easy consequence of Theorem \ref{thm:PrLiPo}
below). We shall often use this fact to assume without loss of generality
that we are working with some standard families of lines. 
\end{fact}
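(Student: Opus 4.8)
The plan is to treat the three assertions separately; the first two are immediate from the fact that an invertible linear map carries lines to lines, and only the ``Moreover'' clause requires an actual construction. First, since $A$ and $B$ are bijections, both $A\circ F$ and $F\circ B$ are compositions of an injection with a bijection, hence injective. For the line-onto-line properties, I would use that a linear $A\in\GL_m(\R)$ sends a line $a+\R b$ onto the line $Aa+\R(Ab)$: thus if $F$ maps a line $\ell\in\L(v_1,\dots,v_k)$ onto a line, then $A\circ F$ maps $\ell$ onto $A(F(\ell))$, again a line. Dually, $B\in\GL_n(\R)$ sends the line $a+\R(B^{-1}v_i)$ onto $Ba+\R v_i$, which lies in $\L(v_1,\dots,v_k)$; applying $F$ yields a line, so $F\circ B$ maps $a+\R(B^{-1}v_i)$ onto a line, establishing the second claim.

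For the ``Moreover'' clause, by the previous paragraph it suffices to produce $B\in\GL_n(\R)$ with $B^{-1}v_i\in\R u_i$ for each $i$, equivalently $Bu_i=\lambda_i v_i$ with $\lambda_i\neq 0$, since then $\L(B^{-1}v_1,\dots,B^{-1}v_{n+1})=\L(u_1,\dots,u_{n+1})$. Because the $u_i$ are in general position, $u_1,\dots,u_n$ form a basis and $u_{n+1}=\sum_{i=1}^n a_i u_i$ with every $a_i\neq 0$ (were some $a_i=0$, the $n$ vectors $u_{n+1}$ together with $\{u_j:j\le n,\,j\neq i\}$ would be dependent, violating $n$-independence); likewise $v_{n+1}=\sum_{i=1}^n b_i v_i$ with all $b_i\neq 0$. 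I would then define $B$ on the basis by $Bu_i:=(b_i/a_i)v_i$ for $i=1,\dots,n$, which is invertible as the $v_i$ are independent and $b_i/a_i\neq 0$; a one-line check gives $Bu_{n+1}=\sum_i a_i(b_i/a_i)v_i=\sum_i b_i v_i=v_{n+1}\in\R v_{n+1}$, so the requirement holds for all $n+1$ indices.

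The only genuine content lies in this last construction, and the hypothesis enters at exactly one point: the nonvanishing of the expansion coefficients $a_i,b_i$, which is precisely what general position guarantees. I do not anticipate any real obstacle beyond this bookkeeping. As the statement notes, one could alternatively invoke Theorem \ref{thm:PrLiPo} to match the two generic frames up to scaling, but the explicit $B$ above seems cleaner and self-contained.
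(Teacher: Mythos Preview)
Your proposal is correct. The paper treats this fact as self-evident and gives no proof beyond the parenthetical hint to Theorem~\ref{thm:PrLiPo}; your explicit construction of $B$ via $Bu_i=(b_i/a_i)v_i$ is exactly the kind of direct argument the paper would expect the reader to supply, and you correctly identify that the general-position hypothesis is used precisely to guarantee the nonvanishing of the coefficients $a_i,b_i$.
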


\begin{fact}
\label{fact:dimensions} The image of an injective mapping $F:\R^{2}\to\R^{m}$,
which maps each line in a given family $\L(v_{1},v_{2})$ (where $v_{1},v_{2}$
are linearly independent) is always contained in a three-dimensional
affine subspace of $\R^{m}$, and so by composing $F$ with an appropriate
linear transformation we may identify this subspace with $\R^{3}$.
In general, for any $n$, and $F:\R^{n}\to\R^{m}$ (injective that
maps lines-onto-lines for a given family $\L(v_{1},...,v_{k})$),
one may show that the image of $F$ is contained in an affine subspace
of dimension $2^{n-1}$. 
\end{fact}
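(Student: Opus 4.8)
The plan is to reduce to standard directions and then exploit the observation that each image line is pinned down by where it meets two fixed image lines. By Facts \ref{fact:trans} and \ref{fact:linear} I may assume $F(0)=0$ and $v_1=e_1$, $v_2=e_2$, so that $F$ carries every line $\{(t,c):t\in\R\}$ (direction $e_1$) and every line $\{(c,t):t\in\R\}$ (direction $e_2$) onto a line in $\R^m$. For the planar case I would let $A$ be the affine span of the two lines $F(\{x_2=0\})$ and $F(\{x_2=1\})$. As the affine hull of two lines, $A$ has dimension at most $3$. Now fix $c\in\R$ and look at the direction-$e_2$ line $\ell_c=\{(c,t):t\in\R\}$: its image is a line containing $F(c,0)\in F(\{x_2=0\})\subseteq A$ and $F(c,1)\in F(\{x_2=1\})\subseteq A$, and these two points are distinct by injectivity. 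Hence $F(\ell_c)$ is the unique line through two distinct points of the affine subspace $A$, so $F(\ell_c)\subseteq A$. Since every point of $\R^2$ lies on some such $\ell_c$, we get $F(\R^2)\subseteq A$, and composing with a linear map identifying $A$ with $\R^3$ yields the stated conclusion.

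The general bound follows by induction on $n$, assuming (as is the relevant case) that the family contains $n$ linearly independent directions, normalized to $e_1,\dots,e_n$ via Fact \ref{fact:linear}; let $D_n$ denote the dimension guaranteed. Writing points as $(\bar{x},x_n)$ with $\bar{x}\in\R^{n-1}$, the restrictions of $F$ to the hyperplanes $H_0=\{x_n=0\}$ and $H_1=\{x_n=1\}$ each map the lines in the $n-1$ directions $e_1,\dots,e_{n-1}$ onto lines, so the inductive hypothesis places $F(H_0)$ and $F(H_1)$ in affine subspaces $A_0,A_1$ of dimension at most $D_{n-1}$. Exactly as in the planar case, the image of each direction-$e_n$ line $\{(\bar{x},t):t\in\R\}$ is the line through the distinct points $F(\bar{x},0)\in A_0$ and $F(\bar{x},1)\in A_1$, hence lies in the affine span of $A_0\cup A_1$. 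Therefore $F(\R^n)$ is contained in that span, whose dimension is at most $\dim A_0+\dim A_1+1\le 2D_{n-1}+1$. With base case $D_1=1$ (a single line), the recursion $D_n\le 2D_{n-1}+1$ gives $D_n\le 2^n-1$; in particular $D_2\le 3$, consistent with the planar statement above.

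The only genuinely delicate point is the repeated use of the elementary fact that a line meeting an affine subspace in two distinct points lies entirely inside it: this is precisely what converts the ``line onto line'' hypothesis into a containment statement, and it drives both the base case and the inductive step. Everything else is bookkeeping with the normalizations supplied by Facts \ref{fact:trans} and \ref{fact:linear}. I would also record that the recursion is sharp: the map sending $x=(x_1,\dots,x_n)$ to the tuple of all $2^n-1$ nonconstant multilinear monomials $\bigl(\prod_{i\in S}x_i\bigr)_{\emptyset\neq S\subseteq\{1,\dots,n\}}$ is injective (it has the coordinate functions $x_1,\dots,x_n$ among its entries) and sends each direction-$e_j$ line onto a line, since every such monomial is affine in $x_j$; its image has affine span all of $\R^{2^n-1}$. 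Thus the correct dimension in the general statement is $2^n-1$, which agrees with the value $3$ proved for $n=2$.
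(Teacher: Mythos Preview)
Your argument is correct and is precisely the natural one; the paper in fact does not supply a proof at all, listing this among the ``simple self-evident facts'' in Section~\ref{sec:preliminaries}. The two-line planar argument (affine hull of two image lines has dimension at most $3$, and every vertical image line is pinned by its two intersection points) is exactly how one would justify it, and your inductive extension via the two hyperplanes $\{x_n=0\}$ and $\{x_n=1\}$ is the obvious continuation.

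You are also right to flag the discrepancy in the general statement: the paper writes $2^{n-1}$, but your recursion $D_n\le 2D_{n-1}+1$ with $D_1=1$ gives $D_n\le 2^n-1$, and this is the figure consistent with the planar value $3=2^2-1$. The paper's own Theorem~\ref{thm:poly-form} confirms this: the representation \eqref{eq:n-web-rep} has $2^n$ vector coefficients $u_\delta$, so after translating $F(0)$ to the origin the image sits in the span of the remaining $2^n-1$ vectors. Your sharpness example (the map to all nonconstant multilinear monomials) is exactly the universal instance of that representation with $f_i={\rm id}$ and the $u_\delta$ chosen to be a basis, so it dovetails with the rest of the paper. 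In short: correct proof, correct correction.
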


\begin{fact}
From any point in the image of $F$, there emanate exactly $k$ lines
which are the images of lines in $\L(v_{1},...,v_{k})$ under $F$,
intersecting only at that point.
\end{fact}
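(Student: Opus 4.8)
The plan is to read everything off directly from injectivity of $F$ together with the pairwise linear independence of the directions $v_{1},\dots,v_{k}$ (which holds throughout, since in all of our settings the $v_{i}$ are at least $2$-independent). Fix a point in the image, say $q=F(p)$ with $p\in\R^{n}$. First I would identify the lines of the family passing through $p$: a line $a+\R v_{i}$ contains $p$ exactly when $p-a\in\sp\{v_{i}\}$, in which case $a+\R v_{i}=p+\R v_{i}$. Hence, since the $v_{i}$ are pairwise non-parallel, there are exactly $k$ lines of $\L(v_{1},\dots,v_{k})$ through $p$, namely $\ell_{i}^{0}:=p+\R v_{i}$ for $i=1,\dots,k$. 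By hypothesis each $\ell_{i}^{0}$ is mapped onto a line $\ell_{i}:=F(p+\R v_{i})$, and each $\ell_{i}$ contains $q=F(p)$, so indeed $k$ image lines emanate from $q$.

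The next step is to show these image lines are distinct, so that exactly $k$ of them emanate from $q$. Suppose $\ell_{i}=\ell_{j}$ for some $i\ne j$. Then the point $F(p+v_{i})\in\ell_{i}$ lies on $\ell_{j}$, so $F(p+v_{i})=F(p+sv_{j})$ for some $s\in\R$; injectivity of $F$ forces $p+v_{i}=p+sv_{j}$, i.e. $v_{i}=sv_{j}$, contradicting the linear independence of $v_{i}$ and $v_{j}$. Thus the $\ell_{i}$ are pairwise distinct.

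Finally I would verify that the image lines meet only at $q$. If a point $q'$ lay on both $\ell_{i}$ and $\ell_{j}$ with $i\ne j$, then $q'=F(p+tv_{i})=F(p+sv_{j})$ for some $t,s\in\R$, and injectivity gives $tv_{i}=sv_{j}$; linear independence of $v_{i},v_{j}$ then yields $t=s=0$, whence $q'=F(p)=q$. The same argument applied with $j=i$ also shows that $q$ is attained only once along each $\ell_{i}$, since $F(p+tv_{i})=q$ forces $t=0$. This establishes that the $k$ lines intersect pairwise only at $q$.

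There is essentially no obstacle here: the statement is recorded as a self-evident fact, and its entire content is the elementary observation that injectivity of $F$ transports the picture ``$k$ distinct directions through a point, meeting only there'' from the domain to the image. The only point worth flagging is the standing assumption that the directions are pairwise linearly independent; without it, parallel $v_{i}$'s would collapse to the same line and the count of emanating image lines would drop below $k$.
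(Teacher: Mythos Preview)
Your proof is correct. The paper records this statement among several ``simple self-evident facts'' and gives no proof whatsoever, so your argument merely fills in what the authors left implicit; there is no approach to compare against.

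One small completeness point: to get \emph{exactly} $k$ image lines through $q$ (rather than at least $k$), you should also observe that any line $\ell\in\L(v_{1},\dots,v_{k})$ with $q\in F(\ell)$ must contain $p$, since $q=F(p')$ for some $p'\in\ell$ forces $p'=p$ by injectivity. Hence every image line through $q$ is already one of your $\ell_{i}$. This is immediate and you clearly have it in mind, but it is not spelled out.
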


\subsection{Maps from $\protect\R^{2}$ to $\protect\R^{n}$}

\label{sec:R2ToRn} One may verify  that a bijection which maps lines
onto lines, in all directions, also maps planes onto planes. This
fact implies that the classical fundamental theorem of affine geometry
is essentially a two dimensional claim (since linearity is also a
two dimensional notion). Since in the plane any two non-intersecting
lines must be parallel, collineations from the plane to itself have
a very restrictive ``diagonal'' form, even if the line-to-line condition
is assumed only for all lines in two pre-chosen directions, as the
following lemma suggests. Recall that $\L(e_{1},e_{2})$ denotes the
family of all lines in directions $e_{1}$ or $e_{2}$.
\begin{lem}
\label{lem:PlaneToPlane2directionsColl} Let $F:\R^{2}\to\R^{2}$
be an injection that maps each line in $\L(e_{1},e_{2})$ onto a line.
Then $F$ is of the following form 
\[
F(se_{1}+te_{2})-F\left(0\right)=f\left(s\right)u_{1}+f\left(t\right)u_{2},
\]
where $u_{1},u_{2}\in\R^{2}$ are linearly independent, and $f,g:\R\to\R$
are bijections with $f(0)=g(0)=0$, and $f(1)=g(1)=1$.\end{lem}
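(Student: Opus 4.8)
The plan is to show that $F$ maps the two families of lines $\L(e_1)$ and $\L(e_2)$ to two families of lines, and that within each family all image lines are parallel. This forces $F$ to have the separated ``diagonal'' form. Since $\L(e_1,e_2)$ consists of all horizontal lines (direction $e_1$) and all vertical lines (direction $e_2$), the key structural observation is that two distinct horizontal lines do not intersect, hence their images are two disjoint lines in $\R^2$, and in the plane two disjoint lines must be parallel. So all images of horizontal lines are mutually parallel, say in some direction $u_1$; likewise all images of vertical lines are parallel to some $u_2$. These $u_1, u_2$ must be linearly independent, otherwise a horizontal and a vertical line (which meet in exactly one point) would map to parallel or equal lines, contradicting injectivity at their unique common point.

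First I would normalize using Fact~\ref{fact:trans} to assume $F(0)=0$. Then I would make precise the ``parallel images'' claim: fix two horizontal lines $\ell_s = \{se_1 + te_2 : t \in \R\}$ wait — I should be careful with coordinates. Let me parametrize: a horizontal line at height $t$ is $\{se_1 + te_2 : s\in\R\}$ and a vertical line at abscissa $s$ is $\{se_1 + te_2 : t\in\R\}$. For two different heights $t \neq t'$ the horizontal lines are disjoint, so their image lines are disjoint, hence parallel; this gives a common direction $u_1$ for all horizontal images. Symmetrically all vertical images share a direction $u_2$. Next I would fix the linear-independence of $u_1, u_2$: the horizontal line at height $0$ and the vertical line at abscissa $0$ meet only at the origin, so their images are two lines through $F(0)=0$ meeting only there; if $u_1, u_2$ were dependent these two image lines would coincide, violating injectivity.

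Once the directions are fixed, I would extract the coordinate functions. Since $F(se_1+te_2)$ lies on the image of the horizontal line at height $t$ (a line in direction $u_1$) and simultaneously on the image of the vertical line at abscissa $s$ (a line in direction $u_2$), and since $u_1, u_2$ are a basis, I can write $F(se_1+te_2) = \phi(s,t)\,u_1 + \psi(s,t)\,u_2$ for scalar functions $\phi, \psi$. The horizontal line at height $t$ maps into direction $u_1$, which forces the $u_2$-coordinate $\psi(s,t)$ to be independent of $s$, i.e. $\psi(s,t) = g(t)$ for some function $g$; symmetrically $\phi(s,t) = f(s)$. Thus $F(se_1+te_2) = f(s)u_1 + g(t)u_2$. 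Bijectivity of $f$ and $g$ follows from injectivity of $F$ restricted to a single horizontal (resp.\ vertical) line together with surjectivity onto the full image line. Finally, after absorbing constants into $u_1, u_2$ (rescaling and using $F(0)=0$), I would normalize $f(0)=g(0)=0$ and $f(1)=g(1)=1$.

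The main obstacle I anticipate is rigorously establishing that the image of each horizontal line is a \emph{full} line onto which $F$ maps \emph{bijectively}, and hence that $f$ (and $g$) is a genuine bijection of $\R$ rather than merely injective; this uses the hypothesis that lines are mapped \emph{onto} lines, and I would need to argue that as $s$ ranges over $\R$ the image covers the entire line in direction $u_1$, and that the value $g(t)$ (the ``intercept'' of the $t$-th horizontal image line) is itself an injection in $t$ so that distinct heights give distinct parallel lines — which is exactly what disjointness of the image lines provides. The one subtlety to check carefully is that the two scalar functions are well-defined globally and consistent at every point, i.e.\ that the decomposition into $f(s)u_1 + g(t)u_2$ holds simultaneously for all $(s,t)$, which follows from the two parallelism facts but should be stated cleanly.
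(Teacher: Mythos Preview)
Your proposal is correct and follows essentially the same route as the paper: the key observation is that disjoint lines in $\R^2$ must be parallel, so each family $\L(e_i)$ is mapped to a family of parallel lines, after which the intersection structure forces the diagonal form. The only cosmetic difference is that the paper first composes with an invertible linear map to arrange $u_1=e_1$, $u_2=e_2$ (so $f,g$ are read off directly along the axes), whereas you work in the basis $u_1,u_2$ throughout and normalize at the end; neither choice buys anything over the other.
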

\begin{proof}
By translating $F$ we may assume that $F(0)=0$. Moreover, by composing
$F$ with a linear transformation from the left, we may assume that
$F(e_{1})=e_{1}$ and $F(e_{2})=e_{2}$. This linear transformation
must be invertible since $e_{1}$ and $e_{2}$ cannot be mapped by
the original $F$ to linearly dependent vectors, as (together with
the line-onto-line) this would contradict the injectivity of $F$.
Define $f:\R\to\R$ by the relation $F(se_{1})=f(s)e_{1}$. Similarly,
define $g:\R\to\R$ by $F(te_{2})=g(t)e_{2}$. Clearly, by our assumptions,
$f$ and $g$ are bijections satisfying $f(0)=g(0)=0$ and $f(1)=g(1)=1$.

Since $F$ is injective and since any two parallel lines in $\L(e_{1},e_{2})$
are mapped onto two lines, they must not intersect and so must be
parallel. Hence, for $i=1,2$ and for every line $l$ in $\L(e_{i})$,
$F(l)$ is again in $\L(e_{i})$. Now, any point $s_{0}e_{1}+t_{0}e_{2}$
is the intersection of the the lines $s_{0}e_{1}+\sp\{e_{2}\}$ and
$\sp\{e_{1}\}+t_{0}e_{2}$ which are mapped to $f(s_{0})e_{1}+\sp\{e_{2}\}$
and to $\sp\{e_{1}\}+g(t_{0})e_{2}$, respectively. Since their images
intersect at $F(s_{0}e_{1}+t_{0}e_{2})$, we conclude that $F(s_{0}e_{1}+t_{0}e_{2})=f(s_{0})e_{1}+g(t_{0})e_{2}$,
as required. 
\end{proof}
If two parallel lines in $\R^{2}$ are mapped to lines by an injection,
then their images do not intersect. Above, as the image of $F$ was
contained in a plane, this meant that the images were parallel lines.
If the images are in $\R^{n}$ with $n>2$, this need no longer be
the case, and the images of parallel lines can be skew. Indeed, one
easily constructs collineations in two directions, embedding the plane
into $\R^{n}$ with $n>2$, the image of which does not contain {\em
any} pair of parallel lines. (This image is, however, contained in
a three dimensional affine subspace, see Fact \ref{fact:dimensions}).
Still, it turns out that such mappings also have a specific simple
form. 
We prove the following theorem, which is strongly connected with a
known theorem about determination of doubly ruled surfaces, see Remark
\ref{rem:doubly}.
\begin{thm}
\label{thm:General2Dcoll} Let $n\geq2$. Let $F:\R^{2}\rightarrow\R^{n}$
be an injective mapping that maps each line in $\L(e_{1},e_{2})$
onto a line. Then $F$ is given by 
\begin{equation}
F(se_{1}+te_{2})-F\left(0\right)=f\left(s\right)u_{1}+g\left(t\right)u_{2}+f\left(s\right)g\left(t\right)u_{3}\label{eq:2to-n-2directions}
\end{equation}
where $u_{1},u_{2}\in\R^{n}$ are linearly independent, $u_{3}\in\R^{n}$,
and $f,g:\R\to\R$ are bijective with $f(0)=g(0)=0$ and $f(1)=g(1)=1$.\end{thm}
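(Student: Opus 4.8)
The plan is to normalize, cut the ambient dimension down to $3$, recognize the image as a doubly ruled surface, and then read the bilinear form off the ruling. By Fact~\ref{fact:trans} I assume $F(0)=0$. The coordinate lines $\R e_1$ and $\R e_2$ meet only at $0$, so by injectivity their images are two distinct lines through $0$; writing $F(se_1)=f(s)u_1$ and $F(te_2)=g(t)u_2$ defines functions $f,g\colon\R\to\R$ and linearly independent $u_1,u_2$, and the choices $u_1=F(e_1)$, $u_2=F(e_2)$ give $f(0)=g(0)=0$ and $f(1)=g(1)=1$. By Fact~\ref{fact:dimensions} the image lies in a $3$-dimensional affine subspace, so by Fact~\ref{fact:linear} I may assume $n=3$. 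If the image is contained in a plane, Lemma~\ref{lem:PlaneToPlane2directionsColl} already gives the conclusion with $u_3=0$; so from now on I assume the image spans $\R^3$.

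Write $H_t$ and $V_s$ for the images of the grid lines $\{se_1+te_2:s\in\R\}$ and $\{se_1+te_2:t\in\R\}$. Since distinct lines inside each of the two families are parallel in $\R^2$, injectivity forces the members of each family to be pairwise non-intersecting (hence parallel or skew), while $H_t$ and $V_s$ always meet, in the single point $F(se_1+te_2)$. The first main step is to show that all these lines lie on one quadric surface $Q$: choosing three mutually skew $V_{s_1},V_{s_2},V_{s_3}$ (which exist once the image is not planar), let $Q$ be the quadric they determine, so that they form one of its reguli; each $H_t$ is a common transversal of the three, hence a generator of the opposite regulus and thus $H_t\subset Q$; and then each $V_s$, meeting the three generators $H_{t_1},H_{t_2},H_{t_3}$, also lies on $Q$. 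This is the doubly ruled surface referred to in Remark~\ref{rem:doubly}.

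The second step is to pin $Q$ down to a hyperbolic paraboloid. Here the key device is that $H_0=\R u_1$ is covered entirely by $F$: if some generator $V_*$ of the $V$-regulus were not of the form $V_s$, it would meet $H_0$ at a point $F(s_0e_1)\in V_{s_0}$, forcing the two same-regulus generators $V_*,V_{s_0}$ to meet, a contradiction; so no affine generator is omitted. Since on a hyperboloid of one sheet the generator meeting $H_0$ at infinity cannot equal any $V_s$ (each $V_s\cap H_0=f(s)u_1$ is finite), this excludes the hyperboloid, and the degenerate cases are likewise ruled out. Thus $Q$ is a hyperbolic paraboloid whose two reguli are exactly $\{V_s\}$ and $\{H_t\}$; in a suitable basis $u_1,u_2,u_3$ (with $u_1,u_2$ as above) it is parametrized by $(a,b)\mapsto au_1+bu_2+ab\,u_3$, the two reguli being the curves $a=\mathrm{const}$ and $b=\mathrm{const}$. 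Matching these against $V_s$ and $H_t$ and restricting to the axes identifies the parameters with $f(s)$ and $g(t)$, yielding $F(se_1+te_2)=f(s)u_1+g(t)u_2+f(s)g(t)u_3$ as in~\eqref{eq:2to-n-2directions}; bijectivity of $f,g$ and the normalizations follow from injectivity of $F$ and the choice of $u_1,u_2$.

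The step I expect to be the main obstacle is exactly this rigidity argument carried out with no continuity hypothesis: proving that two families of pairwise skew lines with all cross-incidences must be the two reguli of a single quadric, and then selecting the hyperbolic paraboloid while excluding the hyperboloid of one sheet and the degenerate (planar or conical) configurations. Equivalently, one must show that a single pair of bijections $(f,g)$ governs the parametrization of every line in both families; the underlying fact is that the correspondence between two generators of one regulus induced by the opposite regulus is a projectivity, which the ``onto a line'' hypothesis then forces to be the plain bilinear map. A more computational alternative, once the coordinates above are fixed, is to intersect $H_t$ with $V_s$ explicitly and derive a functional equation that directly pins the reparametrizations to $f$ and $g$, bypassing the surface classification.
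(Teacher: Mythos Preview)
Your route via doubly ruled surfaces is legitimate and is exactly the alternative the paper flags in Remark~\ref{rem:doubly}; the paper's own proof takes the ``computational alternative'' you mention at the end. After normalizing so that three lines of $\L_F(e_1)$ become $\sp\{e_1\}$, $e_2+\sp\{e_1+e_3\}$, and $ae_2+\sp\{e_1+ae_3\}$ (this last step uses an auxiliary $a\neq 0,1$ and two further linear changes of coordinates), the paper computes the transversal family $\L_F(e_2)$ directly and reads off the image surface $\{te_1+se_2+tse_3\}$, never invoking the quadric classification. Your approach is conceptually cleaner but leans on the regulus machinery; the paper's is longer but elementary and self-contained.

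Your exclusion of the hyperboloid, however, has a genuine gap, which you partly anticipate. The sentence ``if some generator $V_*$ of the $V$-regulus were not of the form $V_s$, it would meet $H_0$ at a point $F(s_0e_1)$'' already presupposes that $V_*$ meets $H_0$ at an affine point. On a one-sheeted hyperboloid every generator is affine, but exactly one $V$-generator is \emph{parallel} to $H_0$; for that generator your covering argument is silent, so ``no affine generator is omitted'' is not established, and merely observing that this parallel generator is not a $V_s$ is not yet a contradiction. The fix is short once seen: if $V_*\parallel H_0$ and $V_*\neq V_s$ for all $s$, pick any $t\neq 0$; since distinct generators of the $H$-regulus on a hyperboloid have distinct directions, $H_t$ is not parallel to $V_*$, so $H_t\cap V_*$ is a single affine point lying in $\bigcup_{t'} H_{t'}=\operatorname{im}F=\bigcup_s V_s$, yet $V_*$ is disjoint from every $V_s$ (same regulus) --- a contradiction. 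With this patch your argument is complete.
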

\begin{proof}
Again, by translating $F$ we may assume that $F(0)=0\in\R^{n}$.
Also, by considering $A^{-1}\circ F$ for an invertible linear $A$
we may assume that the image of $F$ is contained in $\sp\{e_{1},e_{2},e_{3}\}$
which we identify with $\R^{3}$ (see Fact \ref{fact:dimensions}).

It will be useful to denote by $\L_{F}(e_{i})=\{F(l):l\in\L(e_{i})\}$
the family of lines which are images of the lines in $\L(e_{i})$
under $F$. Note that for each $i$ this is a non-intersecting family
of lines whose union is the image of $F$. Assume first, that there
exist two different lines $l_{1}$, $l_{2}$ in $\L(e_{1},e_{2})$
that $F$ maps to parallel lines. Since they do not intersect, they
both belong to one family $\L(e_{i})$, so assume without loss of
generality that $l_{1}$ and $l_{2}$ are in $\L(e_{1})$. Then, any
line in $\L_{F}(e_{2})$ must intersect both $F(l_{1})$ and $F(l_{2})$,
and so they all lie on one affine plane. Since $\L_{F}(e_{2})$ is
the image of $F$, it follows that it is contained in a two dimensional
affine subspace. In Lemma \ref{lem:PlaneToPlane2directionsColl},
it was shown that in such a case $F$ is of the form (\ref{eq:2to-n-2directions})
with $u_{3}=0$.

We move on to the second case, in which no two lines in $\L_{F}(e_{1},e_{2})$
are parallel. This implies that $F(e_{1}+e_{2})\not\in\sp\{F(e_{1}),F(e_{2})\}$,
and so we may pick an invertible linear transformation $B\in GL_{3}(\R)$
such that $BF(e_{2})=e_{2}$ $BF(e_{1})\in\sp\{e_{1}\}$, and $BF(e_{1}+e_{2})\in e_{2}+\sp\{e_{1}+e_{3}\}$.
Then 
\[
BF(\sp\{e_{1}\})=\sp\{e_{1}\},\;BF(\sp\{e_{2}\})=\sp\{e_{2}\},
\]
and 
\[
BF(e_{2}+\sp\{e_{1}\})=e_{2}+\sp\{e_{1}+e_{3}\}.
\]
Without loss of generality we assume that $F$ itself satisfies the
above.

Choose any $a\neq0,1$ in $\R$. As $ae_{2}$ is in the image of $F$,
we may consider the line $l\in\L_{F}(e_{1})$ emanating from it. Denote
its corresponding line, $F^{-1}(l)\in\L(e_{1})$, by $l'$. Since
every point on the line $l'$ intersects a line, parallel to $e_{2}$,
connecting a point in $\sp\{e_{1}\}$ and a point in $e_{2}+\sp\{e_{1}\}$,
then also (after applying $F$) any point of $l$ lies on a line connecting
a point in $\sp\{e_{1}\}$ and a point in $e_{2}+\sp\{e_{1}+e_{3}\}$.
The union of all such lines includes all of $\R^{3}$ except for two
parallel planes (without the two relevant lines): the $XZ$-plane
and its translation by $e_{2}$. Indeed, lines connecting $\sp\{e_{1}\}$
and $e_{2}+\sp\{e_{1}+e_{3}\}$ consist of points of the form 
\begin{equation}
\lambda(xe_{1})+(1-\lambda)(ye_{1}+e_{2}+ye_{3}),\;\lambda,x,y\in\R,\label{eq:middleLine}
\end{equation}
and so do not include points of the form $(xe_{1}+ze_{3})$ with $z\neq0$
nor points of the form $(xe_{1}+e_{2}+ze_{3})$ with $z\neq x$. Since
$l$ is a line of the form 
\[
\{ae_{2}+t(a_{1}e_{2}+a_{2}e_{2}+a_{3}e_{3}):\;t\in\R\}
\]
it follows that $a_{2}=0$, $a_{3}\neq0$ and $a_{1}\neq a_{3}$,
so that it will neither intersect these planes nor be parallel to
one of the lines $\sp\{e_{1}\},\sp\{e_{1}+e_{3}\}$. 

By composing $F$ from the left with the invertible linear transformation
{[}in coordinates of $\R^{3}$ corresponding to $x=\left(x_{1},x_{2},x_{3}\right)^{T}=x_{1}e_{1}+x_{2}e_{2}+x_{3}e_{3}${]}
\[
{\displaystyle A=\begin{pmatrix}\frac{1-a}{a_{1}-a_{3}} &  & 0 &  & \frac{a}{a_{3}}-\frac{1-a}{a_{1}-a_{3}}\\
\\
0 &  & 1 &  & 0\\
\\
0 &  & 0 &  & \frac{a}{a_{3}}
\end{pmatrix}}
\]
we may assume without loss of generality that $(a_{1}e_{1}+a_{3}e_{3})=(e_{1}+ae_{3})$
(as still $A(\sp\{e_{1}\})=\sp\{e_{1}\}$, $Ae_{2}=e_{2}$, $A(\sp\{e_{1}+e_{3}\})=\sp\{e_{1}+e_{3}\}$,
and $A(a_{1}e_{1}+a_{3}e_{3})=(e_{1}+ae_{3})$, and so all of our
assumptions so far still hold). Furthermore, we may also assume without
loss of generality that $Fe_{1}=e_{1}$ 
by composing with an additional diagonal matrix of the form 
\[
D=\begin{pmatrix}\alpha\\
 & 1\\
 &  & \alpha
\end{pmatrix}
\]
where the coefficient $\alpha\in\R$ is defined by $F(e_{1})=\alpha^{-1}e_{1}$.

Summarizing the above, we have $F(0)=0$, $F(e_{1})=e_{1}$, $F(e_{2})=e_{2}$,
$F(e_{2}+\sp\{e_{1}\})=e_{2}+\sp\{e_{1}+e_{3}\}$ and $F(l)=ae_{2}+\sp\{e_{1}+ae_{3}\}$.
Let us check which lines belong to $\L_{F}(e_{2})$. Since each one
of them intersects the three aforementioned lines and since we have
that 
\[
ae_{2}+t(e_{1}+ae_{3})=(1-a)(te_{1})+a(te_{1}+e_{2}+te_{3}),
\]
it follows that from every point $te_{1}$, emanates the line $\{te_{1}+s(e_{2}+te_{3})\}_{s\in\R}$
in $\L_{F}(e_{2})$. In other words, the image of the map $F$ consists
of points of the form 
\[
\{te_{1}+se_{2}+tse_{3}:t\in\R,s\in\R\}.
\]
It is easily checked that from every point on this surface there emanate
exactly two straight lines, the line $\{te_{1}+s(e_{2}+te_{3})\}_{s\in\R}$
which we saw is in $\L_{F}(e_{2})$, and the line $\{se_{2}+t(e_{1}+se_{3})\}_{t\in\R}$
which must thus belong to $\L_{F}(e_{1})$.


Since $F$ is injective and maps lines in $\L(e_{1},e_{2})$ onto
lines, there exist bijective functions $f,g:\R\rightarrow\R$ satisfying
the equations $F(xe_{1})=f(x)e_{1}$ and $F(xe_{2})=g(x)e_{2}$ for
all $x\in\R$. By our assumptions, we have $f(0)=g(0)=0$ and $f(1)=g(1)=1$. 

We conclude that for any fixed $t_{0},s_{0}\in\R$, we have for all
$s\in\R$
\[
F(t_{0},s)\in f(t_{0})e_{1}+\sp\{e_{2}+f(t_{0})e_{3}\},
\]
and for all $t\in\R$ 
\[
F(t,s_{0})\in g(s_{0})e_{2}+\sp\{e_{1}+g(s_{0})e_{3}\}.
\]
Since the intersection of any two lines in $\L(e_{1},e_{2})$ is mapped
to the intersection of the their images, we get that 
\[
F(t_{0},s_{0})=f(t_{0})e_{1}+g(s_{0})e_{2}+f(t_{0})g(s_{0})e_{3}
\]
as required. \end{proof}
\begin{rem}
As in Lemma \ref{lem:PlaneToPlane2directionsColl}, we note that by
the same reasoning, Theorem \ref{thm:General2Dcoll} holds for any
field $\F\neq\Z_{2}$. In $\Z_{2}$, the only reason the proof does
not hold is that we could not have chosen an element $a\neq0,1$.
In other words, the proof requires at least three parallel lines in
each family, which in the case of $\Z_{2}$ do not exist.
\end{rem}

\begin{rem}
\label{rem:doubly} Note that the image of $F$ is no other than a
linear image of the well known hyperbolic-paraboloid $\{(x,y,xy)^{T}:x\in\R,y\in\R\}$.
In terms of surfaces, it is known that up to a linear image, only
two non-planar surfaces exist in $\R^{3}$ which are ``doubly ruled'',
which means that they have two essentially different parameterizations
as a disjoint union of lines. One of these surfaces is the hyperbolic-paraboloid,
and the second is the rotational hyperboloid, which can be parameterized,
say, as $\{\cos(s)-t\sin(s),\sin(s)+t\cos(s),t)^{T}\}$. While the
image of an $F$ satisfying the conditions of Theorem \ref{thm:General2Dcoll}
is automatically doubly ruled, it is not true in general, as the last
example shows, that every doubly ruled surface can be parameterized
in such a way that gives lines whenever one of the parameters is kept
constant.

Theorem \ref{thm:General2Dcoll} can be deduced in a relatively simple
manner from the so-called ``determination of doubly ruled surfaces
in $\R^{3}$''. For a proof of this theorem see for example \cite{Kri99}.
Still, we chose to give the direct proof above to make the exposition
self contained.

We remark that Alexandrov in his proof of the fact that the only isomorphisms
which preserve light-cone structure invariant are affine, used the
aforementioned characterization in a similar way to that in which
the authors proved a cone-isomorphism result in \cite{ArtsteinSlomka2011}.


In the context of doubly ruled surfaces, it makes sense to ask, for
example, whether $\R^{3}$ can be parameterized in a non-linear way
so that it is ``triply ruled'', (the answer being ``yes'', simply
take 
\[
F(s,t,r)=\left(\begin{array}{c}
s\\
t\\
st-r
\end{array}\right)
\]
which spans all of $\R^{3}$ bijectively). ``Triply ruled'' here
can mean only the more restricted definition namely that there is
a parametrization in which, fixing any two parameters, the third one
induces a line, since clearly infinitely many different parameterizations
of $\R^{3}$ as a union of lines exist.

When speaking of $n$-ruled surfaces in $\R^{m}$ for general $n,m$,
one can also use the less restrictive definitions, either of there
being $n$ different (possibly generic) lines through every point
in the surface, or of the existence of $n$ essentially different
parameterizations of the surface as a union of lines. To the best
of our knowledge, such surfaces have not been characterized for $n\neq2$.
In the next section we show that these ``parameterized $n$-ruled
surfaces'', that is, images of bijective ``$\L(\{e_{i}\}_{i=1}^{n})$-collineations''
of $\R^{n}$ into $\R^{m}$, must be of a very restrictive polynomial
form.
\end{rem}

\subsection{Collineations in $n$ directions - A polynomial form}

\label{sec: Poly-form}

In this section, we prove Theorem \ref{thm:poly-form}. We show that
the special form of plane collineations which was given in Theorem
\ref{thm:General2Dcoll}, carries over to higher dimensions by induction.
Note that indeed, the given form in Theorem \ref{thm:poly-form} is
precisely the form given in Theorem \ref{thm:General2Dcoll} where
$u_{1}=u_{(1,0)},u_{2}=u_{(0,1)},u_{3}=u_{(1,1)}$. We shall not provide
sufficient conditions which ensure the map is injective. 
\begin{proof}
[Proof of Theorem \ref{thm:poly-form}] By composing $F$ with the
linear transformationt taking each $e_{i}$ to $v_{i}$ we may assume
without loss of generality that $v_{i}=e_{i}$. It will be convenient
for us to use coordinates in the following. Namely, $F\left(x_{1},x_{2},\dots,x_{n}\right):=F\left(\sum_{i=1}^{n}x_{i}e_{i}\right)$.
The proof goes by induction on $n$, the case $n=2$ settled already
in Theorem \ref{thm:General2Dcoll}. We assume our claim holds for
$(n-1)$ and prove it for $n$. The induction hypothesis, applied
for the function of $(n-1)$ variables $F(\cdot,\ldots,\cdot,x_{j},\cdot,\ldots,\cdot)$
with $j^{th}$ coordinate fixed to be equal $x_{j}$ implies that
\begin{equation}
F(x_{1},x_{2},...,x_{n})=\sum_{\underset{\delta_{j}=0}{\delta\in\{0,1\}^{n}}}u_{\delta}(x_{j})\prod_{k\neq j}f_{k,x_{j}}^{\delta_{k}}(x_{k}).\label{eq:xj-fixed}
\end{equation}
A-priori, the bijections $f_{k,x_{j}}$ can depend on the value of
$x_{j}$, as may the vector coefficients $u_{\delta}$, including
the coefficient $u_{0}(x_{j})=F(x_{j}e_{j})$ (here $0=(0,\ldots,0)$).

Our aim is to first show that $f_{k,x_{j}}$ does not depend on $x_{j}$,
and then to show that there is another bijection $f_{j}:\R\to\R$,
such that all the coefficients $u_{\delta}$ depends in an affine
way on $f_{j}(x_{j})$. This would complete the proof.

Similarly to (\ref{eq:xj-fixed}), fixing a different variable $x_{i}$
($i\neq j$) we may write $F$ as 
\begin{equation}
F(x_{1},x_{2},...,x_{n})=\sum_{\underset{\delta_{i}=0}{\delta\in\{0,1\}^{n}}}v_{\delta}(x_{i})\prod_{k\neq i}g_{k,x_{i}}^{\delta_{k}}(x_{k}).\label{eq:xi-fixed}
\end{equation}
Let us introduce some index-simplifying notation: let $\delta^{j}\in\{0,1\}^{n}$
denote the vector with value $1$ in the $j^{th}$ entry, and value
$0$ in all other entries, and for $i\neq j$, let $\delta^{i,j}=\delta^{i}+\delta^{j}$.
Denote 
\[
F_{i,j}(a,b)=F(ae_{j}+be_{i}).
\]

Computing $F(x_{j}e_{j})$ using the two representations (\ref{eq:xj-fixed})
and (\ref{eq:xi-fixed}) 
we get 
\begin{equation}
v_{0}(0)+v_{\delta^{j}}(0)g_{j,0}(x_{j})=u_{0}(x_{j})\label{eq:u-delta-0}
\end{equation}
and similarly, computing $F(x_{i}e_{i})$ we get 
\begin{equation}
u_{0}(0)+u_{\delta^{i}}(0)f_{i,0}(x_{i})=v_{0}(x_{i}).\label{eq:d-delta-0}
\end{equation}
Let $p\neq i,j$ be any other index. From the two representations
of $F(x_{p}e_{p})$ we have 
\begin{equation}
u_{0}(0)+u_{\delta^{p}}(0)f_{p,0}(x_{p})=v_{0}(0)+v_{\delta^{p}}(0)g_{p,0}(x_{p})\label{eq:fgp0}
\end{equation}
so that by setting $x_{p}=1$ and using that $u_{0}(0)=v_{0}(0)=F(0,0,...,0)$
we get $v_{\delta^{p}}(0)=u_{\delta^{p}}(0)$. Note that $u_{\delta^{p}}(0)\neq0$,
otherwise $F(x_{p}e_{p})$ would be independent of $x_{p}$, which
is a contradiction to the injectivity of $F$ (similarly, $v_{\delta^{p}}(0)\neq0$).
Using equation (\ref{eq:fgp0}) once more (subtracting $F(0)$ and
canceling $u_{\delta^{p}}(0)$) we get that $f_{p,0}(x_{p})=g_{p,0}(x_{p})$,
which holds for all $p\neq i,j$. Writing $F_{p,i}(x_{p},x_{i})$
in our two forms, we get 
\begin{align}
F_{p,i}(x_{p},x_{i})=u_{0}(0)+u_{\delta^{i}}(0)f_{i,0}(x_{i})+u_{\delta^{p}}(0)f_{p,0}(x_{p})+u_{\delta^{i,p}}(0)f_{i,0}(x_{i})f_{p,0}(x_{p})\label{eq:Fip-xj}
\end{align}
and 
\begin{equation}
F_{p,i}(x_{p},x_{i})=v_{0}(x_{i})+v_{\delta^{p}}(x_{i})g_{p,x_{i}}(x_{p})\underset{(\ref{eq:d-delta-0})}{=}u_{0}(0)+u_{\delta^{i}}(0)f_{i,0}(x_{i})+v_{\delta^{p}}(x_{i})g_{p,x_{i}}(x_{p})\label{eq:Fip-xi}
\end{equation}
Comparing these two equations yields 
\begin{equation}
v_{\delta^{p}}(x_{i})g_{p,x_{i}}(x_{p})=u_{\delta^{p}}(0)f_{p,0}(x_{p})+u_{\delta^{i,p}}(0)f_{i,0}(x_{i})f_{p,0}(x_{p})\label{eq:d-delta-p}
\end{equation}
and by plugging in $x_{p}=1$ we get 
\begin{equation}
v_{\delta^{p}}(x_{i})=u_{\delta^{p}}(0)+u_{\delta^{i,p}}(0)f_{i,0}(x_{i})\label{eq:d-delta-p-xi}
\end{equation}
and we already see that the dependence of $v_{\delta^{p}}(x_{i})$
on $x_{i}$ is affine-linear in $f_{i,0}(x_{i})$.

Rearranging equation (\ref{eq:d-delta-p}) we have 
\[
g_{p,x_{i}}(x_{p})v_{\delta^{p}}(x_{i})=f_{p,0}(x_{p})\left[u_{\delta^{p}}(0)+u_{\delta^{i,p}}(0)f_{i,0}(x_{i})\right]
\]
and plugging equation (\ref{eq:d-delta-p-xi}) into it (recall that
$v_{\delta^{p}}(x_{i})\neq0$) we get $g_{p,x_{i}}(x_{p})=f_{p,0}(x_{p})$
and in particular, $g_{p,x_{i}}$ is independent of $x_{i}$. Similarly,
we get $f_{p,x_{j}}(x_{p})=g_{p,0}(x_{p})$ and so $f_{p}:=f_{p,0}=f_{p,x_{j}}=g_{p,x_{i}}$
for every $p\neq i,j$.

Clearly the indices $i,j$ are not special, so one can repeat the
considerations and compare the first representation (for a fixed $x_{j}$)
with a different representation, for a fixed $x_{l}$ (with $l\neq i,j$).
In that case then we would get that also $f_{i,x_{j}}$ is independent
of $x_{j}$ (and similarly, $g_{j,x_{i}}$ is independent of $x_{i}$).
We denote $f_{i}:=f_{i,0}=f_{i,x_{j}}$ and $g_{j}:=g_{j,0}=g_{j,x_{i}}$.

So, going back to our two representations, we have 
\begin{equation}
F(x_{1},x_{2},...,x_{n})=\sum_{\underset{\delta_{j}=0}{\delta\in\{0,1\}^{n}}}u_{\delta}(x_{j})\left[\prod_{k\neq i,j}f_{k}^{\delta_{k}}(x_{k})\right]f_{i}^{\delta_{i}}(x_{i}),\label{eq:F-xj-independent}
\end{equation}
and 
\begin{equation}
F(x_{1},x_{2},...,x_{n})=\sum_{\underset{\delta_{i}=0}{\delta\in\{0,1\}^{n}}}v_{\delta}(x_{i})\left[\prod_{k\neq i,j}f_{k}^{\delta_{k}}(x_{k})\right]g_{j}^{\delta_{j}}(x_{j}).\label{eq:F-xi-independent}
\end{equation}

Next, we show that each coefficient $u_{\delta}(x_{j})$ in representation
(\ref{eq:F-xj-independent}) depends in an affine way on $g_{j}(x_{j})$,
that is 
\begin{equation}
u_{\delta}(x_{j})=w_{\delta}+y_{\delta}g_{j}(x_{j}).\label{eq:u_delta}
\end{equation}
for some $w_{\delta}$ and $y_{\delta}$. 
This is done by induction on the number of $``1"$ entries in $\delta$,
where the induction base is given in (\ref{eq:u-delta-0}). Assume
$u_{\delta}(x_{j})$ has the required form for $\delta$ with no more
than $N$ non-zero entries. Set $N+1$ coordinates $p_{1},p_{2},...,p_{N+1}$
(all different from $j$) and let $\hat{\delta}=\delta^{p_{1},p_{2},...,p_{N+1}}=\sum_{k=1}^{N+1}\delta^{p_{k}}$.
We will show that $u_{\hat{\delta}}(x_{j})$ depends in an affine
way on $g_{j}(x_{j})$. For $F(x_{j}e_{j}+\sum_{k=1}^{N+1}e_{p_{k}})$,
the representation given in (\ref{eq:F-xi-independent}) gives us
an expression of the form $w_{1}+w_{2}g_{j}(x_{j})$. Comparing with
the representation given in (\ref{eq:F-xj-independent}), we have

\[
\sum_{\substack{\delta\in\{0,1\}^{n},\\
\delta_{k}=0,\;\forall k\neq p_{1},\ldots p_{N+1}
}
}u_{\delta}(x_{j})=w_{1}+w_{2}g_{j}(x_{j})
\]
where the sum in the left hand side is over indices $\delta$ all
of which have at most $N$ non-zero entries except $\hat{\delta}$.
Rearranging terms and using the induction hypothesis we get 
\[
u_{\hat{\delta}}(x_{j})=\hat{v}_{1}+\hat{v}_{2}g_{j}(x_{j})
\]
for some vectors $\hat{v}_{1},\hat{v}_{2}\in\R^{n}$, as required.

Plugging equation (\ref{eq:u_delta}) into \eqref{eq:F-xj-independent}
and denoting $g_{j}=f_{j}$, $w_{\delta}=u_{\delta}$ and $y_{\delta}=u_{\delta+\delta^{j}}$,
we get the form of equation (\ref{eq:n-web-rep}) for dimension $n$,
as claimed.

Next we show that $m\ge n$. To this end, consider the injective polynomial
mapping 
\[
\widetilde{F}(x_{1},\dots,x_{n})=F(f_{1}^{-1}(x_{1}),\dots,f_{n}^{-1}(x_{n})).
\]
Such a map must satisfy that $m\ge n$. This follows, for example,
from a result of A. Bia\l ynicki-Birula and M. Rosenlicht \cite{Bial-BirRosen61}
which states that an injective polynomial mapping $P:\R^{n}\to\R^{n}$
must also be surjective (in the complex case, the same result was
proved a few years later and is well-known as the Ax-Grothendieck
theorm). As a consequence, one easily verifies that there exist no
injective polynomials from $\R^{n}$ into $\R^{m}$ with $m<n$. Indeed,
suppose that $P:\R^{n}\to\R^{m}$ is an injective polynomial mapping,
with $m<n$. Without loss of generality $m=n-1$. Set $x_{n}=0$.
The map $P(x_{1},x_{2},\dots,x_{n-1},0):\R^{n-1}\to\R^{n-1}$ is an
injective polynomial mapping, and hence it is surjective, a contradiction
to the fact that $P(x_{1},\dots,x_{n})$ is injective.
\end{proof}

\subsection{Adding an $\left(n+1\right)^{{\rm th}}$ direction\label{sec: n+1-directions}}

In this section we consider injections that map lines in given $n+1$
generic directions onto lines. As shown in Example \ref{exm:R3},
we cannot deduce without additional assumptions that such mappings
are affine-additive for $n\ge3$. However, using the extra direction
in which lines are mapped onto lines, we are able to describe further
restrictions on the possible polynomial form of these mappings, as
given in Theorem \ref{thm:Poly_n+1}.

For the proof of the Theorem \ref{thm:Poly_n+1}, we will need the
following lemma concerning bijections of the real line $\R$ (which
is valid over a general field).
\begin{lem}
\label{lem:Scalar-bijection-additive} Let $f:\R\to\R$ be a bijective
function with $f(0)=0$ and $f(1)=1$. Assume there is a function
$G:\R\to\R$ so that 
\[
\frac{f(a+b)-f(b)}{f(a)}=G(b)
\]
for every $a\neq0$ and every $b\in\R$. Then, $f$ is additive.\end{lem}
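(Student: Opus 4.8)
The plan is to work from the functional equation
\[
\frac{f(a+b)-f(b)}{f(a)}=G(b)
\]
by exploiting its symmetry in a cleverly chosen way. First I would observe that the left-hand side, when multiplied out, gives $f(a+b)-f(b)=f(a)G(b)$, so that $f(a+b)=f(b)+f(a)G(b)$ for all $a\neq 0$ and all $b$. The goal is to show $G\equiv 1$, since that immediately yields $f(a+b)=f(a)+f(b)$, i.e.\ additivity. The natural first step is to pin down $G$ at the normalized points: setting $b=0$ gives $f(a)=f(a)G(0)$ for all $a\neq 0$, and since $f$ is a bijection with $f(0)=0$ we have $f(a)\neq 0$ for $a\neq 0$, forcing $G(0)=1$. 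Setting $a=1$ gives $f(1+b)=f(b)+G(b)$, which expresses $G$ concretely as $G(b)=f(b+1)-f(b)$.

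The heart of the argument is to use the symmetry of $f(a+b)$ under swapping the roles of $a$ and $b$. For $a,b\neq 0$ with $a+b$ arbitrary, the relation $f(a+b)=f(b)+f(a)G(b)$ holds, and by interchanging $a$ and $b$ (both nonzero) we also have $f(a+b)=f(a)+f(b)G(a)$. Subtracting these two expressions gives the key identity
\[
f(a)G(b)-f(a)=f(b)G(a)-f(b),
\]
that is, $f(a)\bigl(G(b)-1\bigr)=f(b)\bigl(G(a)-1\bigr)$ for all nonzero $a,b$. This says that the ratio $\bigl(G(a)-1\bigr)/f(a)$ is constant on $a\neq 0$; call this constant $c$, so that $G(a)=1+c\,f(a)$ for every $a\neq 0$ (and this also holds at $a=0$ since both sides equal $1$). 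I expect this reduction to be the cleanest route, because it converts the unknown auxiliary function $G$ into an explicit affine expression in $f$ with a single undetermined scalar $c$.

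It then remains to show $c=0$. Substituting $G(a)=1+c\,f(a)$ back into the original relation yields
\[
f(a+b)=f(b)+f(a)\bigl(1+c\,f(b)\bigr)=f(a)+f(b)+c\,f(a)f(b)
\]
for all nonzero $a,b$, and one checks the remaining cases (one of $a,b$ zero) hold trivially. This is a well-known multiplicative-type Cauchy relation: the map $h(x)=1+c\,f(x)$ would be multiplicative, $h(a+b)=h(a)h(b)$. I would then use the normalization $f(1)=1$ together with the bijectivity of $f$ to force $c=0$. Concretely, to rule out $c\neq 0$ I would note that $f(a+b)=f(a)+f(b)+c\,f(a)f(b)$ means the shifted function $1+c f$ never vanishes on the image, yet if $c\neq 0$ then $f$ would have to omit the value $-1/c$ in order for $h=1+cf$ to stay nonzero (since a multiplicative bijection can take the value $0$ only if its domain does), contradicting surjectivity of $f$ onto $\R$. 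The main obstacle I anticipate is precisely this final step: ensuring the elimination of $c$ is airtight using only $f(0)=0$, $f(1)=1$, and bijectivity, without implicitly assuming continuity or monotonicity; the resolution should come from the fact that $h=1+cf$ is forced to be a bijection of $\R$ avoiding $0$, which is impossible for a surjective $f$ when $c\neq 0$.
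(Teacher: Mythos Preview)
Your proposal is correct and follows essentially the same route as the paper: both symmetrize to obtain $f(a)\bigl(G(b)-1\bigr)=f(b)\bigl(G(a)-1\bigr)$ (the paper writes $H=G-1$), deduce $G(a)-1=c\,f(a)$ for a constant $c$, and then eliminate $c\neq 0$ via bijectivity. The only cosmetic difference is in the final step: the paper picks $b$ with $f(b)=-1/c$ so that $G(b)=0$ and hence $f(a+b)=f(b)$ for all $a\neq 0$, contradicting injectivity directly, whereas you recast this as the exponential Cauchy relation $h(a+b)=h(a)h(b)$ for $h=1+cf$ and note that $h$ would be a bijection of $\R$ yet could never vanish---the same contradiction in slightly different clothing.
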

\begin{proof}
First, we rewrite the equation as 
\[
\frac{f(a+b)-f(b)-f(a)}{f(a)}=H(b),
\]
that is, 
\[
f(a+b)=f(a)+f(b)+f(a)H(b).
\]
From symmetry we get that (for $b\neq0$) 
\[
f(a+b)=f(a)+f(b)+f(b)H(a),
\]
and thus 
\[
f(a)H(b)=f(b)H(a)
\]
for $a,b\neq0$. This means that 
\[
\frac{H(a)}{f(a)}=\frac{H(b)}{f(b)}
\]
for $a,b\neq0$ which means this is a constant function, say $\alpha$,
so that $H(a)=\alpha f(a)$ for all $a\neq0$. Since $f(1)=1$, $\alpha=H(1)$.
We want to show that $H(1)=0$. Indeed, if $H(1)\neq0$ then for $b=f^{-1}(\frac{-1}{H(1)})$
we get $H(b)=-1$, which means that $f(a+b)=f(b)$, a contradiction
to the injectivity of $f$. So, $H(1)=0$, hence $H\equiv0$ and so
$f$ is additive.
\end{proof}

\begin{proof}
[Proof of Theorem \ref{thm:Poly_n+1}]By Fact \ref{fact:linear},
we may assume without loss of generality that $v_{i}=e_{i}$ for all
$i\in\left\{ 1,\dots,n\right\} $ and $v_{n+1}=v=\sum_{i=1}^{n}e_{i}$.
 By Theorem \ref{thm:poly-form}, $F$ has a representation 
\begin{equation}
F(x)=\sum_{\delta\in\{0,1\}^{n}}u_{\delta}\prod_{i=1}^{n}g_{i}^{\delta_{i}}(x_{i}).\label{eq:ProofOfMainReduced-rep}
\end{equation}
Our first goal is to show that $g_{i}$ are additive. To this end,
consider $A$ which maps $e_{i}$ to itself for $i=1,\ldots,n-1$
and $e_{n}$ to $v$. Then also $F\circ A$ has such a representation,
so that 
\[
F(x)=\sum_{\delta\in\{0,1\}^{n}}v_{\delta}\prod_{i=1}^{n}f_{i}^{\delta_{i}}((A^{-1}x)_{i})
\]
and as $(A^{-1}x)_{i}=x_{i}-x_{n}$ for $i=1,\dots,n-1$ and $(A^{-1}x)_{n}=x_{n}$
we get that 
\begin{equation}
\sum_{\delta\in\{0,1\}^{n}}u_{\delta}\prod_{i=1}^{n}g_{i}^{\delta_{i}}(x_{i})=\sum_{\delta\in\{0,1\}^{n}}v_{\delta}f_{n}^{\delta_{n}}(x_{n})\prod_{i=1}^{n-1}f_{i}^{\delta_{i}}(x_{i}-x_{n}).\label{eq:Additive-to-Identical}
\end{equation}
Plugging in all variables equal to $0$, we see that $v_{0}=u_{0}$,
and so we may assume that they are both equal to $0$. Plugging in
all variables but one equal to $0$ we see that $g_{j}=f_{j}$ for
$j=1,\ldots,n-1$, and that $u_{\delta^{j}}=v_{\delta^{j}}$ (recall
that $\delta^{j}$ is defined so that $\delta_{j}^{j}=1$ and $\delta_{i}^{j}=0$
for $i\neq j$). We also get that 
\[
u_{\delta^{n}}g_{n}(x_{n})=\sum_{\delta\in\{0,1\}^{n}}v_{\delta}f_{n}^{\delta_{n}}(x_{n})\prod_{i=1}^{n-1}g_{i}^{\delta_{i}}(-x_{n}).
\]
Next plug in only $x_{1},x_{n}\neq0$, using $f_{1}=g_{1}$ and the
notation $\delta^{1,n}=\delta^{1}+\delta^{n}$, to get 
\begin{equation}
v_{\delta^{1}}g_{1}(x_{1})+u_{\delta^{n}}g_{n}(x_{n})+u_{\delta^{1,n}}g_{1}(x_{1})g_{n}(x_{n})=\sum_{\delta\in\{0,1\}^{n}}v_{\delta}g_{1}(x_{1}-x_{n})^{\delta_{1}}f_{n}^{\delta_{n}}(x_{n})\prod_{i=2}^{n-1}g_{i}^{\delta_{i}}(-x_{n}).\label{eq:non-vanishing-expression}
\end{equation}
Using the previous equation we get, 
\[
v_{\delta^{1}}g_{1}(x_{1})+u_{\delta^{1,n}}g_{1}(x_{1})g_{n}(x_{n})=\sum_{\delta\in\{0,1\}^{n}}v_{\delta}[g_{1}(x_{1}-x_{n})^{\delta_{1}}-g_{1}(-x_{n})^{\delta_{1}}]f_{n}^{\delta_{n}}(x_{n})\prod_{i=2}^{n-1}g_{i}^{\delta_{i}}(-x_{n})
\]
which can be further reduced to 
\[
g_{1}(x_{1})[v_{\delta^{1}}+u_{\delta^{1,n}}g_{n}(x_{n})]=[g_{1}(x_{1}-x_{n})-g_{1}(-x_{n})]\sum_{\substack{\delta\in\{0,1\}^{n-1},\\
\delta_{1}=1
}
}v_{\delta}f_{n}^{\delta_{n}}(x_{n})\prod_{i=2}^{n-1}g_{i}^{\delta_{i}}(-x_{n})
\]
For non-zero $x_{1}$, divide by $g_{1}(x_{1})$ to get that 
\[
[v_{\delta^{1}}+u_{\delta^{1,n}}g_{n}(x_{n})]=\frac{[g_{1}(x_{1}-x_{n})-g_{1}(-x_{n})]}{g_{1}(x_{1})}\sum_{\substack{\delta\in\{0,1\}^{n-1},\\
\delta_{1}=1
}
}v_{\delta}f_{n}^{\delta_{n}}(x_{n})\prod_{i=2}^{n-1}g_{i}^{\delta_{i}}(-x_{n}).
\]
It is important to note here, that $v_{\delta^{1}}+u_{\delta^{1,n}}g_{n}(x_{n})\neq0$
for every $x_{n}$ since otherwise $F\left(x_{1}e_{1}+x_{n}e_{n}\right)$
would attain the same value independently of $x_{1}$ (as can be seen
from the left hand side of (\ref{eq:non-vanishing-expression})).
We thus see, as the left hand side does not depend on $x_{1}$ and
is non-zero, that also the right hand side does not depend on $x_{1}$,
and thus
\[
\frac{g_{1}(a+b)-g_{1}(b)}{g_{1}(a)}=G(b),
\]
for every $a\neq0$ and $b\in\R$ (for some function $G:\R\to\R$).
By Lemma \ref{lem:Scalar-bijection-additive}, $g_{1}$ is additive.
Similarly, $g_{j}$ is seen to be additive for all $j=2,\ldots,n-1$.
In fact, from the symmetry of the assumptions, it follows that also
$g_{n}$ must be additive. This amounts to considering a different
linear mapping $A$. 

Our next goal is to show that $u_{\delta}=0$ for all $|\delta|\ge\frac{n+2}{2}$.
To this end, let us rewrite Eq. \eqref{eq:Additive-to-Identical}
over the field $\Q$. Since the functions $g_{i},f_{i}$ are additive
and $f_{i}(1)=g_{i}(1)=1$, it follows that over $\Q$ they are the
identity functions. Hence 
\begin{equation}
\sum_{\delta\in\{0,1\}^{n}}u_{\delta}\prod_{i=1}^{n}x_{i}^{\delta_{i}}=\sum_{\delta\in\{0,1\}^{n}}v_{\delta}\,x_{n}^{\delta_{n}}\prod_{i=1}^{n-1}\left(x_{i}-x_{n}\right)^{\delta_{i}}\label{eq:Q-Id-n+1}
\end{equation}
for all $x=(x_{1},\dots,x_{n})\in\Q^{n}$.

Set $2\le k\le n$. Set $x_{k-1}=x_{k}=\cdots=x_{n}$. Our equation
takes the form 
\[
\sum_{\delta\in\{0,1\}^{n}}u_{\delta}\,x_{1}^{\delta_{1}}\cdots\,x_{k-2}^{\delta_{k-2}}\cdot x_{n}^{\sum_{k-1}^{n}\delta_{i}}=\sum_{\substack{\delta\in\{0,1\}^{n},\\
\delta_{k-1}=\cdots=\delta_{n-1}=0
}
}v_{\delta}\,x_{n}^{\delta_{n}}\prod_{i=1}^{k-2}\left(x_{i}-x_{n}\right)^{\delta_{i}}
\]
The right hand side is a polynomial of degree not greater than $k-1$.
Thus, comparing the coefficients of $x_{1}\,x_{2}\cdots x_{k-2}\,x_{n}^{2}$
we get 
\[
\sum_{\substack{|\delta|=k,\\
\delta_{1}=\delta_{2}=\cdots=\delta_{k-2}=1
}
}u_{\delta}=0.
\]
By the symmetries of $F$, we conclude that for any fixed $k-2$ coordinates
$i_{1},i_{2},\dots,i_{k-2}$, we have that 
\begin{equation}
A^{k}(i_{1},\dots,i_{k-2}):=\sum_{\substack{|\delta|=k,\\
\delta_{i_{1}}=\dots=\delta_{i_{k-2}}=1
}
}u_{\delta}=0.\label{eq:u_Zero_sums}
\end{equation}
For a given $2\le k\le n$, fix $l\le k-2$ coordinates (or none)
$i_{1},\dots,i_{l}$. Define 
\[
A^{k}(i_{1},\dots,i_{l}):=\sum_{\substack{|\delta|=k,\\
\delta_{i_{1}}=\cdots=\delta_{i_{l}}=1
}
}u_{\delta},\;\;\;A^{k}:=\sum_{|\delta|=k}u_{\delta}.
\]
There are $\binom{n-l}{k-2-l}$ choices for $(k-2)$ distinct indices
containing $i_{1},\dots,i_{l}$. All possible choices for the complement
$k-2-l$ indices are enumerated by $\{i_{l+1}^{p},\dots,i_{k-2}^{p}\}_{p=1,\dots,{\binom{n-l}{k-2-l}}}$.
Thus, \eqref{eq:u_Zero_sums} implies that 
\begin{equation}
A^{k}(i_{1},\dots,i_{l})=\frac{1}{\binom{k-l}{k-2-l}}\sum_{p=1}^{\binom{n-l}{k-2-l}}A^{k}(i_{1},\dots,i_{l},i_{l+1}^{p},\dots,i_{k-2}^{p})=0.\label{eq:u_Zero_sums2-1}
\end{equation}
Therefore, by the inclusion-exclusion principle, \eqref{eq:u_Zero_sums2-1}
implies that 
\begin{align*}
u_{\delta^{i_{1},\dots,i_{k}}}=A^{k}-\sum_{j=k+1}^{n}A^{k}(i_{j})+\sum_{\substack{j,l=k+1\\
j<l
}
}^{n}A^{k}(i_{j},i_{l})+\cdots+\left(-1\right)^{n-k}A^{k}(i_{k+1},\dots,i_{n})=0
\end{align*}
for every permutation $\{i_{1},\dots,i_{n}\}$ of $\{1,\dots,n\}$.
Concluding the above, we have that $u_{\delta}=0$ for every $\delta\in\{0,1\}^{n}$
with $|\delta|\ge\frac{n+2}{2}$.

Note that we have used the fact that $n-k\le k-2$ since $A^{k}\left(i_{1},\dots,i_{l}\right)$
is defined only for $l\le k-2$. Indeed, one cannot expect the conclusion
to hold without this assumption since the number of elements in the
set $\{u_{\delta}\;:\;|\delta|=k\}$ is $\binom{n}{k}$ whereas the
number of equations for this set, given in \eqref{eq:u_Zero_sums}
is $\binom{n}{k-2}$. Clearly, as long as $\binom{n}{k-2}<\binom{n}{k}$,
one may always find non-trivial solutions for these equations. However,
it is easily checked that $\binom{n}{k-2}\ge\binom{n}{k}$ if and
only if $k\ge\frac{n+2}{2}$, and in this case, we have that $n-k\le k-2$.
\end{proof}

\begin{example}
\label{rem:n+1:Sharpness} As mentioned in the introduction, Theorem
\ref{thm:Poly_n+1} is sharp in the sense that the degree of an injective
polynomial collineation in $n+1$ generic directions in $\R^{n}$
may be as high as $n/2$. Indeed, let us construct such a polynomial.
For simplicity, let us construct a polynomial $F:\R^{2n}\to\R^{2n}$
for some even dimension $2n$. Consider the following map: 
\[
F\left(x_{1},\dots,x_{2n}\right)=\left(x_{1},x_{2},\dots,x_{2n-1},x_{2n}+\sum_{\substack{\delta\in\left\{ 0,1\right\} ^{2n}\\
\left|\delta\right|=n
}
}\alpha_{n}\prod_{i=1}^{2n}x_{i}^{\delta_{i}}\right),
\]
where $\alpha_{\delta}\in\R$. Clearly, $F$ is an injective collineation
in directions $e_{1},\dots,e_{2n}$. 

Since $F\left(\left(c_{1},\dots,c_{2n}\right)+t\left(e_{1},\dots,e_{2n}\right)\right)$
is a polynomial in $t$, it is sufficient to choose $\alpha_{\delta}$
so that the coefficients of $t^{k}$ in the expansion of $F\left(\left(c_{1},\dots,c_{2n}\right)+t\left(e_{1},\dots,e_{2n}\right)\right)$
are all $0$, for every $2\le k\le n$ and any $\left(c_{1},\dots,c_{2n}\right)\in\R^{2n}$.
The resulting equation for a given $k$ is that the following sum
is equal to zero: the sum of all products of $n-k$ of the $c_{j}$'s,
with coefficients which are those $\alpha_{\delta}$ for which the
corresponding coefficients $\delta_{j}$ are one. That is, 
\[
c_{1}\cdots c_{n-k}\left(\sum_{\substack{\left|\delta\right|=n\\
\delta_{1}=\dots=\delta_{n-k}=1
}
}\alpha_{\delta}\right)+\dots+c_{k+n+1}\cdots c_{2n}\left(\sum_{\substack{\left|\delta\right|=n\\
\delta_{k+n+1}=\dots=\delta_{2n}=1
}
}\alpha_{\delta}\right)=0.
\]
Ensuring that each one of the above sums is equal to $0$ will conclude
our construction. In the last argument of the proof of Theorem \ref{thm:Poly_n+1},
we showed that this set of equations is equivalent to the set of equations
\[
\sum_{\substack{\left|\delta\right|=n\\
\delta_{i_{1}}=\dots=\delta_{i_{n-2}}=1
}
}\alpha_{\delta}=0,\,\,\,i_{1},\dots,i_{n-2}\in\left\{ 1,\dots,2n\right\} .
\]
As the number of variables $\alpha_{\delta}$ is ${2n \choose n}$,
which is greater than the number of equations ${2n \choose n-2}$,
it follows that there is a non-trivial choice of $\alpha_{\delta}$'s,
as claimed. 

In $\R^{4}$, the map 
\[
(x_{1},x_{2},x_{3},x_{4})\mapsto(x_{1},x_{2},x_{3},x_{4}-x_{2}x_{3}+x_{2}x_{4})
\]
is such a concrete construction.

For odd dimensions $n$, one may show that there is a similar construction
of such polynomials, but of degree $\lceil\frac{n-1}{2}\rceil$. It
is not clear whether there always exist such a polynomial of the maximal
degree (i.e., $\lceil\frac{n}{2}\rceil$) allowed in our theorem.
Proving such a statement, if true, should involve a non-trivial use
of the fact that the map is injective. Example \ref{exm:R3} confirms
this statement for $n=3$.  
\end{example}

\subsection{Low dimensional cases\label{sub:affine-additive}}

In this section we discuss the two and three dimensional cases in
which the map $F$ in Theorem \ref{thm:Poly_n+1} turns out to be
affine-additive. 

To ensure affine-additivity, one would need at least $n+{n \choose 2}$
directions in which lines are mapped onto lines. To see this, note
that just to have $u_{\delta}=0$ for $\left|\delta\right|=2$ in
the conclusion of Theorem \ref{thm:Poly_n+1}, requires additional
${n \choose 2}$ constraints on these coefficients, at the least.

For $n=2$, the conditions of Theorem \ref{thm:Poly_n+1} already
imply affine-additivity (see Theorem \ref{thm:FTAGPlane}). For $n=3$
we shall show that five $3-$independent directions suffice (see Theorem
\eqref{thm:NFTAG3D}).

\subsubsection{A fundamental theorem in the plane }

For $n=2$, the conclusion of Theorem \ref{thm:FTAGPlane} states
that $F$ is of the form 
\[
F\left(x_{1}v_{1}+x_{2}v_{2}\right)=u_{0}+f\left(x_{1}\right)u_{1}+g\left(x_{2}\right)u_{2}
\]
where $f,g$ are additive functions on $\R$, and $u_{i}\in\R^{2}$.
In other words, the following Theorem \ref{thm:Poly_n+1} is implied:
\begin{thm}
\label{thm:FTAGPlane}Let $n\ge2$, let $v_{1},v_{2},v_{3}\in\R^{2}$
be $2-$independent, and let $F:\R^{2}\to\R^{n}$ be injective. Assume
that $F$ maps each line in $\L\left(v_{1},v_{2},v_{3}\right)$ onto
a line. Then $F$ is affine-additive. \end{thm}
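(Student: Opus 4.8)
The plan is to apply Theorem \ref{thm:Poly_n+1} in the special case $n=2$ and read off what the vanishing conditions on the coefficients $u_\delta$ force. By Theorem \ref{thm:Poly_n+1}, after composing with suitable linear maps so that the three directions become $e_1,e_2,v=e_1+e_2$, the map has the form
\[
F(x_1 u_1 + x_2 u_2) = u_{(0,0)} + f_1(x_1) u_{(1,0)} + f_2(x_2) u_{(0,1)} + f_1(x_1) f_2(x_2) u_{(1,1)},
\]
where $f_1,f_2$ are additive bijections with $f_i(1)=1$, and the coefficients $u_\delta$ satisfy the stated conditions. The crucial observation is that for $n=2$, the threshold $\frac{n+2}{2} = 2$, so the condition ``$u_\delta = 0$ for all $|\delta|\ge \frac{n+2}{2}$'' says precisely that $u_\delta = 0$ whenever $|\delta|\ge 2$. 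The only multi-index of weight $\ge 2$ in $\{0,1\}^2$ is $\delta=(1,1)$, hence $u_{(1,1)}=0$.

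With $u_{(1,1)}=0$ the representation collapses to
\[
F(x_1 u_1 + x_2 u_2) = u_{(0,0)} + f_1(x_1) u_{(1,0)} + f_2(x_2) u_{(0,1)},
\]
which, writing $F(x) - F(0) = f_1(x_1) u_{(1,0)} + f_2(x_2) u_{(0,1)}$ in terms of the basis $u_1,u_2$ guaranteed by Theorem \ref{thm:Poly_n+1}, is exactly the desired affine-additive form with the additive bijections $f_1,f_2$. The second bullet in the conclusion of Theorem \ref{thm:Poly_n+1}, which constrains sums of the $u_\delta$ for $2 \le k < \frac{n+2}{2}$, is vacuous here since there are no integers $k$ in the empty range $[2, 2)$, so no further work is needed from that quarter.

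There is essentially no obstacle: the statement is a direct corollary of Theorem \ref{thm:Poly_n+1} applied with $n=2$, the entire content being the arithmetic fact that $\lceil \frac{n+2}{2}\rceil$ equals the minimal weight already at $|\delta|=2$, so that the degree-$(1,1)$ cross-term is killed. The only point requiring a line of justification is that the additive bijections $f_i$ with $f_i(1)=1$ supplied by Theorem \ref{thm:Poly_n+1} are exactly what the notion of affine-additivity requires, and that the vectors $u_{(1,0)},u_{(0,1)}$ are the images $w_1,w_2$ in the target. I would therefore present the proof as a short two- or three-line deduction: invoke Theorem \ref{thm:Poly_n+1}, note that the first vanishing condition forces $u_{(1,1)}=0$, and conclude that $F$ has the claimed affine-additive form.
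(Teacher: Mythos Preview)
Your proposal is correct and takes essentially the same approach as the paper: the paper derives Theorem~\ref{thm:FTAGPlane} directly from Theorem~\ref{thm:Poly_n+1} with $n=2$, noting that the vanishing condition then forces the cross-term $u_{(1,1)}$ to be zero, leaving the affine-additive form. Your explicit computation that $\frac{n+2}{2}=2$ kills $u_{(1,1)}$ and that the second bullet is vacuous is exactly the content of the paper's one-line deduction.
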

\begin{rem}
For the case of mappings from the plane to itself ($n=m=2$), a parallelism
condition is directly implied simply because any two lines in the
plane do not intersect if and only if they are parallel. Therefore,
in this case, Theorem \ref{thm:FTAGPlane} is an easy particular case
of Theorem \ref{thm:newFTAG.Parallelism-n+1dir}. 
\end{rem}

\subsubsection{A fundamental theorem in $\protect\R^{3}$}

In this section we deal with the three dimensional case, namely prove
Theorem \ref{thm:NFTAG3D}. In the proof we completely characterize
all forms of injective mappings taking lines onto lines in four $3-$independent
directions (see Remark \ref{rem:3D4directions}), and then show that
given one more direction in which lines are mapped onto lines, only
affine-additive forms are left (actually one line in this direction). 
\begin{proof}
[Proof of Theorem \ref{thm:NFTAG3D}]Without loss of generality, we
may assume that $F(0)=0$ and that $\{v_{1},v_{2},v_{3},v_{4}\}=\{e_{1},e_{2},e_{3},v\}$
where $v=e_{1}+e_{2}+e_{3}$. As $\{v_{1},v_{2},\dots,v_{5}\}$ is
$3$-independent, the direction $v_{5}$, in which $\{tv_{5}\}_{t\in\R}$
is mapped into a line, is of the form $u=e_{3}+ae_{1}+be_{2}$, with
$a,b\neq0,1$ and $a\neq b$. By Theorem \ref{thm:Poly_n+1} for this
particular case, $F$ is of the form 
\begin{align}
F(x_{1},x_{2},x_{3}) & =a_{1}f_{1}(x_{1})+a_{2}f_{2}(x_{2})+a_{3}f_{3}(x_{3})\label{eq:1stPolyFormR3}\\
\notag & +a_{4}f_{1}(x_{1})f_{2}(x_{2})+a_{5}f_{1}(x_{1})f_{3}(x_{3})+a_{6}f_{2}(x_{2})f_{3}(x_{3})
\end{align}
where $f_{1},f_{2},f_{3}:\R\to\R$ are additive bijections with $f_{i}(0)=0$
and $f_{i}(1)=1$, and $a_{1},a_{2},\dots,a_{6}\in\R^{3}$ with $a_{4}+a_{5}+a_{6}=0$.

Consider the mapping $\widetilde{F}(x_{1},x_{2},x_{3}):=F(f_{1}^{-1}(x_{1}),f_{2}^{-1}(x_{2}),f_{3}^{-1}(x_{3}))$.
Plugging the fact that $a_{4}+a_{5}+a_{6}=0$ into \eqref{eq:1stPolyFormR3}
yields that $\widetilde{F}$ is of the form 
\begin{equation}
\widetilde{F}(x_{1},x_{2},x_{3})=a_{1}x_{1}+a_{2}x_{2}+a_{3}x_{3}+a_{4}(x_{1}x_{2}-x_{2}x_{3})+a_{5}(x_{1}x_{3}-x_{2}x_{3}).\label{eq:TildePolyFormR3}
\end{equation}
Since $F$ is injective, it follows that $\widetilde{F}$ is also
injective. Denote $\widetilde{a}=f_{1}(a)$ and $\widetilde{b}=f_{2}(b)$
and note that since $F$ maps the line $\{t(a,b,1)\}_{t\in\R}$ into
a line and since $f_{1},f_{2},f_{3}$ are additive, it follows that
$\widetilde{F}$ maps all points in $\{N(\widetilde{a},\widetilde{b},1)\}_{N\in\N}$
into the same line. Also notice that $\widetilde{a},\widetilde{b}\neq0,1$,
which follows by the properties of $f_{1}$ and $f_{2}$. We continue
the proof by dividing into cases.

\noindent \textbf{Case 1}: $a_{4}=0$. Notice that in this case, $a_{1},a_{2},a_{3}$
are linearly independent, for otherwise we would have that $\tF(x_{1},x_{2},0)=\tF(0,0,x_{3})$
for some $x_{1},x_{2},x_{3}$, which would contradict the fact that
$\tF$ is injective. In particular, we may write $a_{5}=\alpha a_{1}+\beta a_{2}+\gamma a_{3}$
for some coefficients $\alpha,\beta,\gamma\in\R.$  Set $g\left(t\right)=f_{1}\left(t\right)-f_{2}\left(t\right)$.
Then for all $t\in\R$ we have that 
\begin{align*}
F\left(t,t,t\right) & =a_{1}\left[f_{1}\left(t\right)+\alpha f_{3}\left(t\right)g\left(t\right)\right]+a_{2}\left[f_{2}\left(t\right)+\beta f_{3}\left(t\right)g\left(t\right)\right]+a_{3}\left[f_{3}\left(t\right)+\gamma f_{3}\left(t\right)g\left(t\right)\right].
\end{align*}
Since $F$ maps the line $\{(t,t,t)\}_{t\in\R}$ into a line, which
passes through $F(0,0,0)=0$ and $F(1,1,1)=a_{1}+a_{2}+a_{3}$, it
follows that $F(t,t,t)\in\sp\{a_{1}+a_{2}+a_{3}\}$. Since $\left\{ a_{1},a_{2},a_{3}\right\} $
is a basis of $\R^{3}$, we may equate their coefficients in the above
formula for $F\left(t,t,t\right)$ and deduce that for all $t\in\R$,
$g\left(t\right)+\left(\alpha-\beta\right)f_{3}\left(t\right)g\left(t\right)=0,$
which by the injectivity of $f_{3}$ implies that $g\left(t\right)\equiv0$,
and hence $f_{1}\left(t\right)=f_{2}\left(t\right)=f_{3}\left(t\right)$
for all $t\in\R$. In particular, note that in this case $\tilde{a}\neq\tilde{b}$
as $a\neq b$ and $f_{1}=f_{2}$. We denote the identical functions
$f_{1},f_{2},f_{3}$ by $f$.

\noindent \textbf{Case 1.1}: $a_{5}=0$. In this case, the equation
$a_{4}+a_{5}+a_{6}=0$ implies that $a_{6}=0$ and so  \eqref{eq:1stPolyFormR3}
takes the form $F\left(x_{1},x_{2},x_{3}\right)=a_{1}f\left(x_{1}\right)+a_{2}f\left(x_{2}\right)+a_{3}f\left(x_{3}\right)$
which, in particular, implies that $F$ is additive since $f$ is
additive.

\noindent \textbf{Case 1.2}: $a_{5}\neq0$. Recall that $a_{5}=\alpha a_{1}+\beta a_{2}+\gamma a_{3}$,
due to which  \eqref{eq:TildePolyFormR3} takes the form 
\begin{align*}
\widetilde{F}(x_{1},x_{2},x_{3})=a_{1}[x_{1}+\alpha x_{3}(x_{1}-x_{2})]+a_{2}[x_{2}+\beta x_{3}(x_{1}-x_{2})]+a_{3}x_{3}[1+\gamma(x_{1}-x_{2})].
\end{align*}

\noindent Suppose $\gamma\neq0$. Note that $\widetilde{F}\left(0,\frac{1}{\gamma},1\right)\in\sp\left\{ a_{1},a_{2}\right\} $,
and therefore  
\[
\widetilde{F}(0,\frac{1}{\gamma},1)=\widetilde{F}(x_{1},x_{2},0)
\]
for some $x_{1},x_{2}\in\R$. This contradicts the fact that $\widetilde{F}$
is injective, and thus $\gamma=0$. Suppose $\alpha\neq\beta$. Then,
one may check that 
\[
\widetilde{F}\left(1,0,\frac{-1}{\alpha-\beta}\right)=\widetilde{F}\left(\frac{\alpha}{\alpha-\beta}+1,\frac{\beta}{\alpha-\beta},\frac{-1}{\alpha-\beta}\right)
\]
which contradicts the fact that $\tF$ is injective. Thus $\alpha=\beta$.
Note that $\alpha\neq0$ since we assumed the case $a_{5}\neq0$.

Since $\tF$ maps $\{N(\widetilde{a},\widetilde{b},1)\}_{N\in\N}$
into the line which passes through $\widetilde{F}(0,0,0)=0$ and $\widetilde{F}(\widetilde{a},\widetilde{b},1)$,
it follows that $\widetilde{F}(N\ta,N\tb,N)=\lambda(N)\widetilde{F}(\ta,\tb,1)$
for some $\lambda(N)$. However, 
\[
\widetilde{F}(N\ta,N\tb,N)=N\{a_{1}[\ta+\alpha N(\ta-\tb)]+a_{2}[\tb+\alpha N(\ta-\tb)]+a_{3}\}
\]
and so $\widetilde{F}(N\ta,N\tb,N)=\lambda(N)\widetilde{F}(\ta,\tb,1)$
implies $\ta=\tb$, a contradiction. \\

\noindent \textbf{Case 2}: $a_{4}\neq0$. Notice that in this case,
$\{a_{1},a_{2},a_{4}\}$ are linearly independent. Indeed, suppose
$a_{4}=\alpha a_{1}+\beta a_{2}$ with $\alpha,\beta$ not both $0$.
Suppose $\alpha\neq0$. Then 
\[
\tF(x_{1},x_{2},0)=a_{1}x_{1}(1+\alpha x_{2})+a_{2}x_{2}(1+\beta x_{1}).
\]
Note that $\widetilde{F}\left(1,-\frac{1}{\alpha},0\right)=\widetilde{F}\left(0,-\frac{\left(1+\beta\right)}{\alpha},0\right)$,
a contradiction to the fact that $\tF$ is injective. Similarly,
if $\beta\neq0$ one may find $x',x_{1}',x_{2}'$ so that $\tF(x_{1}',x_{2}',0)=\tF(x',0,0)$,
also a contradiction. Thus, we may write 
\[
a_{3}=\alpha_{3}a_{1}+\beta_{3}a_{2}+\gamma_{3}a_{4},\;\;a_{5}=\alpha_{5}a_{1}+\beta_{5}a_{2}+\gamma_{5}a_{4}
\]
for some $\alpha_{3},\beta_{3},\gamma_{3},\alpha_{5},\beta_{5},\gamma_{5}\in\R$.

\noindent \textbf{Case 2.1}: ${\sp\{a_{5}\}=\sp\{a_{4}\}}$. In this
case $\alpha_{5}=\beta_{5}=0$ and $\tF$ takes the form 
\[
\tF(x_{1},x_{2},x_{3})=a_{1}[x_{1}+\alpha_{3}x_{3}]+a_{2}[x_{2}+\beta_{3}x_{3}]+a_{4}[x_{1}x_{2}+\gamma_{5}x_{1}x_{3}-(1+\gamma_{5})x_{2}x_{3}+\gamma_{3}x_{3}].
\]
Let us check which restrictions are implied by the fact that $\widetilde{F}$
is injective. Suppose $\widetilde{F}(x_{1},x_{2},x_{3})=\widetilde{F}(y_{1},y_{2},y_{3})$.
We will use the fact that $\{a_{1},a_{2},a_{4}\}$ is a basis of $\R^{3}$,
to compare the values of $\tF$ in each of these coordinates separately.
Clearly, $x_{3}=y_{3}$ implies that $x_{1}=y_{1}$ and $x_{2}=y_{2}$.
Suppose $d:=x_{3}-y_{3}\neq0$.

The equations for the coefficients of $a_{1}$ and $a_{2}$ imply
that $x_{1}=y_{1}+\alpha_{3}d$ and $x_{2}=y_{2}+\beta_{3}d$, respectively.
Plugging these identities into the equation for the coefficients of
$a_{4}$, re-ordering the elements and eliminating a factor of $d\neq0$
yields the equation 
\begin{align*}
 & y_{1}(\beta_{3}-\gamma_{5})+y_{2}(\alpha_{3}+(1+\gamma_{5}))=\gamma_{3}-\alpha_{3}\beta_{3}d-\gamma_{5}\alpha_{3}x_{3}+(1+\gamma_{5})\beta_{3}x_{3}.
\end{align*}
The above equation implies that a necessary condition for the injectivity
of $F$ is that $\alpha_{3}=-(1+\gamma_{5})$ and $\beta_{3}=\gamma_{5}$.
Otherwise, we could find a solution for this equation. Thus, we are
left with the equation 
\begin{align*}
\gamma_{3}-\alpha_{3}\beta_{3}d-2\alpha_{3}\beta_{3}x_{3}=0.
\end{align*}
Clearly, since $x_{3}$ and $d\neq0$ are variables with no other
constraints, we must have that $\alpha_{3}\beta_{3}=0$ and $\gamma_{3}\neq0$
in order to have no solutions with $d\neq0$ for this equation. Summarizing
the above, in this case $\widetilde{F}$ is injective only if $\alpha_{3}=-(1+\gamma_{5})$,
$\beta_{3}=\gamma_{5}$, $\alpha_{3}\beta_{3}=0$ and $\gamma_{3}\neq0$.
In particular, $\alpha_{3}+\beta_{3}=-1$. Thus, either $\alpha_{3}=-1,\beta_{3}=0$
or $\beta_{3}=-1,\alpha_{3}=0$, and so $\widetilde{F}$ is either
of the form 
\begin{align*}
\widetilde{F}(x_{1},x_{2},x_{3}) & =a_{1}(x_{1}-x_{3})+a_{2}x_{2}+a_{4}(\gamma_{3}x_{3}+x_{2}(x_{1}-x_{3}))
\end{align*}
or of the form 
\begin{align*}
\widetilde{F}(x_{1},x_{2},x_{3}) & =a_{1}x_{1}+a_{2}(x_{2}-x_{3})+a_{4}(\gamma_{3}x_{3}+x_{1}(x_{2}-x_{3})).
\end{align*}
Note that the above two forms are the same up to a composition of
$\tF$, from both the left and the right, with the linear transformation
interchanging $a_{1}$ and $a_{2}$ (and fixing $a_{4}$). Under these
linear modifications, injectivity is preserved and the fifth direction
$(a,b,1)$ is interchanged with the direction $(b,a,1)$.

Suppose $\gamma_{3}=0$. Then, in the first form we would have that
$\tF(1,0,1)=\tF(0,0,0)$, and in the second form we would have that
$\tF(0,1,1)=\tF(0,0,0)$, which contradicts the fact that $\tF$ is
injective. Thus, $\gamma_{3}\neq0$. It is easy to verify that in
this case $\tF$ of the form above is injective, and so in order to
prove our theorem we need to invoke the fifth direction in which a
line is mapped into a line. As in Case 1, since $\tF$ maps all points
in $\{N(\ta,\tb,1)\}_{N\in\N}$ into the line that passes through
both $\tF(0,0,0)=0$ and $\tF(\ta,\tb,1)$, it follows that 
\[
\tF(N\ta,N\tb,N)=\lambda(N)\tF(\ta,\tb,1)
\]
for some $\lambda(N)$. However, in the first form we would have that
\[
\tF(N\ta,N\tb,N)=N[a_{1}(\ta-1)+a_{2}\tb+a_{4}(N\tb(\ta-1)+\gamma_{3})],
\]
and so $\tF(N\ta,N\tb,N)=\lambda(N)\tF(\ta,\tb,1)$ only if $\tb=0$
or $\ta=1$, which is impossible since $\widetilde{a},\widetilde{b}\neq0,1$.
By the linear connection of our two forms, we would get that in the
second form the points $\{N(\ta,\tb,1)\}_{N\in\N}$ are mapped into
a line only if $\ta=0$ or $\tb=1$, which is, again, impossible since
$\ta,\tb\neq0,1$, a contradiction.\\
%
\textbf{Case 2.2}: ${\sp\{a_{4}\}\neq\sp\{a_{5}\}}$. In this case,
we have that either $\alpha_{5}\neq0$ or $\beta_{5}\neq0$. By \eqref{eq:TildePolyFormR3},
$\widetilde{F}$ is of the form 
\begin{align*}
\tF(x_{1},x_{2},x_{3}) & =a_{1}(x_{1}+\alpha_{3}x_{3}+\alpha_{5}x_{3}(x_{1}-x_{2}))+a_{2}(x_{2}+\beta_{3}x_{3}+\beta_{5}x_{3}(x_{1}-x_{2}))\\
 & +a_{4}(x_{1}x_{2}+\gamma_{3}x_{3}+\gamma_{5}x_{1}x_{3}-(1+\gamma_{5})x_{2}x_{3}).
\end{align*}
We show that in this case, $F$ is not injective. By composing $\tF$
with the linear transformation interchanging $a_{1}$ and $a_{2}$
(and fixing $a_{4}$), we may assume without loss of generality that
$\beta_{5}\neq0$. We will show that there exist $x_{1},x_{2},x_{3}$
with $x_{3}\neq0$ such that $\tF(x_{1},x_{2},x_{3})\in\sp\{a_{1}\}$.
To find such points, denote $d=x_{1}-x_{2}$, and so we shall seek
a solution for the following system of equations: 
\begin{align}
 & x_{2}=-x_{3}\left(\beta_{5}d+\beta_{3}\right)\label{eq:2}\\
 & x_{3}\left(\gamma_{3}+\left(1+\gamma_{5}\right)d-x_{1}\right)=-x_{1}x_{2}\label{eq:3}\\
 & d=x_{1}-x_{2}\label{eq:4}
\end{align}
where the first two equations equate the coefficients of $a_{2}$
and $a_{4}$ to $0$, respectively. Plugging  \eqref{eq:2} into  \eqref{eq:3}
and dividing by $x_{3}\neq0$, we get 
\[
\gamma_{3}+(1+\gamma_{5})d-x_{1}=x_{1}(\beta_{5}d+\beta_{3}),
\]
and so 
\begin{equation}
x_{1}=\frac{\gamma_{3}+(1+\gamma_{5})d}{\beta_{5}d+\beta_{3}+1}.\label{eq:5}
\end{equation}
Plugging  \eqref{eq:2} and \eqref{eq:5} into  \eqref{eq:4} yields
\begin{equation}
\frac{\gamma_{3}(1+\gamma_{5})d}{\beta_{5}d+\beta_{3}+1}+x_{3}(\beta_{5}d+\beta_{3})=d.\label{eq:6}
\end{equation}
As $\beta_{5}\neq0$, we may choose $d\in\R$, say, large enough,
such that both  \eqref{eq:2} and  \eqref{eq:5} are well defined,
and such that \eqref{eq:6} holds for some $x_{3}\neq0$. Concluding
the above, we showed that there exist $x_{1},x_{2},x_{3}$ with $x_{3}\neq0$
such that $\tF(x_{1},x_{2},x_{3})=ca_{1}$ for some $c\in\R$. Thus,
$\tF(c,0,0)=\tF(x_{1},x_{2},x_{3}),$ a contradiction to the fact
that $\tF$ is injective. 
This completes the consideration of this case, and hence the proof
as well.\end{proof}
\begin{rem}
\label{rem:3D4directions}In this proof of Theorem \ref{thm:NFTAG3D}
we actually completely classify all possible forms of injective mappings
of $\R^{3}$ that map all lines in four direction in general position
onto lines. The proof shows that, up to obvious linear modifications
and compositions of the coordinates with bijective maps attaining
$0$ at $0$ and $1$ at $1$, such maps are either of the form 
\begin{align*}
F(x_{1},x_{2},x_{3}) & =(x_{1}+\alpha x_{3}(x_{1}-x_{2}),x_{2}+\alpha x_{3}(x_{1}-x_{2}),x_{3}),
\end{align*}
or of the form 
\begin{align*}
F(x_{1},x_{2},x_{3}) & =(x_{1}-x_{3},\,x_{2},\,\alpha x_{3}+x_{2}(x_{1}-x_{3})),
\end{align*}
where in both forms $\alpha\neq0$. We also point out that, in the
proof, only one specific line in the fifth direction, in which parallel
lines are mapped to lines, was needed for the proof, namely the line
trough the origin. 
\end{rem}

\subsection{An example for a sufficient set of directions in $\protect\R^{n}$}

In this section, we give an example for a finite set of directions
in $\R^{n}$ for which an injective collineation, in these directions,
must be affine-additive. Namely, consider the following set of $n+{n \choose 2}+1$
directions. 
\[
S=\left\{ e_{i}\,:\,i\in\left\{ 1,\dots,n\right\} \right\} \cup\left\{ e_{i}+e_{j}\,:\,i,j\in\left\{ 1,\dots,n\right\} \right\} \cup\left\{ e_{1}+\dots+e_{n}\right\} 
\]
We prove the following:
\begin{thm}
Let $F:\R^{n}\to\R^{n}$ be injective. Suppose that $F$ maps each
line in $\L\left(S\right)$ onto a line. Then $F$ is given by 
\[
F\left(x_{1},\dots,x_{n}\right)=\sum_{i=1}^{n}f\left(x_{i}\right)v_{i},
\]
for some additive bijection $f:\R\to\R$, and $v_{1},\dots,v_{n}\in\R^{n}$. \end{thm}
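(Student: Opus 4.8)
The plan is to combine the polynomial description furnished by Theorem \ref{thm:Poly_n+1} with the two‑dimensional fundamental theorem, Theorem \ref{thm:FTAGPlane}, applied to every coordinate $2$-plane. By Fact \ref{fact:trans} I may assume $F(0)=0$. First I would isolate, inside $S$, the $n+1$ directions $e_1,\dots,e_n,v$ with $v=e_1+\dots+e_n$; these are in general position, and all their lines lie in $\L(S)$. Hence Theorem \ref{thm:Poly_n+1} applies, and since $\sum e_i=v$ the basis prescribed by Remark \ref{rem:compose-A} is $e_1,\dots,e_n$ itself. Writing coordinates in this basis, I obtain additive bijections $f_1,\dots,f_n:\R\to\R$ with $f_i(1)=1$ and coefficients $u_\delta\in\R^n$ (with $u_0=F(0)=0$) such that
\[
F(x_1,\dots,x_n)=\sum_{\delta\in\{0,1\}^n}u_\delta\prod_{i=1}^n f_i^{\delta_i}(x_i).
\]

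The heart of the argument is to show that every coefficient $u_\delta$ with $|\delta|\ge2$ vanishes. Fix a pair $i<j$ and arbitrary reals $c_k$ for the remaining coordinates $k\neq i,j$. The map $(x_i,x_j)\mapsto F$ with the other coordinates frozen at $c_k$ is injective and sends every line in directions $e_i$, $e_j$, $e_i+e_j$ — all members of $\L(S)$ — onto a line; by Theorem \ref{thm:FTAGPlane} it is therefore affine-additive. Grouping the monomials in $f_i(x_i),f_j(x_j)$, this restriction equals
\[
C_{00}+C_{10}\,f_i(x_i)+C_{01}\,f_j(x_j)+C_{11}\,f_i(x_i)f_j(x_j),
\quad\text{where}\quad
C_{11}=\sum_{\delta:\,\delta_i=\delta_j=1}u_\delta\prod_{k\neq i,j}f_k^{\delta_k}(c_k).
\]
Since $f_i,f_j$ are \emph{additive}, affine-additivity forces $C_{11}=0$: expanding $f_i(x_i+y_i)f_j(x_j+y_j)$ and comparing with the Cauchy relation $G(p+q)=G(p)+G(q)-G(0)$ leaves precisely the cross term $C_{11}\big[f_i(x_i)f_j(y_j)+f_i(y_i)f_j(x_j)\big]$, which must vanish (take all arguments equal to $1$). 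As this holds for every choice of the $c_k$, substituting $c_k=f_k^{-1}(t_k)$ rewrites $C_{11}$ as a vanishing $\R^n$-valued combination of the distinct multilinear monomials $\prod_{k\neq i,j}t_k^{\delta_k}$, whose linear independence gives $u_\delta=0$ for every $\delta$ with $\delta_i=\delta_j=1$. Letting $i<j$ range over all pairs annihilates every $u_\delta$ with $|\delta|\ge2$, so that, setting $v_i:=u_{\delta^i}$ (here $\delta^i\in\{0,1\}^n$ has its unique nonzero entry in position $i$),
\[
F(x_1,\dots,x_n)=\sum_{i=1}^n v_i\,f_i(x_i).
\]

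It remains to identify the $f_i$. Injectivity of $F:\R^n\to\R^n$ forces $v_1,\dots,v_n$ to be linearly independent, hence a basis. I would then exploit the diagonal direction $v=e_1+\dots+e_n\in S$: the image of the line $\{(t,\dots,t):t\in\R\}$ is a line containing $F(0)=0$, hence a one-dimensional subspace $\sp\{q\}$, so $\sum_i f_i(t)v_i\in\sp\{q\}$ for all $t$. Expanding $q$ in the basis $\{v_i\}$ shows $(f_1(t),\dots,f_n(t))$ is a scalar multiple of a fixed vector; evaluating at $t=1$ with $f_i(1)=1$ forces that vector to be $(1,\dots,1)$, whence $f_1=\dots=f_n=:f$, an additive bijection. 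This yields $F(x_1,\dots,x_n)=\sum_{i=1}^n f(x_i)v_i$, as claimed.

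The main obstacle is the second paragraph: the planar restrictions must be used not merely to delete the pure degree-two terms but to kill \emph{all} surviving higher-degree coefficients at once. The two features that make this possible are the additivity of the $f_i$ (supplied by Theorem \ref{thm:Poly_n+1}, and essential for identifying the obstruction to affine-additivity with exactly the product coefficient $C_{11}$) and the freedom to vary the frozen coordinates $c_k$, which, via the bijectivity of the $f_k$, upgrades a single vanishing relation into the separate vanishing of each $u_\delta$.
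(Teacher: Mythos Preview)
Your argument is correct and takes a genuinely different route from the paper's. Both proofs begin the same way, invoking Theorem~\ref{thm:Poly_n+1} on the directions $e_1,\dots,e_n,v$ to obtain the polynomial form with additive $f_i$, and both finish the same way, using the line $\R v$ to force $f_1=\cdots=f_n$. The divergence is in the middle step. The paper argues \emph{geometrically} that $F$ preserves parallelism in each axis direction: for each pair $i,j$ it exhibits five coplanar lines in $\L(e_i,e_j,e_i+e_j)$ whose images, being lines with enough pairwise intersections, must again lie in a common plane, forcing $F(\R e_i)$ parallel to $F(e_j+\R e_i)$; an iteration over coordinates propagates this to arbitrary translates. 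Theorem~\ref{thm:newFTAG.Parallelism} then yields the diagonal form, and comparison with the polynomial form identifies the diagonal functions with the additive $f_i$. You bypass the parallelism mechanism entirely: you apply Theorem~\ref{thm:FTAGPlane} directly to each frozen $(e_i,e_j)$-slice, and---this is the key observation---since Theorem~\ref{thm:Poly_n+1} has already made the $f_i$ additive, affine-additivity of the slice isolates $C_{11}$ as the sole obstruction; letting the frozen coordinates vary and using bijectivity of the remaining $f_k$ then kills every $u_\delta$ with $|\delta|\ge2$ at once. Your approach is more algebraic and arguably cleaner, trading the pretty parallelism picture for a sharper use of the additivity already in hand; the paper's approach has the virtue of exhibiting parallelism preservation as an intermediate geometric fact of independent interest.
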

\begin{proof}
By Theorem \ref{thm:Poly_n+1}, $F$ is of the form 
\begin{equation}
F\left(x_{1},\dots,x_{n}\right)=\sum_{\delta\in\left\{ 0,1\right\} ^{n}}u_{\delta}\prod f_{i}\left(x_{i}\right)^{\delta_{i}}\label{eq:Binary_Poly}
\end{equation}
for some $u_{\delta}\in\R^{n}$ and additive bijections $f_{i}:\R\to\R$
with $f_{i}\left(1\right)=1$.

Next, we observe that $F$ maps parallel lines in $\L\left(e_{1},\dots,e_{n}\right)$
onto parallel lines. Indeed, we let $v=\alpha_{1}e_{1}++\alpha_{n}e_{n}\in\R^{n}$,
and show that $F\left(\R e_{1}\right)$ and $F\left(v+\R e_{1}\right)$
are parallel. Since $\R e_{1}$, $e_{2}+\R e_{1}$, $\R e_{2}$, $\R\left(e_{1}+e_{2}\right)$,
and $\frac{1}{2}e_{2}+\R\left(e_{1}+e_{2}\right)$ lie on one plane,
and are all mapped onto lines, under $F$, it follows that $F\left(\R e_{1}\right)$,
$F\left(e_{1}+e_{2}+\R e_{1}\right)$, $F\left(\R e_{2}\right)$,
$F\left(\R\left(e_{1}+e_{2}\right)\right)$, and $F\left(\frac{1}{2}e_{2}+\R\left(e_{1}+e_{2}\right)\right)$
also lie on one plane. In particular, it follows that $F\left(\R e_{1}\right)$
and $F\left(e_{1}+e_{2}+\R e_{1}\right)$ are parallel, and hence
$F\left(\R e_{1}\right)$ is parallel to $F\left(\alpha_{1}e_{1}+\alpha_{2}e_{2}+\R e_{1}\right)$.
Similarly, $\alpha_{1}e_{1}+\alpha_{2}e_{2}+\R e_{1}$, $e_{3}+\alpha_{1}e_{1}+\alpha_{2}e_{2}+\R e_{1}$,
$\R e_{3}$, $\alpha_{1}e_{1}+\alpha_{2}e_{2}+\R\left(e_{1}+e_{3}\right)$,
and $\frac{1}{2}e_{3}+\alpha_{1}e_{1}+\alpha_{2}e_{2}+\R\left(e_{1}+e_{3}\right)$
lie on one plane, which leads to the conclusion that $F\left(\alpha_{1}e_{1}+\alpha_{2}e_{2}+\alpha_{3}e_{3}+\R e_{1}\right)$
is parallel to $F\left(\alpha_{1}e_{1}+\alpha_{2}e_{2}+\R e_{1}\right)$,
and therefore parallel to $F\left(\R e_{1}\right)$. By applying the
above argument iteratively, we conclude that $F\left(v+\R e_{1}\right)$
and $F\left(\R e_{1}\right)$ are parallel. Similarly, $F\left(v+\R e_{i}\right)$
and $F\left(\R e_{i}\right)$ are parallel, for any $i\in\left\{ 1,\dots,n\right\} $.
Thus, we may apply Theorem \ref{thm:newFTAG.Parallelism}, which implies
that $F$ is of the form 
\[
F\left(x_{1},\dots,x_{n}\right)=u_{0}+g_{1}\left(x_{1}\right)u_{1}+\dots+g_{n}\left(x_{n}\right)u_{n}
\]
for some bijections $g_{i}:\R\to\R$, and some $u_{i}\in\R^{n}$.
By comparing the above form with the form given in \eqref{eq:Binary_Poly},
it follows that $g_{i}=f_{i}$ for all $i$, which means that $g_{i}$
are additive, and hence $F$ is affine-additive. 

Finally, we show that all the functions $f_{1},f_{2},\dots,f_{n}$
are identical. Since $f_{i}$ are all additive with $f_{i}\left(1\right)=1$,
we have on the one hand that for any rational number $q$, 
\[
F\left(q,q,\dots,q\right)=u_{0}+q\left(u_{1}+\dots+u_{n}\right).
\]
On the other hand, since $F$ maps the line $\R\left(e_{1}+\dots+e_{n}\right)$
onto a a line, it follows that for any $x\in\R^{n}$, $F\left(x,x,\dots,x\right)=u_{0}+f\left(x\right)u_{1}+\dots+f\left(x\right)u_{n}$
parallel to $u_{1}+\dots+u_{n}$. Therefore, we conclude that $f_{1}\left(x\right)=\dots=f_{n}\left(x\right)$,
as claimed. 
\end{proof}

\section{Fundamental theorems of projective geometry\label{sec:FTPG}}

\subsection{Projective point of view\label{sec:Proj}}

In his section we consider analogues of our results in the projective
space. It is natural to consider the projective space when discussing
maps which preserve lines.  In fact, the classical fundamental theorem,
as well as additional earlier results in the spirit of this note,
were originally formulated for the projective plane, as described
in Section \ref{sec:History}. In this section we discuss and prove
such analogous results in the projective setting.

Let us explain our projective framework. The projective space corresponding
to a linear space $E$ over $\R$ (or any other field), is denoted
by $P(E)$. Each point in $P(E)$ corresponds to a distinct one-dimensional
subspace of $E$. For $E=\R^{n}$, the projective space is often denoted
by $P(\R^{n})=\RP^{n-1}$. Since we would like to employ here the
main results of this note for $\R^{n}$, it will be useful for us
to use the following standard embedding of $\R^{n}$ into $\R P^{n}$:
\begin{equation}
\RP^{n}=\R^{n}\cup\RP^{n-1}.\label{eq:emRnPn}
\end{equation}

To make sense of this embedding, one can consider two copies of $\R^{n}$
in $\R^{n+1}$. One is the ``base\textquotedbl{}: $\R^{n}=\sp\{e_{1},...,e_{n}\}$,
and one ``affine'' copy, placed one unit above: $e_{n+1}+\R^{n}$.
Each line through the origin which lies in $\R^{n}$ corresponds to
a projective point in $\RP^{n-1}$, and each line which does not lie
in $\R^{n}$ intersects $e_{n+1}+\R^{n}$ at a distinct point corresponding
to a point in the affine copy of $\R^{n}$. In this way, $\RP^{n}$
is obtained as a compactification of the affine copy of $\R^{n}$,
by adding to it all directions at infinity represented by $\RP^{n-1}$.

Recall that in the case of $\R^{n}$, we usually assumed that parallel
lines (in certain directions) are mapped to not-necessarily-parallel
lines, which was a major difficulty. However, in the projective setting
this difficulty does not exist since the natural projective analogue
of a family of parallel lines in $\R^{n}$ is a family of projective
lines which intersect at a common projective point. To see this, take
a family of projective lines in $\RP^{n}$, which correspond to a
family of parallel lines in the affine copy of $\R^{n}$. As these
lines are all parallel, they must intersect at a single projective
point at infinity. Therefore, our geometric assumption will naturally
be that all projective lines passing through finitely many given projective
points are mapped onto projective lines.

The analogue in $\R P^{n}$ of affine (invertible) transformations
in $\R^{n}$ will be projective-linear transformations, denoted by
${\rm PGL_{n+1}}\left(\R\right)$, namely mappings of $\R P^{n}$
which are induced by (invertible) linear transformations of $\R^{n+1}$.
In particular, such transformations map projective lines onto projective
lines.

\subsection{Basic facts and preliminary results}

Recall that $n+2$ projective points $\bar{a}_{1},...,\bar{a}_{n+2}\in\RP^{n}$
are said to be in general position if any of their lifts $a_{1},\dots,a_{n+2}\in\R^{n+1}$
are in general position (see Section \ref{sec:Notations}). We will
have use of the following theorem (see e.g., \cite{Pras01}) and basic
facts regarding projective-linear transformations in $\R P^{n}$.
\begin{thm}
\label{thm:PrLiPo} Let $\bar{a}_{1},...,\bar{a}_{n+2}$ and $\bar{b}_{1},...,\bar{b}_{n+2}$
be two sets of points in general position in $\RP^{n}$. Then, there
exists a unique projective-linear transformation $f:\RP^{n}\to\RP^{n}$
such that $f(\bar{a}_{i})=b_{i}$ for $i=1,...,n+2$.\end{thm}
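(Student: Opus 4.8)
The plan is to establish this classical fact --- that $\GPL$ acts sharply transitively on ordered projective frames --- by the standard normalization-of-lifts argument, treating existence and uniqueness separately. Note that the conclusion should read $f(\bar a_i)=\bar b_i$.

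For existence, I would first exploit the general position hypothesis to normalize the lifts. Choose lifts $a_1,\dots,a_{n+2}\in\R^{n+1}$ of $\bar a_1,\dots,\bar a_{n+2}$. The first $n+1$ are linearly independent, hence form a basis, so I may write $a_{n+2}=\sum_{i=1}^{n+1}\lambda_i a_i$. The key observation is that every $\lambda_i\neq 0$: if some $\lambda_j$ vanished, then $a_{n+2}$ would lie in $\sp\{a_i:i\neq j\}$, contradicting the required linear independence of the $n+1$ lifts $\{a_i:i\neq j,\ 1\le i\le n+1\}\cup\{a_{n+2}\}$. Rescaling by $a_i'=\lambda_i a_i$ (still a lift of $\bar a_i$) yields $a_{n+2}'=\sum_{i=1}^{n+1}a_i'$. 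Performing the identical normalization on the $b$'s gives lifts with $b_{n+2}'=\sum_{i=1}^{n+1}b_i'$. I then define $L\in\GL_{n+1}(\R)$ by $L(a_i')=b_i'$ for $i=1,\dots,n+1$; it is invertible since it carries a basis to a basis, and by linearity $L(a_{n+2}')=\sum_{i=1}^{n+1}b_i'=b_{n+2}'$. The projective-linear map $f$ induced by $L$ then satisfies $f(\bar a_i)=\bar b_i$ for all $i=1,\dots,n+2$.

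For uniqueness, I would reduce to showing that any projective-linear map fixing $n+2$ points in general position is the identity: if $f,g$ both send the $\bar a_i$ to the $\bar b_i$, then $g^{-1}\circ f$ fixes every $\bar a_i$. So suppose $L\in\GL_{n+1}(\R)$ induces a map fixing $\bar a_1,\dots,\bar a_{n+2}$. Fixing $\bar a_i$ means $La_i=\mu_i a_i$ with $\mu_i\neq 0$, so each $a_i$ is an eigenvector. Using the normalized frame $a_{n+2}=\sum_{i=1}^{n+1}a_i$, I compute $La_{n+2}=\sum_{i=1}^{n+1}\mu_i a_i$, while fixing $\bar a_{n+2}$ forces $La_{n+2}=\mu_{n+2}\sum_{i=1}^{n+1}a_i$. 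Comparing coefficients against the basis $a_1,\dots,a_{n+1}$ yields $\mu_1=\cdots=\mu_{n+1}=\mu_{n+2}$, so $L$ is a scalar multiple of the identity and induces the identity projective transformation. Hence $g^{-1}\circ f=\mathrm{id}$ and $f=g$.

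The argument is essentially routine; the only step requiring genuine care is the normalization, where general position --- and not merely that the first $n+1$ lifts form a basis --- is exactly what guarantees that each coefficient $\lambda_i$ is nonzero, making the rescaling legitimate. This same normalized frame $a_{n+2}=\sum_{i=1}^{n+1}a_i$ drives both halves of the proof, so once it is in place the existence and uniqueness computations are immediate.
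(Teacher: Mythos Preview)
Your argument is correct and is the standard normalization-of-lifts proof of this classical fact. Note, however, that the paper does not actually prove this theorem: it is stated as a known result with a reference (``see e.g., \cite{Pras01}''), so there is no proof in the paper to compare against. Your write-up would serve perfectly well as a self-contained proof, and your observation that general position is precisely what forces every coefficient $\lambda_i$ to be nonzero is exactly the point on which the normalization hinges.
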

\begin{fact}
\label{fact:PrEi} Let $F:\RP^{n}\to\RP^{n}$ map $n+1$ points $\bar{p}_{1},...,\bar{p}_{n+1}\in\RP^{n}$
in general position to $n+1$ points $\bar{q}_{1},...,\bar{q}_{n+1}\in\RP^{n}$
in general position. Then we may assume without loss of generality
that $\bar{p}_{i}=\bar{e}_{i}$ and $\bar{q}_{i}=\bar{e}_{i}$ for
$i=1,...,n+1$ by composing $F$ with projective-linear transformations
from the left and from the right: $A\circ F\circ B$ (which exist
by Theorem \ref{thm:PrLiPo}).
\end{fact}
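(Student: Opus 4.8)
The plan is to produce the two projective-linear maps $A$ and $B$ by adjoining one extra point to each of the given $(n+1)$-point configurations, so that Theorem \ref{thm:PrLiPo} (which is stated for $n+2$ generic points) becomes directly applicable. First I would construct $B$. Fix lifts $p_{1},\dots,p_{n+1}\in\R^{n+1}$ of $\bar{p}_{1},\dots,\bar{p}_{n+1}$; since these projective points are generic, the lifts are linearly independent and hence form a basis of $\R^{n+1}$. Put $\bar{p}_{n+2}:=\overline{p_{1}+\dots+p_{n+1}}$, and recall the diagonal point $\bar{v}=\overline{e_{1}+\dots+e_{n+1}}$. A short linear-algebra check shows that both $\{\bar{e}_{1},\dots,\bar{e}_{n+1},\bar{v}\}$ and $\{\bar{p}_{1},\dots,\bar{p}_{n+1},\bar{p}_{n+2}\}$ are in general position: the only $(n+1)$-element subsets of lifts are obtained either by dropping the diagonal vector (leaving the basis itself) or by dropping one basis vector $e_{j}$ (resp. $p_{j}$) and keeping the sum of all of them, and in the latter case the resulting $n+1$ vectors are again linearly independent. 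Theorem \ref{thm:PrLiPo} then supplies a projective-linear $B$ with $B(\bar{e}_{i})=\bar{p}_{i}$ for $i=1,\dots,n+1$ (and $B(\bar{v})=\bar{p}_{n+2}$).

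Symmetrically, fixing lifts $q_{1},\dots,q_{n+1}$ of $\bar{q}_{1},\dots,\bar{q}_{n+1}$ and adjoining $\overline{q_{1}+\dots+q_{n+1}}$, the same reasoning together with Theorem \ref{thm:PrLiPo} yields a projective-linear $A$ with $A(\bar{q}_{i})=\bar{e}_{i}$ for $i=1,\dots,n+1$. Setting $\widetilde{F}=A\circ F\circ B$, I then simply compute
\[
\widetilde{F}(\bar{e}_{i})=A\bigl(F(B(\bar{e}_{i}))\bigr)=A\bigl(F(\bar{p}_{i})\bigr)=A(\bar{q}_{i})=\bar{e}_{i},\qquad i=1,\dots,n+1,
\]
so that after this pre- and post-composition the two configurations have both been normalized to the standard points $\bar{e}_{1},\dots,\bar{e}_{n+1}$.

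It remains to record that the substitution is harmless, which is what legitimizes the ``without loss of generality''. Since $A$ and $B$ are bijections of $\RP^{n}$ carrying projective lines onto projective lines, $\widetilde{F}$ is again injective and maps exactly those projective lines through the normalized points $\bar{e}_{i}$ onto projective lines; conversely, if $\widetilde{F}$ is eventually shown to be projective-linear, then so is $F=A^{-1}\circ\widetilde{F}\circ B^{-1}$, being a composition of projective-linear maps. The only point that genuinely requires care is the mismatch between the $n+1$ points we are handed and the $n+2$ points Theorem \ref{thm:PrLiPo} consumes; the entire substance of the argument is the verification that adjoining the diagonal point preserves general position, which is the elementary computation indicated above. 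Everything else is bookkeeping.
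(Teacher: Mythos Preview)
Your proposal is correct and follows the route the paper indicates (the parenthetical appeal to Theorem~\ref{thm:PrLiPo}); the paper states this as a bare fact without supplying any argument, and you have filled in precisely the detail it suppresses, namely the adjunction of the diagonal point to pass from $n+1$ to the $n+2$ generic points that Theorem~\ref{thm:PrLiPo} requires. One could shortcut the whole thing by observing that $n+1$ generic projective points lift to a basis of $\R^{n+1}$, so the desired $B$ (and likewise $A$) is simply the projectivization of the linear map sending one basis to the other---this avoids Theorem~\ref{thm:PrLiPo} entirely---but your approach is faithful to the paper's citation and is entirely sound.
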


\begin{fact}
\label{fact:PrScale} Let $F:\RP^{n}\to\RP^{n}$. Assume that the
affine copy of $\R^{n}$ (from the representation given in \eqref{eq:emRnPn})
is invariant under $F$, and denote its restriction this affine copy
by $F':\R^{n}\to\R^{n}$. Let $D\in GL_{n}$ be a diagonal matrix
in $\R^{n}$ and let $b\in\R^{n}$. Then there exists a projective-linear
transformation $\bar{A}\in\GPL$ such that the restriction $(\bar{A}F)'$
of $\bar{A}F$ to the affine copy of $\R^{n}$ satisfies $(\bar{A}F)'=DF'+b$.
Namely, the inducing linear transformation $A\in GL_{n+1}$ is 
\[
A=\left(\begin{array}{ccc|c}
 &  & \\
 & D &  & b\\
 &  & \\
\hline 0 & \cdots & 0 & 1
\end{array}\right)
\]

\end{fact}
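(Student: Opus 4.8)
The plan is to verify the statement by a direct computation in homogeneous coordinates, since once the block matrix $A$ is written down explicitly the assertion is simply an identification of $(\bar{A}F)'$. First I would fix the identification coming from \eqref{eq:emRnPn}: a point $x=(x_{1},\dots,x_{n})\in\R^{n}$ of the affine copy corresponds to the projective point whose lift is the column vector $(x_{1},\dots,x_{n},1)^{T}\in\R^{n+1}$, i.e.\ the unique lift normalized to have last coordinate $1$. With this convention, the hypothesis that the affine copy is invariant under $F$ says precisely that for every $x\in\R^{n}$ the image $F(\bar{x})$ again admits a lift with nonzero last coordinate, so we may write $F(\bar{x})=\overline{(F'(x),1)}$; this is exactly the map $F':\R^{n}\to\R^{n}$ in the statement.

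Next I would check that the displayed $A$ genuinely induces an element of $\GPL$ that keeps the affine copy invariant. Since $D\in\GL_{n}$ is invertible and diagonal, expanding the determinant along the last row gives $\det A=\det D\neq0$, so $A\in\GL_{n+1}(\R)$ and hence defines $\bar{A}\in\GPL$. Moreover, because the last row of $A$ is $(0,\dots,0,1)$, the last homogeneous coordinate of any vector is left unchanged by $A$; in particular a lift with nonzero last coordinate is sent to a lift with nonzero last coordinate, so $\bar{A}$ maps the affine copy of $\R^{n}$ into itself. Combined with the invariance of the affine copy under $F$, this guarantees that the affine copy is invariant under $\bar{A}F$, which is what makes the restriction $(\bar{A}F)'$ well defined in the first place.

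The core computation is then immediate: for $x\in\R^{n}$, applying $A$ to the normalized lift of $F(\bar{x})$ gives
\[
A\begin{pmatrix}F'(x)\\1\end{pmatrix}=\begin{pmatrix}D & b\\0 & 1\end{pmatrix}\begin{pmatrix}F'(x)\\1\end{pmatrix}=\begin{pmatrix}DF'(x)+b\\1\end{pmatrix},
\]
whose last coordinate is already $1$, so the affine point it represents is exactly $DF'(x)+b$. Hence $(\bar{A}F)'(x)=DF'(x)+b$ for all $x\in\R^{n}$, as claimed. I do not expect any genuine obstacle here beyond bookkeeping: the only two points requiring care are the normalization convention for lifts (so that ``restriction to the affine copy'' literally means reading off the first $n$ coordinates after the last one is set to $1$) and the observation that \emph{both} $F$ and $\bar{A}$ preserve the affine copy, which is precisely what renders the composite restriction meaningful.
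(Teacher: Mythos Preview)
Your verification is correct and is precisely the routine computation the paper leaves implicit: the Fact is stated without proof, the matrix $A$ is simply displayed, and the reader is expected to check exactly what you wrote---that $A$ is invertible, preserves the last homogeneous coordinate, and sends the normalized lift $(F'(x),1)^{T}$ to $(DF'(x)+b,1)^{T}$. There is nothing to compare; your proposal is the intended one-line check.
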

To prove Theorems \ref{thm:Pr+2} and \ref{thm:Pr+1} we will need
some preliminary results.
\begin{prop}
\label{prop:PrParll} Let $n\ge2$ and let an injective mapping $F:\RP^{n}\to\RP^{n}$
be given. Let $\bar{p}_{1},\ldots,\bar{p}_{m}\in\RP^{n}$ be $m\leq n+1$
generic points, and assume that $F$ maps any projective line passing
through one of the points $\bar{p}_{i}$ onto a projective line. Denote
$F(\bar{p}_{i})=\bar{q}_{i}$. Then $\{\bar{q}_{i}\}_{i=1}^{m}$ are
generic and 
\begin{equation}
F(\Psp\{\bar{p}_{i}\}_{i=1}^{m})=\Psp\{\bar{q}_{i}\}_{i=1}^{m}.\label{eq:PrParll}
\end{equation}
\end{prop}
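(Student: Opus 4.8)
The plan is to prove the statement by induction on $m$, establishing simultaneously that $\{\bar{q}_i\}_{i=1}^m$ are generic and that the projective span is preserved. For the base case $m=1$, the point $\bar{p}_1$ is mapped to $\bar{q}_1 = F(\bar{p}_1)$, and $\Psp\{\bar{p}_1\} = \{\bar{p}_1\}$, so \eqref{eq:PrParll} holds trivially while genericity of a single point is automatic. The inductive engine will rely on the key geometric observation that $\Psp\{\bar{p}_i\}_{i=1}^{m}$ is filled out by the projective lines joining $\bar{p}_m$ to the points of the lower-dimensional span $\Psp\{\bar{p}_i\}_{i=1}^{m-1}$. Since each such line passes through $\bar{p}_m$ (one of our distinguished points), it is mapped by $F$ onto a projective line.

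The main inductive step proceeds as follows. First I would assume the result for $m-1$, so that $\{\bar{q}_i\}_{i=1}^{m-1}$ are generic and $F(\Psp\{\bar{p}_i\}_{i=1}^{m-1}) = \Psp\{\bar{q}_i\}_{i=1}^{m-1}$. To locate $F$ on the larger span, take any point $\bar{x} \in \Psp\{\bar{p}_i\}_{i=1}^{m} \setminus (\Psp\{\bar{p}_i\}_{i=1}^{m-1} \cup \{\bar{p}_m\})$; the projective line $\ell$ through $\bar{x}$ and $\bar{p}_m$ meets $\Psp\{\bar{p}_i\}_{i=1}^{m-1}$ in a point $\bar{y}$. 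Because $\ell$ passes through $\bar{p}_m$, its image $F(\ell)$ is a projective line through $\bar{q}_m = F(\bar{p}_m)$ and through $F(\bar{y}) \in \Psp\{\bar{q}_i\}_{i=1}^{m-1}$. Hence $F(\bar{x})$ lies on the line joining $\bar{q}_m$ to a point of $\Psp\{\bar{q}_i\}_{i=1}^{m-1}$, which is contained in $\Psp\{\bar{q}_i\}_{i=1}^{m}$. Sweeping over all such $\bar{x}$ gives the inclusion $F(\Psp\{\bar{p}_i\}_{i=1}^{m}) \subseteq \Psp\{\bar{q}_i\}_{i=1}^{m}$, provided we first know the $\bar{q}_i$ are generic so that the right-hand span genuinely has the expected dimension $m-1$.

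The step I expect to be the main obstacle is establishing genericity of $\{\bar{q}_i\}_{i=1}^{m}$, that is, ruling out that $\bar{q}_m \in \Psp\{\bar{q}_i\}_{i=1}^{m-1}$. This is where injectivity of $F$ must do real work. Suppose for contradiction $\bar{q}_m \in \Psp\{\bar{q}_i\}_{i=1}^{m-1}$. Then the argument above collapses: for every $\bar{x}$ in the $(m-1)$-dimensional span $\Psp\{\bar{p}_i\}_{i=1}^m$, the image line $F(\ell)$ through $\bar{q}_m$ and $F(\bar{y})$ would lie entirely inside the $(m-2)$-dimensional subspace $\Psp\{\bar{q}_i\}_{i=1}^{m-1}$. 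Thus $F$ would map the full $(m-1)$-dimensional projective subspace $\Psp\{\bar{p}_i\}_{i=1}^{m}$ into the strictly lower-dimensional $\Psp\{\bar{q}_i\}_{i=1}^{m-1}$. The contradiction should come from a counting or dimension argument showing such a map cannot be injective: an injection cannot collapse a projective subspace of dimension $m-1$ into one of dimension $m-2$, since by the inductive span-preservation the restriction of $F$ to $\Psp\{\bar{p}_i\}_{i=1}^{m-1}$ already surjects onto all of $\Psp\{\bar{q}_i\}_{i=1}^{m-1}$, and the line through $\bar{p}_m$ would then be forced to hit this image set twice.

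Once genericity is secured, the reverse inclusion $\Psp\{\bar{q}_i\}_{i=1}^{m} \subseteq F(\Psp\{\bar{p}_i\}_{i=1}^{m})$ follows by reversing the roles: every point of $\Psp\{\bar{q}_i\}_{i=1}^{m}$ lies on a line through $\bar{q}_m$ meeting $\Psp\{\bar{q}_i\}_{i=1}^{m-1}$, and by the inductive hypothesis every such target line and anchor point is already in the image, so a surjectivity-of-the-line-map argument fills in the remaining points. Combining both inclusions yields the equality \eqref{eq:PrParll}, completing the induction. A clean way to package the dimension bookkeeping is to reduce, via Fact \ref{fact:PrEi}, to the normalized situation $\bar{p}_i = \bar{e}_i$, where the spans become coordinate subspaces and the genericity obstruction becomes transparent.
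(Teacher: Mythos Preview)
Your proposal is correct and follows essentially the same inductive approach as the paper: sweep out $\Psp\{\bar{p}_i\}_{i=1}^{m}$ by lines through $\bar{p}_m$ meeting the lower span, use the line-onto-line hypothesis, and run the same argument in reverse for the opposite inclusion. The one noteworthy difference is emphasis: you flag genericity of $\{\bar{q}_i\}_{i=1}^{m}$ as the ``main obstacle'' and sketch a dimension-collapse contradiction, whereas the paper dispatches it in one line \emph{before} the inclusions --- since by induction $F$ maps $\Psp\{\bar{p}_i\}_{i=1}^{m-1}$ \emph{onto} $\Psp\{\bar{q}_i\}_{i=1}^{m-1}$ and $\bar{p}_m$ lies outside the former, injectivity immediately forces $\bar{q}_m=F(\bar{p}_m)$ outside the latter. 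You do arrive at this same observation inside your contradiction argument, so nothing is missing; it is just stated more directly in the paper. Also note that the forward inclusion $F(\Psp\{\bar{p}_i\}_{i=1}^{m})\subseteq\Psp\{\bar{q}_i\}_{i=1}^{m}$ does not actually require genericity to make sense, since $\Psp\{\bar{q}_i\}_{i=1}^{m}$ is well-defined regardless. The normalization via Fact~\ref{fact:PrEi} you mention at the end is unnecessary here.
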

\begin{proof}
The proof is similar to the proof of Lemma \ref{lem:parallelism},
and goes by induction on $m$. The case $m=1$ is trivial. Assume
the lemma is true for $(m-1)$ and let generic $\bar{p}_{1},\ldots,\bar{p}_{m}\in\RP^{n}$
be given. Since by the induction hypothesis 
\[
F(\Psp\{\bar{p}_{i}\}_{i=1}^{m-1})=\Psp\{\bar{q}_{i}\}_{i=1}^{m-1},
\]
the injectivity assumption implies that $F(\bar{p}_{m})\not\in\Psp\{\bar{q}_{i}\}_{i=1}^{m-1}$,
and so $\{\bar{q_{i}}\}_{i=1}^{m}$ are generic.

Next, we prove the equality in (\ref{eq:PrParll}) by showing double
inclusion. For the inclusion of the L.H.S. in the R.H.S. in equation
(\ref{eq:PrParll}) let $\bar{x}\in\Psp\{p_{i}\}_{i=1}^{m}\setminus\left\{ \bar{p_{i}}\right\} _{i=1}^{m}$
and let $l$ be the projective line connecting $\bar{x}$ and $\bar{p}_{m}$.
Since $\left\{ \bar{p}_{i}\right\} _{i=1}^{m}$ are generic, there
exists a point $\bar{y}\in\Psp\{\bar{p}_{i}\}_{i=1}^{m-1}$ so that
$\bar{y}\in l$. By the induction hypothesis, $F(\bar{y})\in\Psp\{\bar{q}_{i}\}_{i=1}^{m-1}$.
Since $F(\bar{p}_{m})\in\Psp\{\bar{q}_{i}\}_{i=1}^{m}$, and $F$
maps every projective lines through $p_{m}$ onto a projective line,
it follows that $F(l)\subset\Psp\{\bar{q}_{i}\}_{i=1}^{m}$. In particular,
$F(\bar{x})\in\Psp\{\bar{q}_{i}\}_{i=1}^{m}$.

For the second inclusion, let $\bar{y}'\in\Psp\{\bar{q}_{i}\}_{i=1}^{m}\setminus\left\{ \bar{q}_{i}\right\} _{i=1}^{m}$
and let $l'$ be the projective line connecting $\bar{y}'$ and $\bar{q}_{m}$.
Then there exists a projective point $\bar{z}'\in\Psp\{\bar{q}_{i}\}_{i=1}^{m-1}$
so that $\bar{z}'\in l'$. By the induction hypothesis, there exists
a projective point $\bar{z}\in\Psp\{\bar{p}_{i}\}_{i=1}^{m-1}$ such
that $F(\bar{z})=\bar{z}'$. Thus, the projective line $l$ passing
through $\bar{p}_{m}$ and $\bar{z}$ is mapped onto a projective
line passing through $\bar{q_{m}}$ and $\bar{z}'$, namely $l'$.
In particular, there exists a projective point $\bar{y}\in\Psp\{\bar{p}_{i}\}_{i=1}^{m}$
for which $F(\bar{y})=\bar{y}'$.
\end{proof}
The following auxiliary lemmas, which are concerned with mappings
of $\R$ and $\R^{n}$, will allow us to bridge between results of
previous sections to the projective setting.
\begin{lem}
\label{lem:DStrMult} Let $n\geq2$, Let $F:\R^{n}\to\R^{n}$ be a
bijection satisfying that for every $x=(x_{1},x_{2},...,x_{n})^{T}\in\R^{n}$,
\[
F(x)=\sum_{i=1}^{n}f_{i}(x_{i})e_{i}
\]
where $f_{i}:\R\to\R$ are bijections with $f_{i}(0)=0$ and $f_{i}(1)=1$.
Assume that $F$ maps any line through $0$ into a line. Then, $f_{1}=f_{2}=\cdots=f_{n}$
and $f_{1}$ is multiplicative.\end{lem}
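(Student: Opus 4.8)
The plan is to feed into the hypothesis a well-chosen one-parameter family of lines through the origin, namely the lines $\R(e_{1}+c\,e_{i})$ for $i\in\{2,\dots,n\}$ and $c\in\R$, and to read off both conclusions at once from the resulting images. First I would record that $f_{i}(0)=0$ for all $i$ forces $F(0)=0$, so any line into which $F$ sends a line through the origin must itself pass through the origin.

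Next, fixing $i\in\{2,\dots,n\}$ and $c\in\R$, I would compute the image of $\{t(e_{1}+c\,e_{i}):t\in\R\}$. Since $f_{j}(0)=0$ for $j\neq 1,i$, this image equals $\{f_{1}(t)\,e_{1}+f_{i}(tc)\,e_{i}:t\in\R\}$ and lies in $\sp\{e_{1},e_{i}\}$; by hypothesis it is contained in a line $L$ through the origin. The key observation is that $f_{1}$ is a bijection, so its values $f_{1}(t)$ already exhaust the $e_{1}$-coordinate; hence $L$ cannot be the axis $\R e_{i}$ and must be of the form $\{s(e_{1}+m\,e_{i}):s\in\R\}$ for some slope $m$. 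Evaluating at $t=1$ and using $f_{1}(1)=1$ pins down $m=f_{i}(c)$, giving the identity
\[
f_{i}(tc)=f_{i}(c)\,f_{1}(t)\qquad(t\in\R).
\]

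From this single identity both claims follow. Putting $c=1$ and using $f_{i}(1)=1$ yields $f_{i}=f_{1}$ for every $i$, so all the coordinate functions coincide with one bijection $f:=f_{1}$; substituting $f_{i}=f_{1}=f$ back (say with $i=2$) turns the identity into $f(tc)=f(c)f(t)$ for all $t,c\in\R$, i.e.\ $f$ is multiplicative. I do not expect a genuine obstacle here: the computation is essentially forced, and the only delicate point is the nondegeneracy of $L$ --- ruling out that the image collapses onto the $\R e_{i}$ axis --- which is exactly where the surjectivity of $f_{1}$ (part of the bijectivity hypothesis) is essential.
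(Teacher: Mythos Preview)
Your argument is correct. The organization differs somewhat from the paper's: there the authors first feed in the single diagonal line $\sp\{e_{1}+\cdots+e_{n}\}$, which (together with $F(0)=0$ and $F(e_{1}+\cdots+e_{n})=e_{1}+\cdots+e_{n}$) immediately forces $f_{1}=\cdots=f_{n}=:f$, and only then establish multiplicativity by writing $F(t\alpha)=s(t)F(\alpha)$ for a generic $\alpha$ and arguing that the scalar $s(t)$ is independent of $\alpha$, hence equals $f(t)$. You instead work entirely inside coordinate $2$-planes with the one-parameter family $\R(e_{1}+c\,e_{i})$ and extract the single identity $f_{i}(tc)=f_{1}(t)f_{i}(c)$, from which both conclusions drop out at once; this is a bit more economical. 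One minor remark: to rule out $L=\R e_{i}$ you invoke surjectivity of $f_{1}$, but in fact the single value $f_{1}(1)=1\neq 0$ already suffices, so the bijectivity hypothesis is used only lightly at that step.
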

\begin{proof}
First, since $F$ maps the line passing through $e=e_{1}+e_{2}+\cdots+e_{n}$
and through the origin into a line, and since $F$ fixes the origin
and the point $e$, it follows that $F$ maps the line $\sp\{e\}$
into itself. Therefore, $f_{1}=f_{2}=\cdots=f_{n}$ and we denote
this function by $f$. Next, since lines through $0$ are mapped into
lines through $0$, it follows that for every $\alpha=(\alpha_{1},\alpha_{2},...,\alpha_{n})^{T}\in\R^{n}$
and every $t\in\R$ we have 
\[
\sum_{i=1}^{n}f(t\alpha_{i})e_{i}=F(t\alpha)=s(t)F(\alpha)=s(t)\sum_{i=1}^{n}f(\alpha_{i})e_{i}
\]
for some $s:\R\to\R$. Note that $s$ is independent of the choice
of $\alpha$. Indeed, choosing $\beta=(\beta_{1},\beta_{2},...,\beta_{n})^{T}$
with $\beta_{1}=\alpha_{1}\neq0$ we have 
\[
F(t\beta)=\sum_{i=1}^{n}f(t\beta_{i})e_{i}=g(t)\sum_{i=1}^{n}f(\beta_{i})e_{i}
\]
for some $g:\R\to\R$ which, as $s$, satisfies $g(t)f(\alpha_{1})=f(t\alpha_{1})=s(t)f(\alpha_{1})$
and so $g(t)=s(t)$. The fact that we could choose, for example, any
$\beta_{2}$ means that $f(t\beta_{2})=s(t)f(\beta_{2})$ for any
$t,\beta_{2}\in\R$. Plugging $\beta_{2}=1$ we get $s(t)=f(t)$ and
so $f$ is multiplicative.\end{proof}
\begin{lem}
\label{lem:2MultId} Let $f:\R\to\R$ be a multiplicative injection.
Assume $g(x)=f(x+1)-1$ is also multiplicative. Then $f(x)=x$ for
every $x\in\R$.\end{lem}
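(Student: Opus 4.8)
The plan is to distill, from the two multiplicativity hypotheses together with injectivity, a single functional equation and then to extract from it a unit-translation identity. First I would record the normalizations: from $f(xy)=f(x)f(y)$ and injectivity one gets $f(0)=0$, $f(1)=1$, and $f(-1)=-1$ (since $f(-1)^2=f(1)=1$ forces $f(-1)=\pm1$, and $f(-1)=1=f(1)$ would violate injectivity), so $f$ is odd; also $f(y)\neq0$ for $y\neq0$, as $f(y)f(1/y)=1$. The same reasoning applied to $g$ (again an injection, being $f$ pre- and post-composed with translations) gives $g(1)=1$, that is $f(2)=2$. Rewriting the multiplicativity $g(x)g(y)=g(xy)$ through $g(x)=f(x+1)-1$ and the substitution $u=x+1,\,v=y+1$ yields the master identity
\begin{equation}
(f(u)-1)(f(v)-1)+1=f(uv-u-v+2)\qquad\text{for all }u,v\in\R.\tag{$\star$}
\end{equation}

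Next I would mine $(\star)$ by specializing $v$. Taking $v=0$ gives $f(2-u)=2-f(u)$, and combining with oddness gives the step-two translation $f(u+2)=f(u)+2$. Taking $v=-1$ gives $f(3-2u)=3-2f(u)$, which oddness turns into $f(2u+3)=2f(u)+3$; subtracting the step-two relation then yields $f(2u+1)=2f(u)+1$. Together with $f(2u)=f(2)f(u)=2f(u)$ this is the heart of the matter: for an arbitrary $w\in\R$ set $u=w/2$, so that $f(w)=2f(w/2)$ and $f(w+1)=f\!\left(2\cdot\tfrac{w}{2}+1\right)=2f(w/2)+1=f(w)+1$. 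Thus $f(x+1)=f(x)+1$ for every real $x$. I expect this ``halving'' step — manufacturing the unit translation $f(x+1)=f(x)+1$ out of the two given multiplicative structures — to be the main obstacle; everything preceding it is bookkeeping and everything after it is classical.

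Finally I would upgrade the translation identity to full additivity and then to the identity map. For $y\neq0$, using $f(t/y)=f(t)/f(y)$ (legitimate since $f(y)\neq0$) together with $f(\cdot+1)=f(\cdot)+1$,
\[
\frac{f(x+y)}{f(y)}=f\!\left(\frac{x+y}{y}\right)=f\!\left(\frac{x}{y}\right)+1=\frac{f(x)+f(y)}{f(y)},
\]
so $f(x+y)=f(x)+f(y)$; the case $y=0$ follows from $f(0)=0$. Hence $f$ is an additive, multiplicative injection with $f(1)=1$, i.e. a field endomorphism of $\R$. Multiplicativity gives $f(t^{2})=f(t)^{2}\ge0$, so $f$ sends nonnegatives to nonnegatives, and additivity then makes $f$ monotone; since additivity and $f(1)=1$ force $f$ to be the identity on $\Q$, squeezing any real $x$ between rationals gives $f(x)=x$, as claimed.
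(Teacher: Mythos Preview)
Your proof is correct and takes a genuinely different route from the paper's. The paper exploits the multiplicativity of $f$ once to write $g(ab+a+b)=g(a)+g(b)+g(ab)$, substitutes $x=(b+1)a$, $y=b$ to obtain $\frac{g(x+y)-g(y)}{g(x)}=g\!\left(\frac{1}{1+y}\right)+g\!\left(\frac{y}{1+y}\right)$ for $y\neq-1$, and then reruns the symmetry argument of Lemma~\ref{lem:Scalar-bijection-additive} to force $g$ to be additive (with a separate treatment of the excluded value $y=-1$); additivity plus multiplicativity of $g$ then gives $g=\mathrm{id}$, hence $f=\mathrm{id}$. You instead specialize the same master identity at $v=0$ and $v=-1$ to extract the translation laws $f(u+2)=f(u)+2$ and $f(2u+1)=2f(u)+1$, combine them with $f(2)=2$ to get $f(x+1)=f(x)+1$ for all real $x$, and then use multiplicativity once more, via $f\!\left(\frac{x+y}{y}\right)=f\!\left(\frac{x}{y}\right)+1$, to promote this to full additivity of $f$ directly. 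Your argument is self-contained --- it does not invoke Lemma~\ref{lem:Scalar-bijection-additive} --- and sidesteps both the $y=-1$ special case and the somewhat delicate evaluation $H(1)=2g(1/2)-1=0$ in the paper's argument. The paper's route has the virtue of recycling earlier machinery; yours is more direct.
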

\begin{proof}
It is easy to check that the injectivity and multiplicativity of $f$
and $g$ imply that $f(0)=g(0)=0$, $f(\pm1)=g(\pm1)=\pm1$ and that
$f(-a)=-f(a)$ and $g(-a)=-g(a)$ for every $a\in\R$. 
Next, using the relation $g(x)+1=f(x+1)$ and the multiplicaitivity
of $f$ and $g$ we get that 
\[
g(ab+a+b)+1=f(ab+a+b+1)=f(a+1)f(b+1)=g(a)+g(b)+g(ab)+1
\]
for any $a,b\in\R$. Plugging in $y=b$ and $x=(b+1)a$ we get 
\[
g(x+y)=g(\frac{x}{y+1})+g(y)+g(\frac{x}{y+1}y)
\]
for every $x$ and every $y\neq-1$. Thus for any $x\neq0$ and $y\neq-1$
we have 
\[
\frac{g(x+y)-g(y)}{g(x)}=g(\frac{1}{1+y})+g(\frac{y}{1+y}).
\]
We would like to use Lemma \ref{lem:Scalar-bijection-additive} to
conclude that $g$ is additive, but we still have to deal with the
case $y=-1$. Repeating the idea of the proof of Lemma \ref{lem:Scalar-bijection-additive},
we have that for every $x,y\neq0,-1$  
\[
\frac{g(x+y)-g(y)-g\left(x\right)}{g(x)}=g(\frac{1}{1+y})+g(\frac{y}{1+y})-1=:H\left(y\right)
\]
and so $g\left(x+y\right)-g\left(y\right)-g\left(x\right)=g\left(x\right)H\left(y\right)$.
By interchanging the roles of $x$ and $y$, we obtain the equation
$g\left(x+y\right)-g\left(y\right)-g\left(x\right)=g\left(y\right)H\left(x\right)$,
and so for all $x,y\neq0,-1$ we have that 
\[
\frac{H\left(y\right)}{g\left(y\right)}=\frac{H\left(a\right)}{g\left(a\right)}.
\]
Thus, $H\left(x\right)=\alpha g\left(x\right)$ for some constant
$\alpha\in\R$ and for all $x\neq0,-1$. By plugging $x=1$, and using
the aforementioned properties of $g$ we conclude that
\[
\alpha=H\left(1\right)=g\left(1/2\right)+g\left(1/2\right)-1=\frac{1}{2}g\left(1\right)+\frac{1}{2}g\left(1\right)-1=0,
\]
and so $g\left(x+y\right)=g\left(x\right)+g\left(y\right)$ for all
$x,y\neq0,-1$. Since $g\left(0\right)=0$, we obviously have that
$g\left(a+0\right)=g\left(a\right)+g\left(0\right)$ for all $a\in\R$.
Moreover, for all $a\neq1$, 
\[
g\left(-1+a\right)=g\left(-\left(1-a\right)\right)=-g\left(1-a\right)=-g\left(1\right)-g\left(a\right)=g\left(-1\right)+g\left(a\right),
\]
and since the equality $g\left(1-1\right)=g\left(1\right)+g\left(-1\right)$
holds as well, we conclude that $g$ is additive. Since $g$ is also
multiplicative, it is the identity (this is well known and easy to
prove, for example one can show that the multiplicativity implies
monotonicity and together with the additivity one gets linearity)
and so $f(x)=g(x+1)-1=x$.
\end{proof}
The next lemma is immediately implied by Lemma \ref{lem:2MultId}.
\begin{lem}
\label{lem:f(2)Id} Let $f:\R\to\R$ be a multiplicative injection.
Assume that also the function ${\displaystyle g(x)=\frac{f(x+1)-1}{f(2)-1}}$
is multiplicative. Then $f(x)=x$ for every $x\in\R$.\end{lem}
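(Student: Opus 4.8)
The plan is to reduce the statement to Lemma \ref{lem:2MultId} by showing that the normalizing factor $f(2)-1$ is in fact forced to equal $1$, after which the function $g$ becomes precisely the function $f(x+1)-1$ treated in that lemma. Before doing anything else I would note that $g$ is well defined and injective: since $f$ is a multiplicative injection we have $f(1)=1$ (because $f(1)=f(1)^2$ and $f$ cannot vanish identically), so $f(2)\neq f(1)=1$ and hence $f(2)-1\neq0$; moreover $x\mapsto f(x+1)$ is injective, and subtracting a constant and dividing by the nonzero constant $f(2)-1$ preserves injectivity, so $g$ is an injection.

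First I would record the elementary values of a multiplicative injection that are already invoked at the start of the proof of Lemma \ref{lem:2MultId}. If $h:\R\to\R$ is multiplicative and injective, then $h(1)=1$ as above, $h(0)=0$ (since $h(0)=h(0)^2$ and $h(0)=1=h(1)$ is excluded by injectivity), and $h(-1)=-1$ (since $h(-1)^2=h(1)=1$ forces $h(-1)=\pm1$, and $h(-1)=1=h(1)$ is impossible). Applying this both to $f$ and to the multiplicative injection $g$, I obtain in particular $f(0)=0$ and $g(-1)=-1$.

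The key step is then a single evaluation of $g$ at $-1$ using its defining formula:
\[
g(-1)=\frac{f(-1+1)-1}{f(2)-1}=\frac{f(0)-1}{f(2)-1}=\frac{-1}{f(2)-1}.
\]
Comparing this with $g(-1)=-1$ gives $f(2)-1=1$, that is, $f(2)=2$.

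Finally, substituting $f(2)=2$ into the definition of $g$ shows that $g(x)=f(x+1)-1$, which is multiplicative by hypothesis. Thus $f$ is a multiplicative injection for which $f(x+1)-1$ is multiplicative, so the hypotheses of Lemma \ref{lem:2MultId} are met, and that lemma yields $f(x)=x$ for every $x\in\R$. I do not expect a genuine obstacle here: the only delicate point is the well-definedness of $g$, i.e. that $f(2)\neq1$, which is immediate from injectivity; once that is settled, the whole argument is the one-line evaluation at $-1$ followed by an appeal to the previous lemma.
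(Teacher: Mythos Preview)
Your argument is correct and follows essentially the same route as the paper's proof: establish $f(0)=0$ and $g(-1)=-1$ from the multiplicativity and injectivity of $f$ and $g$, evaluate $g(-1)$ via its defining formula to force $f(2)=2$, and then invoke Lemma~\ref{lem:2MultId}. You are in fact slightly more careful than the paper, which silently uses the injectivity of $g$ without justifying it, whereas you derive it from the injectivity of $f$ and the nonvanishing of $f(2)-1$.
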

\begin{proof}
As stated in the proof of Lemma \ref{lem:2MultId}, the injectivity
and multiplicativity of $f$ and $g$ imply that $f(0)=0$ and $g(-1)=-1$.
Plugging $x=-1$ into the formula of $g$ implies that $f(2)=2$ and
so $g(x)=f(x+1)-1$. By Lemma \ref{lem:2MultId}, $f(x)=x$ for every
$x\in\R$.\end{proof}
\begin{lem}
\label{lem:Add1Str} Let $n\geq2$, Let $F:\R^{n}\to\R^{n}$ be an
additive bijection. Assume $F$ maps any line passing through a given
point $x_{0}\in\R^{n}$ into a line. Then, $F$ is affine. \end{lem}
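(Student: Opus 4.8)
The plan is to reduce the statement, via additivity, to the ``diagonal'' situation already treated in Lemma \ref{lem:DStrMult}, and then invoke the fact that an additive and multiplicative bijection of $\R$ must be the identity. Since $F$ is additive we have $F(0)=0$, so proving that $F$ is affine is the same as proving it is $\R$-linear, i.e. that $F(tv)=tF(v)$ for all $t\in\R$ and $v\in\R^{n}$.

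First I would record what the line hypothesis yields in the presence of additivity. For any direction $v\neq0$ the line $x_{0}+\R v$ is mapped into a line, and additivity gives $F(x_{0}+tv)=F(x_{0})+F(tv)$; subtracting the constant $F(x_{0})$ shows that $\{F(tv):t\in\R\}$ lies on a line. This set contains $F(0)=0$ and the nonzero vector $F(v)$ (nonzero since $F$ is injective), so it lies on the unique line through these two points, namely $\sp\{F(v)\}$. Thus $F$ in fact maps every line through the origin into a line, and there are functions $f_{v}\colon\R\to\R$ with $F(tv)=f_{v}(t)F(v)$. Additivity of $F$ forces each $f_{v}$ to be additive with $f_{v}(0)=0$ and $f_{v}(1)=1$, and injectivity of $F$ forces $f_{v}$ injective.

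Next I would show that $F(e_{1}),\dots,F(e_{n})$ are linearly independent. If they spanned a proper subspace $W\subsetneq\R^{n}$, then for every $x=\sum_{i}x_{i}e_{i}$ additivity would give $F(x)=\sum_{i}F(x_{i}e_{i})=\sum_{i}f_{e_{i}}(x_{i})F(e_{i})\in W$, contradicting the surjectivity of $F$. This is the one place where bijectivity, rather than mere injectivity, is essential: an additive injection can destroy linear independence, but a surjective additive map cannot collapse the range into a proper subspace. Consequently $\{F(e_{i})\}$ is a basis; writing $A\in\GL_{n}(\R)$ for the linear map sending $F(e_{i})\mapsto e_{i}$, the composition $A\circ F$ is again a bijection, and it maps lines through the origin into lines (because $F$ does, by the previous paragraph, and $A$ is linear). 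From the displayed decomposition it has the diagonal form $(A\circ F)(x)=\sum_{i}f_{e_{i}}(x_{i})e_{i}$, and each $f_{e_{i}}$ is a bijection of $\R$ since $A\circ F$ is a bijection of $\R^{n}$.

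Finally I would apply Lemma \ref{lem:DStrMull} --- that is, Lemma \ref{lem:DStrMult} --- to $A\circ F$: since each $f_{e_{i}}$ is a bijection fixing $0$ and $1$, the lemma gives $f_{e_{1}}=\dots=f_{e_{n}}=:f$ together with the multiplicativity of $f$. But $f$ is also additive, so being an additive and multiplicative bijection of $\R$ it is the identity (the same elementary fact used at the end of the proof of Lemma \ref{lem:2MultId}). Hence $A\circ F=\mathrm{id}$, so $F=A^{-1}$ is linear and therefore affine. The only genuinely delicate point in this argument is the linear-independence step, where surjectivity must be used to exclude the pathological additive maps permitted by a Hamel basis; once the diagonal form is secured, the conclusion follows mechanically from Lemmas \ref{lem:DStrMult} and \ref{lem:2MultId}.
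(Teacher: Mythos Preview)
Your argument is correct. The reduction to lines through the origin via additivity, the linear-independence step using surjectivity, the diagonalization, and the appeal to Lemma~\ref{lem:DStrMult} followed by the additive-plus-multiplicative-implies-identity fact all go through without gaps.

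However, the paper's own proof is considerably shorter and follows a genuinely different route. Rather than only translating to the origin, it observes that for \emph{any} line $l$ and any $x\in l$, additivity gives $F(l)=F(l-x+x_{0})+F(x-x_{0})$; since $l-x+x_{0}$ passes through $x_{0}$, its image lies in a line, and hence so does $F(l)$. Thus $F$ carries \emph{every} line into a line, and the classical fundamental theorem of affine geometry (Theorem~\ref{thm:ClassFTAG}) finishes the job in one stroke. Your approach trades the black-box citation of the classical theorem for the internal machinery of Lemma~\ref{lem:DStrMult} and the additive--multiplicative characterization of the identity on $\R$; this makes your proof more self-contained within the paper's own toolkit, but at the cost of length and of repeating work that Theorem~\ref{thm:ClassFTAG} already encapsulates. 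Note also that the paper's translation trick shows immediately that $F$ is a collineation in \emph{all} directions, whereas you stop at lines through the origin---enough for Lemma~\ref{lem:DStrMult}, but a weaker intermediate conclusion.
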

\begin{proof}
Let $l\subset\R^{n}$ be any line and choose any $x\in l$. By the
additivity of $F$, $F(l)=F(l-x+x_{0})+F(x-x_{0})$ which, by our
assumption, is contained in a line. Thus, by the classical fundamental
lemma of affine geometry (namely, Theorem \ref{thm:ClassFTAG} below),
$F$ is affine.\end{proof}
\begin{prop}
\label{lem:Diag2Str} Let $n\geq2$, Let $F:\R^{n}\to\R^{n}$ be a
bijection satisfying that for every $x=(x_{1},x_{2},...,x_{n})^{T}\in\R^{n}$,
\[
F(x)=\sum_{i=1}^{n}f_{i}(x_{i})e_{i}
\]
where $f_{i}:\R\to\R$ are bijections with $f_{i}(0)=0$ and $f_{i}(1)=1$.
Let $x_{0}\in\R^{n}\setminus\{0\}$ be a vector of the form $x_{0}=\sum_{i=1}^{n}\alpha_{i}e_{i}$
where for each $i$, $\alpha_{i}=1$ or $0$. Assume that $F$ maps
any line through the origin and any line through $x_{0}$ into a line.
Then, $F(x)=x$ for any $x$.\end{prop}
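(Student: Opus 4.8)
The plan is to first strip the hypothesis down to a single multiplicative function, and then feed one carefully chosen line through $x_0$ into the rigidity Lemmas \ref{lem:2MultId} and \ref{lem:f(2)Id}. The line-through-the-origin hypothesis is exactly the input of Lemma \ref{lem:DStrMult}, so applying it first yields $f_1=\cdots=f_n=:f$ with $f$ multiplicative (and injective, since $F$ is a bijection); thus $F(x)=\sum_i f(x_i)e_i$ with $f(0)=0$, $f(1)=1$, and consequently $f(2)\neq1$ and $f(-a)=-f(a)$. It then remains only to prove $f=\mathrm{id}$. The mechanism I will use is this: writing $x_0=\sum_i\alpha_i e_i$ with $\alpha_i\in\{0,1\}$ and $S=\{i:\alpha_i=1\}\neq\emptyset$, for any direction $d$ the set $F(x_0+\R d)=\{\sum_i f(\alpha_i+td_i)e_i\}$ must lie on a line, so projecting onto any two coordinates $j,k$ forces $\{(f(\alpha_j+td_j),\,f(\alpha_k+td_k)):t\in\R\}$ to be collinear in $\R^2$; since $f$ is non-constant, this collinearity is equivalent to an affine relation between the two coordinate functions.

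\textbf{Case 1: $x_0$ has a zero coordinate.} Fix $k\notin S$ and $j\in S$, and take $d=e_j+e_k$. The two moving coordinates are $f(1+t)$ and $f(t)$, so collinearity gives $f(1+t)=af(t)+b$; evaluating at $t=0$ and $t=1$ pins down $b=1$ and $a=f(2)-1$, that is $\frac{f(t+1)-1}{f(2)-1}=f(t)$. The right-hand side is multiplicative, so the hypothesis of Lemma \ref{lem:f(2)Id} is met and $f=\mathrm{id}$.

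\textbf{Case 2: all coordinates of $x_0$ equal $1$,} so $x_0=e_1+\cdots+e_n$ with $n\geq2$ supplying at least two coordinates. Taking $d=e_1-e_2$, the moving coordinates are $f(1+t)$ and $f(1-t)$, and after the substitution $u=1+t$ collinearity gives an affine relation $f(2-u)=af(u)+b$. Evaluating at $u=0,1,2$ and using $f(0)=0$, $f(1)=1$, $f(2)\neq0$ forces $f(2)=2$, $a=-1$, $b=2$, i.e. $f(2-u)=2-f(u)$. Now the decisive step: substituting $u=2w$ and using $f(2w)=2f(w)$ and $f(2(1-w))=2f(1-w)$ converts this into $f(1-w)=1-f(w)$; replacing $w$ by $-w$ and using $f(-w)=-f(w)$ gives $f(w+1)=1+f(w)$, so $g(w)=f(w+1)-1=f(w)$ is multiplicative and Lemma \ref{lem:2MultId} yields $f=\mathrm{id}$.

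The main obstacle is Case 2: with no free zero coordinate to pair against, a single line delivers only the symmetry relation $f(2-u)=2-f(u)$ rather than a shift relation, and the rescaling $u=2w$ is what exploits multiplicativity to upgrade this symmetry into the additive shift $f(w+1)=1+f(w)$ needed to trigger Lemma \ref{lem:2MultId}. A secondary point requiring care throughout is the passage from ``the image is contained in a line'' to a genuine affine relation between coordinate functions, which relies on $f$ being non-constant to exclude the degenerate coefficient in the collinearity equation.
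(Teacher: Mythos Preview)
Your proof is correct. It differs from the paper's in an interesting way: after the first application of Lemma \ref{lem:DStrMult}, the paper defines the conjugated map
\[
G\Bigl(\sum_i x_i e_i\Bigr)=\sum_i \frac{f(x_i+\alpha_i)-\alpha_i}{f(1+\alpha_i)-\alpha_i}\,e_i,
\]
observes that the lines-through-$x_0$ hypothesis for $F$ becomes the lines-through-$0$ hypothesis for $G$, and applies Lemma \ref{lem:DStrMult} a \emph{second} time. This immediately forces $\frac{f(x+1)-1}{f(2)-1}$ to be multiplicative (whenever some $\alpha_i=1$), and Lemma \ref{lem:f(2)Id} finishes; no case split is needed. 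Your argument instead expends only a single line through $x_0$: in Case~1 this yields the shift relation outright, while in Case~2 you recover only the symmetry $f(2-u)=2-f(u)$ and must exploit multiplicativity via the rescaling $u=2w$ to upgrade it to $f(w+1)=1+f(w)$. The paper's route is more uniform; yours is more parsimonious in that it never uses the full pencil through $x_0$, at the cost of the case analysis and the extra algebraic step in Case~2.
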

\begin{proof}
By Lemma \ref{lem:DStrMult}, all of the $f_{i}$'s are identical,
so we denote them by $f$, and $f$ is multiplicative. Next, we define
a function $G:\R^{n}\to\R^{n}$, as follows. 
\[
G\left(\sum_{i=1}^{n}x_{i}e_{i}\right)=\sum_{i=1}^{n}g_{i}(x_{i})e_{i}:=\sum_{i=1}^{n}\frac{f(x_{i}+\alpha_{i})-\alpha_{i}}{f(1+\alpha_{i})-\alpha_{i}}e_{i}.
\]
It is easy to check that for each $i$, $g_{i}(0)=0$ and $g_{i}(1)=1$.
Moreover, one can check that $G$ maps lines through $0$ into lines,
since $F$ maps lines through $x_{0}$ into lines.  By Lemma \ref{lem:DStrMult},
all the $g_{i}$'s are identical, and multiplicative. Since there
exists at least one index $i$ for which $\alpha_{i}=1$, it follows
that ${\displaystyle \frac{f(x+1)-1}{f(2)-1}}$ is multiplicative,
as well as $f$. By Lemma \ref{lem:f(2)Id}, $f(x)=x$.
\end{proof}

\subsection{Proofs of the projective main results}

In this section we prove Theorems \ref{thm:Pr+2} and \ref{thm:Pr+1}.
\begin{proof}
[Proof of Theorem \ref{thm:Pr+2}] By Proposition \ref{prop:PrParll},
$F(\bar{p}_{1}),...,F(\bar{p}_{n+1})$ are in general position. Hence,
by Fact \ref{fact:PrEi} we may assume without loss of generality
that $\bar{p}_{i}=\bar{e}_{i}$ for $i=1,...,n+1$ and that $F(\bar{e}_{i})=\bar{e}_{i}$
for $i=1,...,n+1$. We identify $\Psp\{\bar{e}_{1},...,\bar{e}_{n}\}$
with $\R P^{n-1}$ and recall the representation \eqref{eq:emRnPn}
where $\R P^{n}=\R P^{n-1}\cup\R^{n}$. Proposition \ref{prop:PrParll}also
implies that $F(\R P^{n-1})=\R P^{n-1}$, and so $F(\R^{n})=\R^{n}$.
By the theorem's assumption, $\bar{p}_{n+2}$ belongs to the affine
copy of $\R^{n}$, and since $\bar{p}_{n+2}\neq\bar{e}_{n+1}$, it
corresponds to a point $x_{0}\in\R^{n}\setminus\left\{ 0\right\} $.
By composing $F$ with a diagonal projective-linear transformation
from the right, we may assume without loss of generality that $x_{0}=e_{1}+\cdots+e_{k}$. 

Denote the restriction of $F$ to the affine copy of $\R^{n}$ by
$F':\R^{n}\to\R^{n}$, and observe that $F'$ satisfies the conditions
of Theorem \ref{thm:newFTAG.Parallelism} and so it is of diagonal
form: 
\[
F'(x_{1},....,x_{n})=\sum_{i=1}^{n}f_{i}(x_{i})e_{i},
\]
where $f_{i}:\R\to\R$ are bijections. Since $F(\bar{e}_{n+1})=\bar{e}_{n+1}$,
$F'(0)=0$ and $F'$ maps lines through the origin into lines. Moreover,
$F'$ maps any line through its corresponding point $x_{0}\in\R^{n}$
into a line. By Fact \ref{fact:PrScale} we may assume without loss
of generality that $F'(e_{i})=e_{i}$ for $i=1,...,n$ by composing
$F$ with a diagonal projective-linear transformation from the left.
Thus $f_{i}(0)=0$ and $f_{i}(1)=1$ for all $i$. Then, $F'$ satisfies
the conditions of Lemma \ref{lem:Diag2Str} which implies that $F'$
is the identity mapping. Next, we explain why $F|_{\R P^{n-1}}$ is
also the identity. Let $\bar{p}\in\R P^{n-1}$ and take a projective
line $\bar{l}$ including $\bar{p}$ and $\bar{p}_{n+2}$. Since the
restriction of $F$ to affine copy of $\R^{n}$ is the identity map,
any projective point in $\bar{l}$ different than $\bar{p}$ is mapped
to itself. Since, by assumption, $\bar{l}$ is mapped onto a line,
it follows that $\bar{p}$ must be mapped to itself as well. Thus,
up to compositions with projective-linear transformations, $F$ is
the identity map, which means that $F$ is a projective-linear mapping.
\end{proof}

\begin{proof}
[Proof of Theorem \ref{thm:Pr+1}] By Proposition \ref{prop:PrParll},
$F(\bar{p}_{1}),...,F(\bar{p}_{n+1})$ are generic in the projective
subspace $\Psp\{F(\bar{p}_{1}),...,F(\bar{p}_{n})\}$. Hence, by Fact
\ref{fact:PrEi} we may assume without loss of generality that $\bar{p}_{i}=\bar{e}_{i}$
and $F(\bar{e}_{i})=\bar{e}_{i}$ for $i=1,...,n+1$. We identify
$\Psp\{\bar{e}_{1},...,\bar{e}_{n}\}$ with $\R P^{n-1}$ and recall
the representation \eqref{eq:emRnPn} where $\R P^{n}=\R P^{n-1}\cup\R^{n}$.
Proposition \ref{prop:PrParll} also implies that $F(\R P^{n-1})=\R P^{n-1}$
and so $F(\R^{n})=\R^{n}$. 

Denote the restriction of $F$ to its affine copy of $\R^{n}$ by
$F':\R^{n}\to\R^{n}$, and observe that $F'$ satisfies the conditions
of Theorem \ref{thm:newFTAG.Parallelism-n+1dir}, and so $F'$ is
affine-additive. The theorem's assumptions on $\bar{p}_{n+2}$ imply
that $F'$ also satisfies the conditions of Lemma \ref{lem:Add1Str},
and thus it is affine. By Fact \ref{fact:PrScale}, we may assume
without loss of generality that $F'$ is the identity mapping. Next,
we explain why $F|_{\R P^{n-1}}$ is also the identity. Let $\bar{p}\in\R P^{n-1}$
and take a projective line $\bar{l}$ including $\bar{p}$ and $\bar{p}_{n+2}$.
Since the restriction of $F$ to the affine copy of $\R^{n}$ is the
identity mapping, any point different than $\bar{p}$ is mapped to
itself. Since, by assumption, $\bar{l}$ is mapped onto a line, it
follows that $\bar{p}$ must be mapped to itself. Thus, up to composition
with projective-linear maps, $F$ is the identity map, and thus it
is a projective-linear map.
\end{proof}

\begin{rem}
From Theorem \ref{thm:Pr+1} and Theorem \ref{thm:Pr+2}, one may
deduce a similar result for the unit sphere $\Sph^{n}\sub\R^{n+1}$;
Let $f:\Sph^{n}\to\Sph^{n}$ be an injective mapping which maps any
great circle containing a point of a given set of $n+2$ points (for
example, in general position or with $n+1$ in general position in
$\Sph^{n-1}\subset\Sph^{n}$ and another point not in $\Sph^{n-1}$,
according to Theorems \ref{thm:Pr+1} and \ref{thm:Pr+2}) onto a
great circle. Then $f$ is induced by a linear map $A\in\GL_{n+1}\left(\R\right)$.
Indeed, through any point $x\in\Sph^{n}$ pass at least two great
circles which are mapped onto great circles, and hence $f(x)=-f(-x)$.
Then, we may glue $x$ to $-x$ and induce an injective mapping on
$\R P^{n}$ which satisfies the conditions of either Theorem \ref{thm:Pr+1}
or Theorem \ref{thm:Pr+2}. 
\end{rem}

\section{What happens under a continuity assumption\label{sec:continuousFTAGproof}}

In this section we discuss our previously obtained results, when we
add to the assumptions of the theorems, a continuity assumption. Clearly,
wherever affine-additivity was deduced, a continuity assumption implies
affine-linearity (in fact, even weaker restrictions such as measurability
or local boundness would imply affine-linearity). 

What is less clear, and which we prove below in Proposition \ref{prop:Continuity_affine}
, is that under a continuity assumption, the condition that a mapping
$F:\R^{n}\to\R^{n}$ maps certain lines \textsl{onto} lines may be
replaced by a general collinearity assumption, namely that lines are
mapped \textsl{into} lines. However, to prove this fact we also need
to assume that $ $$F$ is bijective (and not only injective as was
assumed so far). This fact will follow from Brouwer's famous invariance
of domain theorem, which states that any injective continuous mapping
from $\R^{n}$ to $\R^{m}$ is an open mapping (see e.g., \cite[Corollary 19.8]{Bred97}).
The same observation also holds for the projective setting, which
we address in Proposition \ref{prop:Continuity_proj} below.

The above observation can be applied to all the main results in this
note. For example, the continuous versions of Theorems \ref{thm:Poly_n+1},
\ref{thm:NFTAG3D}, and \ref{thm:Pr+2} can be respectively formulated
as follows. We leave it to the reader to combine all other results
with Propositions \ref{prop:Continuity_affine} and \ref{prop:Continuity_proj}
to obtain their continuous versions.
\begin{thm}
Let $m,n\geq2$. Let $v_{1},\dots v_{n+1}\in\R^{n}$ be in general
position. Let $F:\R^{n}\to\R^{n}$ be a continuous bijective mapping
that maps each line in $\mathcal{L}\left(v_{1},\dots,v_{n+1}\right)$
into a line. Then $F$ is a polynomial mapping of the form 
\begin{align*}
\left(F\circ A\right)\left(x_{1},\dots,x_{n}\right) & =\sum_{\delta\in\left\{ 0,1\right\} ^{n}}u_{\delta}\prod_{i=1}^{n}x_{i}^{\delta_{i}}
\end{align*}
where $A\in GL_{n}\left(\R\right)$ and $u_{\delta}\in\R^{n}$ satisfy
the following conditions: 
\begin{itemize}
\item $u_{\delta}=0$ for all $\delta$ with $|\delta|\ge\frac{n+2}{2}$,
and 
\item for each $2\le k<\frac{n+2}{2}$ and every $0\le l\le k-2$ indices
$1\le i_{1}<\cdots<i_{l}\le n$, 
\[
\sum_{\substack{|\delta|=k,\\
\delta_{i_{1}}=\cdots=\delta_{i_{l}}=1
}
}u_{\delta}=0.
\]

\end{itemize}
\end{thm}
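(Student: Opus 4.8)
The plan is to combine the already-proved polynomial structure theorem, Theorem \ref{thm:Poly_n+1}, with the consequences of continuity, just as the surrounding discussion indicates. The essential point is that the continuous, bijective hypothesis of the present theorem is formally weaker than the hypotheses of Theorem \ref{thm:Poly_n+1} in only one respect: here lines are mapped \emph{into} lines rather than \emph{onto} lines. So the first and main step is to upgrade ``into a line'' to ``onto a line''. This is exactly the content promised by Proposition \ref{prop:Continuity_affine}: for a continuous bijection $F:\R^{n}\to\R^{n}$, if a line $l$ is mapped into a line $l'$, then in fact $F(l)=l'$. The mechanism is Brouwer's invariance of domain, by which a continuous injective map $\R^{n}\to\R^{n}$ is open; an open map cannot send an $n$-dimensional neighbourhood into a proper closed subinterval of a line without the image of the line being the whole line, and one argues that a proper subinterval would force the image to miss an open set, contradicting surjectivity (or openness). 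Thus I would first invoke Proposition \ref{prop:Continuity_affine} to conclude that $F$ maps each line in $\L(v_{1},\dots,v_{n+1})$ \emph{onto} a line.

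Once the onto-condition is restored, $F$ satisfies verbatim the hypotheses of Theorem \ref{thm:Poly_n+1}. Applying that theorem directly yields a basis $u_{1},\dots,u_{n}$ of $\R^{n}$ and additive bijections $f_{1},\dots,f_{n}:\R\to\R$ with $f_{i}(1)=1$, so that in the coordinates $x=\sum\alpha_{i}u_{i}$ one has
\[
F(x)=\sum_{\delta\in\{0,1\}^{n}}u_{\delta}\prod_{i=1}^{n}f_{i}^{\delta_{i}}(\alpha_{i}),
\]
with the stated vanishing conditions on the coefficients $u_{\delta}$ (namely $u_{\delta}=0$ for $|\delta|\ge\frac{n+2}{2}$, and the inclusion-exclusion sum conditions for each relevant $k$ and index set). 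The change of basis $u_{i}=\lambda_{i}v_{i}$ described in Remark \ref{rem:compose-A} is precisely an invertible linear map $A\in\GL_{n}(\R)$, so composing on the right with $A$ puts $F$ in the coordinate system where the claimed formula holds.

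The remaining step is to eliminate the additive bijections $f_{i}$ in favour of the genuine polynomial form $\sum_{\delta}u_{\delta}\prod x_{i}^{\delta_{i}}$, and this is where continuity does its essential second job. The functions $f_{i}$ produced by Theorem \ref{thm:Poly_n+1} are additive bijections with $f_{i}(1)=1$; an additive function that is additionally continuous is forced to be linear, hence $f_{i}(\alpha)=\alpha$ for every $\alpha\in\R$. Continuity of each $f_{i}$ follows from continuity of $F$: fixing all coordinates but the $i$-th and using the non-degeneracy of the coefficient structure (in particular that $u_{\delta^{i}}\neq0$, which held in the proof of Theorem \ref{thm:poly-form} by injectivity), one reads $f_{i}$ off as a continuous function of $\alpha_{i}$. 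Substituting $f_{i}=\mathrm{id}$ into the display collapses it to $\left(F\circ A\right)(x_{1},\dots,x_{n})=\sum_{\delta}u_{\delta}\prod_{i=1}^{n}x_{i}^{\delta_{i}}$, with the coefficient conditions carried over unchanged. The main obstacle is the first step: one must be careful that invariance of domain genuinely forces the into-maps to be onto-maps, since everything downstream is a black-box application of existing results; verifying the hypotheses of Proposition \ref{prop:Continuity_affine} and handling the interplay between surjectivity of $F$ and surjectivity onto each individual line is the only place requiring genuine care, the rest being citation of Theorem \ref{thm:Poly_n+1} together with the elementary fact that continuous additive maps are linear.
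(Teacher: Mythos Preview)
Your proposal is correct and follows exactly the route the paper intends: the paper does not write out a proof of this statement but explicitly says it is obtained by combining Theorem~\ref{thm:Poly_n+1} with Proposition~\ref{prop:Continuity_affine}, and then using that continuous additive maps are linear. Your filling-in of the details (invariance of domain to upgrade ``into'' to ``onto'', then citing Theorem~\ref{thm:Poly_n+1}, then reading off continuity of each $f_i$ from continuity of $F$ to force $f_i=\mathrm{id}$) is precisely what the paper leaves to the reader.
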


\begin{thm}
Let $v_{1},v_{2},\dots,v_{5}\in\R^{3}$ be $3$-independent. Let $F:\R^{3}\to\R^{3}$
be a continuous bijective mapping that maps each line in $\mathcal{L}\left(v_{1},\dots,v_{5}\right)$
into a line. Then $F$ is affine. 
\end{thm}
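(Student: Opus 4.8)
The plan is to deduce this statement from the discrete result of Theorem~\ref{thm:NFTAG3D} together with two soft inputs: the passage from ``into'' to ``onto'' under continuity and bijectivity, and the fact that continuous additive functions on $\R$ are linear. Accordingly I would proceed in three stages.

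First I would upgrade the collinearity hypothesis to the onto-hypothesis. Since $F:\R^{3}\to\R^{3}$ is continuous and bijective, Proposition~\ref{prop:Continuity_affine} applies and shows that the assumption that $F$ maps each line of $\mathcal{L}(v_{1},\dots,v_{5})$ \emph{into} a line already forces $F$ to map each such line \emph{onto} a line. The mechanism is that invariance of domain makes $F$ a homeomorphism of $\R^{3}$, so the image $F(\ell)$ of a line $\ell$ is a closed, connected subset of the target line that is homeomorphic to $\R$, and hence is the entire line; this is precisely where bijectivity, rather than mere injectivity, is used.

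With the onto-hypothesis in hand, the hypotheses of Theorem~\ref{thm:NFTAG3D} are met verbatim, so $F$ is affine-additive: there exist bases $u_{1},u_{2},u_{3}$ and $w_{1},w_{2},w_{3}$ of $\R^{3}$ and additive bijections $f_{1},f_{2},f_{3}:\R\to\R$ with $f_{i}(1)=1$ such that, writing $x=\sum_{i=1}^{3}\alpha_{i}u_{i}$,
\[
F(x)-F(0)=\sum_{i=1}^{3}f_{i}(\alpha_{i})\,w_{i}.
\]
It then remains to linearize the scalar functions. Restricting this identity to the line $x=tu_{i}$ gives $F(tu_{i})-F(0)=f_{i}(t)\,w_{i}$; since $w_{i}\neq 0$ and $F$ is continuous, projecting onto the $w_{i}$-component shows that each $f_{i}$ is continuous. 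A continuous additive function on $\R$ is linear, and $f_{i}(1)=1$ then forces $f_{i}(t)=t$ for every $t$. Consequently $F(x)-F(0)=\sum_{i=1}^{3}\alpha_{i}w_{i}$ is the image of $x$ under the linear map carrying the basis $u_{i}$ to $w_{i}$, so $F$ is affine.

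I do not anticipate a genuine obstacle, as the theorem is essentially a corollary of Theorem~\ref{thm:NFTAG3D} combined with two routine facts. The only step requiring care is the first one: the replacement of ``into'' by ``onto'' is not combinatorial and relies on the topological input of invariance of domain together with the surjectivity of $F$, which is exactly what the additional bijectivity hypothesis supplies.
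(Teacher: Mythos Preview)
Your proposal is correct and follows exactly the route the paper intends: Section~\ref{sec:continuousFTAGproof} explicitly states that the continuous versions are obtained by combining Proposition~\ref{prop:Continuity_affine} (to upgrade ``into'' to ``onto'') with the corresponding discrete theorem (here Theorem~\ref{thm:NFTAG3D}), and then observing that continuous affine-additive maps are affine. Your justification of the into-to-onto step via invariance of domain is a slight rewording of the paper's Proposition~\ref{prop:Continuity_affine}, and the final step is precisely the remark at the start of Section~\ref{sec:continuousFTAGproof}.
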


\begin{thm}
Let $n\geq2$. Let $\bar{p}_{1},...,\bar{p}_{n},\bar{p}_{n+1}\in\R P^{n}$
be generic and suppose that a point $\bar{p}_{n+2}\in\R P^{n}$ satisfies
that $\bar{p}_{n+2}\not\in\Psp\{\bar{p}_{1},...,\bar{p}_{n}\}$ and
$\bar{p}_{n+2}\neq\bar{p}_{n+1}$. Let $F:\R P^{n}\to\R P^{n}$ be
a continuous bijective mapping that maps any projective line containing
one of the points $\bar{p}_{1},...,\bar{p}_{n+2}$ into a projective
line. Then $F$ is a projective-linear mapping.
\end{thm}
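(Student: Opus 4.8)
The plan is to reduce the statement to the already-proved Theorem~\ref{thm:Pr+2} by upgrading the collinearity hypothesis, that lines are mapped \emph{into} lines, to the line-\emph{onto}-line hypothesis demanded there. The whole force of the continuity (and bijectivity) assumption lies in this upgrade, which is the content of Proposition~\ref{prop:Continuity_proj}; once it is available the theorem follows by a single citation.

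First I would observe that $F$ is a homeomorphism of $\RP^{n}$. Being a continuous injection of the compact connected manifold $\RP^{n}$ into the Hausdorff space $\RP^{n}$, its image is open by Brouwer's invariance of domain and also compact, hence closed, so by connectedness $F$ is surjective; a continuous bijection from a compact space to a Hausdorff space is then automatically a homeomorphism. The key step is now the into-to-onto passage. Let $\bar l$ be any projective line through one of $\bar p_{1},\dots,\bar p_{n+2}$, and let $\bar l'\supseteq F(\bar l)$ be the projective line guaranteed by hypothesis. The restriction $F|_{\bar l}$ is a continuous injection of the circle $\bar l\cong\RP^{1}$ into the circle $\bar l'\cong\RP^{1}$, hence a homeomorphism onto its image; thus $F(\bar l)$ is a subset of $\bar l'$ homeomorphic to a circle. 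Since a proper compact connected subset of a circle is an arc or a point (neither homeomorphic to a circle, as removing an interior point disconnects an arc but never disconnects a circle), we conclude $F(\bar l)=\bar l'$. Hence $F$ maps every projective line through one of the $\bar p_{i}$ \emph{onto} a projective line.

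Finally I would apply Theorem~\ref{thm:Pr+2}. The points $\bar p_{1},\dots,\bar p_{n+2}$ satisfy its genericity and position hypotheses verbatim, and $F$ is now an injective map sending every projective line through one of these points onto a projective line. Theorem~\ref{thm:Pr+2} then yields that $F$ is projective-linear, which is the desired conclusion. The only genuine work is the topological upgrade, and in the projective setting it is easy precisely because projective lines are compact: a continuous injective image of a circle inside a circle must fill it. This is in marked contrast to the affine case (Proposition~\ref{prop:Continuity_affine}), where lines are copies of $\R$ and a continuous injective image of a line can be a proper interval, so that one must use global bijectivity together with invariance of domain to exclude such folding.
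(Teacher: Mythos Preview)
Your proposal is correct and follows exactly the paper's strategy: upgrade the ``into'' hypothesis to ``onto'' via Proposition~\ref{prop:Continuity_proj}, then invoke Theorem~\ref{thm:Pr+2} directly. Your proof of the upgrade is in fact slightly cleaner than the paper's (which literally transplants the affine argument of Proposition~\ref{prop:Continuity_affine}, invoking global surjectivity and $n$-dimensional invariance of domain), since you exploit the compactness of $\RP^{1}$ to argue purely one-dimensionally that a continuous injection of a circle into a circle is automatically onto---but the overall route is the same.
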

We now state and prove the ingredient which enables us to prove the
above theorems based on the ones which we already proved:
\begin{prop}
\label{prop:Continuity_affine}Let $F:\R^{n}\to\R^{n}$ be a continuous
bijective mapping. Suppose that $F$ maps a certain line $l$ into
a line. Then $F$ maps $l$ onto a line.\end{prop}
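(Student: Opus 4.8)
The plan is to use Brouwer's invariance of domain to promote $F$ to a homeomorphism of $\R^{n}$, and then to observe that the image of the given line is simultaneously open and closed inside the target line, so that it must be the whole line by connectedness. Throughout, write $l'$ for a line with $F(l)\subseteq l'$, which exists by hypothesis; the goal is to show $F(l)=l'$.

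First I would record the topological consequences of the hypotheses. Since $F$ is continuous and injective, invariance of domain (the cited \cite[Corollary 19.8]{Bred97}) shows that $F$ is an open map. Being also a continuous bijection, $F$ then has continuous inverse, so $F$ is a homeomorphism of $\R^{n}$ onto itself; in particular $F$ is a closed map. Using this, I would show that $F(l)$ is closed in $l'$: the line $l$ is closed in $\R^{n}$, so $F(l)$ is closed in $\R^{n}$ because $F$ is a closed map, and since $l'$ is itself closed in $\R^{n}$, the set $F(l)$ is closed in the subspace $l'$.

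Next I would show that $F(l)$ is open in $l'$. Identifying $l$ and $l'$ each with $\R$, the restriction $F|_{l}\colon l\to l'$ is a continuous injection of $\R$ into $\R$. Applying invariance of domain once more, now in dimension one, $F|_{l}$ is an open map, so $F(l)$ is open in $l'$ (equivalently, a continuous injection $\R\to\R$ is strictly monotone and therefore has open image). Finally, $F(l)$ is a nonempty subset of the connected space $l'\cong\R$ that is both open and closed, whence $F(l)=l'$; that is, $F$ maps $l$ onto the line $l'$, as desired.

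I expect the only nontrivial ingredient to be invariance of domain, which is used twice: in dimension $n$ to produce the global homeomorphism, and in dimension $1$ to obtain openness of $F(l)$ along $l'$. It is worth noting precisely where each hypothesis enters, since this is the subtle point of the statement: injectivity and continuity give the openness of $F(l)$ in $l'$, while surjectivity is exactly what makes $F$ a closed map and hence guarantees that $F(l)$ is closed in $l'$. Without bijectivity the map $F|_{l}$ could have image a proper open interval of $l'$, so the upgrade from \emph{into} to \emph{onto} would genuinely fail; this is why the proposition assumes $F$ is a bijection rather than merely an injection.
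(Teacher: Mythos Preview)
Your proof is correct and uses the same key ingredient as the paper, namely Brouwer's invariance of domain, but you package the argument differently. The paper argues by contradiction: after noting that $F(l)$ is an open interval in $l'$, it supposes $F(l)\neq l'$, picks an endpoint $y$ of this interval, finds its preimage $x\notin l$ (using surjectivity), and then uses openness of $F$ in $\R^{n}$ to produce an open neighbourhood $F(A)$ of $y$ disjoint from $F(l)$, which is impossible. You instead upgrade $F$ to a homeomorphism of $\R^{n}$ via invariance of domain, observe that $F(l)$ is therefore closed in $l'$, combine this with openness of $F(l)$ in $l'$, and conclude by connectedness. Both routes are short and rely on exactly the same two facts (openness of $F$ on $\R^{n}$ and openness of $F|_{l}$ on $l'$); your clopen argument is arguably cleaner, while the paper's endpoint argument is slightly more concrete. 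Your closing remark on where each hypothesis is used is accurate and matches the discussion preceding the proposition in the paper.
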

\begin{proof}
Since $F$ carries $l$ into a line, we may view its restriction to
$l$ as a real valued function defined on the real line. Since it
is continuous and injective, it is also an open mapping, and therefore
$F(l)$ is an open interval. Assume that $F(l)$ is not a (full) line.
Then, there exists an endpoint $y\in\R^{n}$ of the open interval
$F(l)$ which is not attained as the image of a point in $l$. Since
$F$ is onto, there exists a point $x\not\in l$ such that $F(x)=y$.
Let $A$ be an open set around $x$ satisfying that $A\cap l=\emptyset$.
By the invariance of the domain theorem, $F$ is an open mapping,
and thus $F\left(A\right)$ is an open set satisfying that $F\left(A\right)\cap F\left(l\right)=\emptyset$,
a contradiction to the fact that $y\in F\left(A\right)$. Therefore
$F$ maps $l$ onto a line.
\end{proof}
The following proposition deals with the projective continuous case:
\begin{prop}
\label{prop:Continuity_proj}Let $F:\RP^{n}\to\RP^{n}$ be a continuous
bijective mapping. Suppose that $F$ maps a certain projective line
$l$ into a projective line. Then $F$ maps $l$ onto a projective
line.\end{prop}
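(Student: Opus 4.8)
The plan is to follow the same strategy as in the proof of Proposition~\ref{prop:Continuity_affine}, exploiting the one feature that makes the projective situation genuinely simpler: a projective line is \emph{compact}. Denote by $l'$ the projective line into which $F$ carries $l$, so that $F(l)\subseteq l'$. Both $l$ and $l'$ are copies of $\RP^{1}$, hence homeomorphic to the circle $\Sph^{1}$, and the restriction $F|_{l}\colon l\to l'$ is a continuous injection (since $F$ is injective and continuous). In particular $F(l)$ is the continuous image of the compact set $l$, hence compact, and therefore closed in the Hausdorff space $l'$.

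The heart of the argument is to show that $F(l)$ is also \emph{open} in $l'$. This is a local, one-dimensional instance of invariance of domain. Fixing $p\in l$, I would choose an open arc $U\ni p$ in $l$ and an open arc $V\ni F(p)$ in $l'$ with $F(U)\subseteq V$, which is possible by continuity of $F$. Identifying $U$ and $V$ with open intervals of $\R$ through projective charts, the map $F|_{U}$ becomes a continuous injection between open subsets of $\R$, and so it is open (by invariance of domain, or, at this dimension, simply because a continuous injection of an interval into $\R$ is strictly monotone and hence open). As this holds in a neighborhood of every point of $l$, the map $F|_{l}$ is open, and in particular $F(l)$ is open in $l'$.

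It then remains only to observe that $F(l)$ is a nonempty subset of the connected circle $l'$ which is simultaneously open and closed; hence $F(l)=l'$, which is exactly the assertion that $F$ maps $l$ \emph{onto} a projective line.

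I expect the only delicate point to be the localization in the openness step: invariance of domain is usually quoted for $\R^{n}$, so one must first restrict the map to arcs and read it off in charts before applying it, rather than applying it to $F$ on all of $\RP^{n}$ at once. It is worth emphasizing that, in contrast with the affine proof, this route makes no use of the global surjectivity of $F$: compactness of the projective line supplies the closedness of $F(l)$ for free, and a nonempty clopen subset of a connected space is the whole space, so there is no ``missing endpoint'' to be chased down via openness of $F$ on the ambient space.
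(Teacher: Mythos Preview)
Your argument is correct, but it is not the route the paper takes. The paper simply remarks that invariance of domain is a local statement and therefore holds on the manifold $\RP^{n}$, and then declares that the proof is ``literally the same'' as that of Proposition~\ref{prop:Continuity_affine}: one shows $F|_{l}$ is open so that $F(l)$ is an open proper arc if it is not all of $l'$, picks a boundary point $y\notin F(l)$, uses the \emph{surjectivity} of $F$ on $\RP^{n}$ to find $x\notin l$ with $F(x)=y$, and then applies invariance of domain to $F$ on an open neighborhood of $x$ in the ambient $\RP^{n}$ to obtain a contradiction.

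Your approach is different in two ways. First, you exploit the compactness of the projective line to get closedness of $F(l)$ in $l'$ directly, and then combine this with openness (from the one-dimensional invariance of domain applied in charts on $l$) and connectedness of $l'$; there is no ``missing endpoint'' to chase. Second, and as you correctly observe, this makes no use of the surjectivity of $F$ on $\RP^{n}$ nor of invariance of domain on the ambient space: you only ever apply invariance of domain in dimension one, on the line itself. In this sense your proof is slightly more elementary and actually yields a marginally stronger statement (continuity and injectivity of $F$ suffice). The paper's route, on the other hand, has the virtue of being a verbatim transplant of the affine argument, which is presumably why the authors chose it.
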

\begin{proof}
As Brouwer's invariance of domain theorem is a local statement, it
holds for general manifolds (without boundary), in particular for
$\RP^{n}$. Thus, the proof of Proposition \ref{prop:Continuity_proj}
is literally the same as the proof of Proposition \ref{prop:Continuity_affine}.
\end{proof}

\section{The fundamental Theorem - An historical account\label{sec:history}}

We have not found an accessible account of the various forms and generalizations
of the fundamental theorems of affine and projective geometry. In
this section we try to produce a list of known results for comparison
with our results and for future references.

For the simplicity of the exposition we shall state the results for
the field $\R$, and indicate, together with references, when they
are also valid for other fields such as $\C$ and $\Z_{p}$. In Section
\ref{sec:OtherFields} below we shall briefly discuss more general
underlying structures for which such results hold.

\subsection{\label{sec:History}Classical theorems of affine and projective geometry}

It seems that the earliest appearance of the fundamental theorems
in the literature was for the real projective plane and goes back
to Von Staudt (1847), see e.g., \cite[page 38]{Coxeter}. Perhaps
the most familiar modern version is the following, which is implied
by Theorem \ref{thm:Artin} below (\cite[Theorem 2.26]{Art57}) by
setting both underlying division rings to be $\R$.
\begin{thm}
Let $n\ge3$. Let $F:\RP^{n}\to\RP^{n}$ be a bijection. Assume that
$F$ takes any three collinear points into collinear points. Then
$F$ is projective linear.
\end{thm}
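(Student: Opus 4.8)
The plan is to deduce this classical statement from our Theorem~\ref{thm:Pr+2}. Since a bijection is in particular injective, the only genuinely missing ingredient is to upgrade the collinearity hypothesis (lines are mapped \emph{into} lines) to the stronger statement that $F$ maps every projective line \emph{onto} a line. Once this is established, I would fix any $n+2$ points of $\RP^{n}$ in general position and apply Theorem~\ref{thm:Pr+2} (valid since $n\ge 3\ge 2$), which immediately gives that $F$ is projective-linear. The easy half is recorded first: given a line $\ell$ and two distinct points $\bar a,\bar b\in\ell$, their images $F(\bar a),F(\bar b)$ are distinct by injectivity, and every further $\bar c\in\ell$ is collinear with $\bar a,\bar b$, so $F(\bar c)$ lies on the unique line $\ell'$ through $F(\bar a),F(\bar b)$. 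Hence $F(\ell)\subseteq\ell'$, and the whole task is to rule out proper containment.

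The heart of the argument, and the step I expect to be the main obstacle, is the following degeneration lemma: \emph{$F$ maps no projective plane into a projective line}. I would prove this by a dimension induction that exploits surjectivity. Suppose, for contradiction, that some $2$-flat $\Phi$ satisfies $F(\Phi)\subseteq\Psi$ with $\Psi$ a line. The inductive step asserts that whenever a $j$-flat maps into a $(j-1)$-flat $\Psi$, one obtains a $(j+1)$-flat mapping into a $j$-flat: pick $\bar p\notin\Phi$ (possible for $j\le n-1$, using $n\ge 3$), set $\Phi^{+}=\Psp(\Phi\cup\{\bar p\})$, and note that for every $\bar t\in\Phi$ the join $\mathrm{line}(\bar p,\bar t)$ is carried into $\mathrm{line}(F(\bar p),F(\bar t))\subseteq\Psp(\{F(\bar p)\}\cup\Psi)$, a flat of dimension at most $j$ that is independent of $\bar t$. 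Since $\Phi^{+}=\bigcup_{\bar t\in\Phi}\mathrm{line}(\bar p,\bar t)$, we get $F(\Phi^{+})\subseteq\Psp(\{F(\bar p)\}\cup\Psi)$. Iterating from $j=2$ up to $j=n-1$ forces all of $\RP^{n}$ to be mapped into a flat of dimension at most $n-1$, i.e. into a proper projective subspace, contradicting the surjectivity of $F$.

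Granting the degeneration lemma, I would finish the onto-ness as follows. Suppose $F(\ell)\subsetneq\ell'$ and choose $\bar q\in\ell'\setminus F(\ell)$; by surjectivity $\bar q=F(\bar p)$ for some $\bar p\notin\ell$. For each $\bar t\in\ell$ the line through $\bar p$ and $\bar t$ is mapped into the line through $F(\bar p)=\bar q$ and $F(\bar t)$; both $\bar q$ and $F(\bar t)$ lie on $\ell'$ and are distinct (as $F(\bar t)\in F(\ell)$ while $\bar q\notin F(\ell)$), so this image line is exactly $\ell'$. As $\bar t$ ranges over $\ell$, the joins $\mathrm{line}(\bar p,\bar t)$ sweep out the plane $\Psp(\ell\cup\{\bar p\})$, whence $F\bigl(\Psp(\ell\cup\{\bar p\})\bigr)\subseteq\ell'$. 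This contradicts the degeneration lemma, and therefore $F(\ell)=\ell'$; that is, $F$ maps every projective line onto a projective line.

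It remains to invoke the earlier result. Choose projective points $\bar p_{1},\dots,\bar p_{n+2}\in\RP^{n}$ in general position, so that in particular $\bar p_{1},\dots,\bar p_{n+1}$ are generic, $\bar p_{n+2}\notin\Psp\{\bar p_{1},\dots,\bar p_{n}\}$, and $\bar p_{n+2}\neq\bar p_{n+1}$. By the previous paragraph $F$ is an injection that maps every projective line containing one of $\bar p_{1},\dots,\bar p_{n+2}$ onto a projective line, so the hypotheses of Theorem~\ref{thm:Pr+2} are met. We conclude that $F$ is projective-linear, as claimed. I expect the only delicate point to be the degeneration lemma; the reduction to Theorem~\ref{thm:Pr+2} and the passage from ``into'' to ``onto'' are then routine consequences of surjectivity and the join structure of projective flats.
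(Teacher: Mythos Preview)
Your argument is correct. The degeneration lemma and the upgrade from ``into'' to ``onto'' are both sound, and the final appeal to Theorem~\ref{thm:Pr+2} is legitimate: that theorem is proved in the paper without invoking the classical fundamental theorem, so there is no circularity.

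That said, your route differs from the paper's. The paper does not give an independent proof of this statement; it is presented in the historical Section~\ref{sec:history} as a direct specialization of Artin's general theorem over division rings (Theorem~\ref{thm:Artin}), with both underlying rings taken to be $\R$. Your approach instead stays internal to the paper: you first use bijectivity together with the join structure of flats to promote collinearity preservation to ``line onto line'', and then feed the result into the paper's own Theorem~\ref{thm:Pr+2}. This has the appeal of exhibiting the classical theorem as a corollary of the paper's finitary projective result, and it makes the role of surjectivity completely explicit. The paper's citation of Artin, by contrast, is shorter and covers a much broader setting (arbitrary division rings, semi-linear maps), at the cost of relying on an external source.
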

The most classical version of the fundamental theorem of affine geometry,
is a simple consequence of its projective counterpart, and states
the following (see e.g., \cite[page 52]{Berger}, letting the underlying
fields both be $\R$)
\begin{thm}
\label{thm:ClassFTAG}Let $n\ge2$. Let $F:\R^{n}\to\R^{n}$ be a
bijection. Assume that $F$ takes any three collinear points into
collinear points. Then $F$ is affine. 
\end{thm}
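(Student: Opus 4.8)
**Proof proposal for the final statement (Theorem \ref{thm:ClassFTAG}, the classical fundamental theorem of affine geometry).**

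The plan is to derive the affine case from the projective fundamental theorem by compactifying $\R^n$ inside $\RP^n$. First I would use the standard embedding $\RP^n = \R^n \cup \RP^{n-1}$ from \eqref{eq:emRnPn}, placing $\R^n$ as the affine copy $e_{n+1}+\R^n$ sitting one unit above the base hyperplane at infinity. A bijection $F:\R^n\to\R^n$ that preserves collinearity has the feature that it maps parallel lines to parallel lines: since collinear triples go to collinear triples and $F$ is a bijection, a whole line goes into a line, and two parallel (hence non-intersecting) lines must map to two non-intersecting lines, which in $\R^n$ forces their images to lie in a common plane and be parallel. This is exactly what lets us extend $F$ to the points at infinity: each parallel class of lines determines a point of $\RP^{n-1}$, and $F$ permutes these classes, so we obtain a well-defined bijective extension $\bar F:\RP^n\to\RP^n$ that restricts to $F$ on the affine part and to a self-map of $\RP^{n-1}$.

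The key step is to verify that $\bar F$ preserves collinearity of projective points, so that the projective version of the theorem applies. On triples of finite points this is the hypothesis on $F$. On triples involving one point at infinity, collinearity of a finite projective line with its direction at infinity is precisely encoded by the parallelism statement above, and on triples lying entirely in $\RP^{n-1}$ one checks that $\bar F|_{\RP^{n-1}}$ sends collinear directions to collinear directions by looking at the plane spanned in $\R^n$ by the corresponding parallel families. Granting this, $\bar F$ is a bijection of $\RP^n$ taking collinear points to collinear points. For $n\ge 3$ the projective fundamental theorem (stated just above) gives directly that $\bar F$ is projective-linear. For $n=2$ one must instead invoke the planar projective result (Von Staudt, as quoted), which is available over $\R$; this is the place where the argument is genuinely two-dimensional and cannot be reduced further.

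Once $\bar F$ is known to be projective-linear, say induced by $A\in\GL_{n+1}(\R)$, I would finish by restricting back to the affine chart. Because $\bar F$ maps the hyperplane at infinity $\RP^{n-1}$ to itself, the inducing matrix $A$ is block upper-triangular with respect to the splitting $\R^{n+1}=\R^n\oplus\R e_{n+1}$, and the induced map on the affine copy $e_{n+1}+\R^n$ is of the form $x\mapsto Bx+b$ with $B$ invertible; hence $F$ is affine, as claimed. The main obstacle I anticipate is not the reduction to the projective theorem but the careful bookkeeping needed to extend $F$ to infinity and certify that the extension preserves \emph{all} projective collinearities, including the delicate triples with two or three points at infinity; the parallelism-preservation lemma is the crux that makes this bookkeeping go through. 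Note finally that $n=2$ falls outside the scope of the first (dimension $\ge 3$) projective theorem, so it is essential that the planar case be handled by the classical Von Staudt result for $\RP^2$.
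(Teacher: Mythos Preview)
The paper does not give its own proof of this classical statement; it merely records it in the historical section and remarks that it ``is a simple consequence of its projective counterpart'' (with a reference to Berger). Your plan---extend $F$ to $\RP^n$ via the embedding \eqref{eq:emRnPn}, apply the projective fundamental theorem, and read off affineness from the fact that the hyperplane at infinity is preserved---is exactly the route the paper has in mind, so at the level of strategy you are aligned with the paper.

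There is, however, a genuine gap in your parallelism step. You write that two parallel lines ``must map to two non-intersecting lines, which in $\R^n$ forces their images to lie in a common plane and be parallel.'' In $\R^n$ with $n\ge 3$ this inference fails: non-intersecting lines can be skew. What is missing is the (standard but not automatic) observation that a bijection preserving collinearity in both directions maps \emph{planes onto planes}. One way to see this: given three non-collinear points $a,b,c$, the plane through them is the union of all lines joining $a$ to a point of the line $bc$; since $F$ and $F^{-1}$ both carry lines onto lines, this description is preserved, so $F$ maps the plane through $a,b,c$ onto the plane through $F(a),F(b),F(c)$. Once you have this, two parallel lines lie in a common plane, their images lie in the image plane and are disjoint, hence parallel---and your extension to $\RP^{n-1}$ is well defined. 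The rest of your outline (checking collinearity for mixed triples, block-triangular form of the inducing matrix, the separate $n=2$ appeal to Von Staudt) is fine.
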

Theorem \ref{thm:ClassFTAG} holds for other fields, such as $\C,\Z_{p}\left(p\neq2\right)$,
and even division rings. However, in such general cases, semi-affine
maps should be considered instead of affine maps. For $\Z_{2}$, the
theorem does not hold in general, as was observed in \cite[Remark 12]{Chub99}.
In the same paper, the authors completely analyze this case.

\subsection{Without surjectivity }

One may remove the surjectivity assumption from the fundamental theorems,
and replace them with other mild conditions. The first example is
obtained by replacing surjectivity by the condition that lines are
mapped \textit{onto} lines (as is assumed in the main results in this
note), see e.g., \cite[page 925]{Lester95}:
\begin{thm}
Let $n\ge2$. Let $F:\R^{n}\to\R^{n}$ be injective. Assume that $F$
takes any line onto a line. Then $F$ is affine. 
\end{thm}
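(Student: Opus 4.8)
The plan is to reduce the statement to the classical fundamental theorem of affine geometry, Theorem~\ref{thm:ClassFTAG}, whose only hypothesis not immediately available here is surjectivity. So the entire task amounts to upgrading injectivity to bijectivity, exploiting the strong ``onto'' condition. By Fact~\ref{fact:trans} I may first translate $F$ so that $F(0)=0$, though this normalization is not really needed.

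The key observation I would establish is that the image $V:=F(\R^{n})$ is an affine subspace. Indeed, for any two distinct points $a,b\in\R^{n}$ the line through them is mapped \emph{onto} a line; since that image line contains the two distinct points $F(a),F(b)$ (distinct by injectivity), it must equal the line through $F(a)$ and $F(b)$, and in particular this entire line lies in $V$. Hence $V$ is nonempty and contains the whole line through any two of its points, so it is closed under binary affine combinations and is therefore an affine subspace of $\R^{n}$.

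Next I would show $V=\R^{n}$. Writing $d=\dim V$, the corestriction $F\colon\R^{n}\to V$ is a bijection, and its inverse $g:=F^{-1}\colon V\to\R^{n}$ again maps every line of $V$ onto a line of $\R^{n}$: given a line $\ell'\subseteq V$ and two distinct points on it, their $F$-preimages determine a line that $F$ sends onto $\ell'$, whence by injectivity $g(\ell')$ is exactly that line. Because affine hulls are generated by iterating the ``line through two points'' operation and $g$ carries lines into lines, one obtains $g(\operatorname{aff}(S))\subseteq\operatorname{aff}(g(S))$ for every finite $S\subseteq V$. Taking $S$ to be an affine basis of $V$ (so $|S|=d+1$ and $\operatorname{aff}(S)=V$) yields $\R^{n}=g(V)\subseteq\operatorname{aff}(g(S))$, an affine subspace of dimension at most $d$. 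Thus $n\le d\le n$, forcing $d=n$ and $V=\R^{n}$, so $F$ is a bijection. Alternatively, this paragraph can be bypassed entirely: applying Theorem~\ref{thm:poly-form} with $v_{i}=e_{i}$ and $m=n$ shows directly that $F$ is a bijection, since the ``onto'' hypothesis holds in particular for the $n$ independent directions $e_{1},\dots,e_{n}$.

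Finally, $F$ is now a bijection $\R^{n}\to\R^{n}$ mapping every line onto a line, hence sending any three collinear points to collinear points, so Theorem~\ref{thm:ClassFTAG} gives that $F$ is affine. I expect the only genuine obstacle to be the surjectivity step, namely ruling out that an injective line-onto-line map can lower dimension; the affine-hull argument above (or, more cheaply, the appeal to Theorem~\ref{thm:poly-form}) is precisely what resolves it, while the other steps are routine.
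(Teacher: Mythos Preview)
The paper does not actually supply a proof of this theorem: it appears in the historical survey (Section~\ref{sec:history}) and is simply quoted with a reference to Lester. So there is no ``paper's own proof'' to compare against.

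Your proposal is correct. The reduction to Theorem~\ref{thm:ClassFTAG} is the natural move, and both of your routes to surjectivity work. The affine-hull argument is fine once one notes that over $\R$ the affine span is obtained by iterating the line-through-two-points closure, so a collinearity-preserving bijection $g\colon V\to\R^{n}$ must carry $V=\operatorname{aff}(S)$ into $\operatorname{aff}(g(S))$, forcing $n\le\dim V$. The alternative via Theorem~\ref{thm:poly-form} is even cleaner and stays entirely within the paper: the hypothesis gives in particular that lines in $\mathcal{L}(e_{1},\dots,e_{n})$ are mapped onto lines, and the ``Moreover'' clause of Theorem~\ref{thm:poly-form} with $m=n$ immediately yields that $F$ is a bijection. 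Either way, Theorem~\ref{thm:ClassFTAG} then finishes the job.
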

Another useful result, and much more general, is stated in \cite[page 122]{Grub92Bodies},
as an easy consequence of Hilfssatz $3$ of Lenz \cite{Lenz} :
\begin{thm}
Let $n\ge2$. Let $F:\R^{n}\to\R^{n}$ be injective. Assume that $F$
takes any three collinear points into collinear points. Also assume
that $F\left(\R^{n}\right)$ is not contained in a line. Then $F$
is affine. 
\end{thm}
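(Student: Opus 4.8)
The plan is to reduce the statement to the planar case and to the triviality of the endomorphism ring of $\R$. After translating I would assume $F(0)=0$, so that it suffices to prove $F$ is linear. First I would record that $F$ maps every line \emph{into} a line: given a line $\ell$ and three distinct points $a,b,c\in\ell$, their images are collinear by hypothesis, and since $F(a)\neq F(b)$ by injectivity they span a unique line $L$; for any further $d\in\ell$ the triple $a,b,d$ is collinear, so $F(d)\in L$, whence $F(\ell)\subseteq L$. The same intersecting-lines argument raises this one dimension at a time: if $P$ is a $2$-plane spanned by two lines through a point $p$, every point of $P$ lies on a line joining a point of the first line to a point of the second, so $F(P)$ is contained in the (at most $2$-dimensional) affine span of the images of those two lines.

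The heart of the matter is the planar case. I would restrict $F$ to a $2$-plane $P$ through the origin on which the image is genuinely two-dimensional; such a plane exists because $F(\R^{n})$ is not contained in a line and $F$ sends planes into planes. On such a plane I claim $F|_{P}$ is $\sigma$-semiaffine for a ring endomorphism $\sigma$ of $\R$: fixing a non-degenerate frame $0,F(e),F(e')$ in the image, one reconstructs the field operations by the usual von Staudt / parallelogram (Desarguesian) constructions and checks that $F$ transports them, producing $\sigma$ with $F(\alpha x)=\sigma(\alpha)F(x)$ and $F(x+y)=F(x)+F(y)$ on $P$. This is precisely the content of Lenz's Hilfssatz $3$. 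Since a ring endomorphism of $\R$ fixes $\Q$, carries squares to squares and hence preserves order, it is monotone and fixes a dense set, so $\sigma=\mathrm{id}$; therefore $F|_{P}$ is linear, recovering on $P$ the rigidity already visible in Theorem \ref{thm:General2Dcoll} and Theorem \ref{thm:ClassFTAG}. Finally, linearity on one non-degenerate plane spreads to all of $\R^{n}$: any plane sharing a line with an already-linearized one inherits linearity through the collinearity relations, and linearity on every $2$-plane through $0$ forces $F$ additive and homogeneous, i.e.\ linear; undoing the initial translation gives that $F$ is affine.

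The step I expect to be the real obstacle is the semiaffinity on $P$ \emph{without a surjectivity hypothesis}. The familiar proof of the fundamental theorem repeatedly uses that two non-parallel image lines actually meet \emph{inside} the image, and that parallel lines map to parallel lines; neither is automatic here, since two parallel lines may well map into two lines that meet at a point lying outside $F(\R^{n})$. Indeed a purely set-theoretic injection $\R^{2}\hookrightarrow\R$ trivially respects the (vacuous) collinearity constraints while collapsing everything into a line, which is exactly why the hypothesis that $F(\R^{n})$ is \emph{not} contained in a line cannot be dropped: it is what guarantees a non-degenerate frame, so that the geometric constructions of $+$ and $\cdot$ have the incidences they require and $\sigma$ is well defined. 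Making the spreading of linearity across planes in dimension $n\ge 3$ completely rigorous, and excluding a priori those planes on which the restricted image degenerates to a line, are the bookkeeping points that would demand the most care.
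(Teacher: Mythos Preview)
The paper does not prove this theorem: it appears in the historical survey section (``Without surjectivity'') and is merely cited as being ``stated in \cite[page 122]{Grub92Bodies}, as an easy consequence of Hilfssatz~3 of Lenz~\cite{Lenz}.'' So there is no argument in the paper to compare against beyond that attribution --- and your proposal invokes exactly the same external lemma for the critical planar step.

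Your sketch is sound in outline and you have correctly isolated the genuine obstacle: without surjectivity onto lines, parallel lines in the domain may map into lines that meet outside the image, so the parallelogram and von~Staudt constructions that produce the field endomorphism $\sigma$ are not automatic. That is precisely what Lenz's Hilfssatz~3 handles, and neither you nor the paper reproduces its proof. The auxiliary steps can be made rigorous along the lines you indicate. For the existence of a non-degenerate plane: if every $2$-plane through $0$ mapped into a line, then any two such planes sharing a nonzero vector would map into the \emph{same} line, and chaining forces $F(\R^{n})$ into a single line, contrary to hypothesis. For the propagation: once $F$ is injective-linear on a plane $P_{1}$, any plane $P_{2}$ sharing a line $L$ with $P_{1}$ must also have two-dimensional image (else $F(P_{2})\subseteq F(L)$, contradicting injectivity on $P_{2}\setminus L$), so the planar argument re-applies on $P_{2}$; the resulting endomorphism is again the identity since $\R$ admits no nontrivial ring endomorphism. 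With these remarks your bookkeeping concerns are resolved.
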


\subsection{Without injectivity}

In a work of Chubarev and Pinelis \cite{Chub99}, the authors proved
that the injectivity assumption can be removed from the fundamental
theorem. They also generalize the collinearity condition to having
$q$-planes mapped into $q$-planes for some $q\in\left\{ 1,\dots n-1\right\} $,
where a $q$-plane is a translate of a $q$-dimensional subspace.
Moreover, their results also hold in a general setting of vector spaces
over division rings. They also carefully treat the case where on of
the underlying fields is $\Z_{2}$. For $\R$ their main result reads
the following.
\begin{thm}
Let $n\ge m\ge2$. Let $F:\R^{n}\to\R^{m}$ be surjective. Let $q\in\left\{ 1,\dots,n-1\right\} $.
Assume that $F$ takes every $q$-plane in $\R^{n}$ into a $q$-plane.
Then $F$ is affine.
\end{thm}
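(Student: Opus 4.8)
The plan is to reduce the statement to the classical bijective fundamental theorem of affine geometry (Theorem \ref{thm:ClassFTAG}) in two moves: first quotient out the non-injectivity of $F$, and then cut the $q$-plane hypothesis down to ordinary collinearity. A preliminary remark is that the hypothesis is only meaningful when $q\le m-1$, since a $q$-plane in $\R^m$ with $q\ge m$ is all of $\R^m$ and imposes no restriction; I therefore assume throughout that $1\le q\le m-1\le n-1$.

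The main structural step is to show that the fibers $F^{-1}(y)$, for $y\in\R^m$, are exactly the cosets of a single $(n-m)$-dimensional linear subspace $K\subseteq\R^n$. Granting this, $F$ descends to a bijection $G:\R^n/K\to\R^m$ with $F=G\circ\pi$, where $\pi:\R^n\to\R^n/K$ is the (affine, surjective) quotient map; identifying $\R^n/K\cong\R^m$, I regard $G$ as a bijection $\R^m\to\R^m$. To establish the claim I would first show that each fiber is an affine subspace and that all fibers are parallel, sharing a common direction space $K$, and then use surjectivity to fix $\dim K=n-m$. Flatness and parallelism of the fibers is exactly the point at which the failure of injectivity must be controlled: using that $q$-planes go into $q$-planes, one argues that the set of directions $v$ along which $F$ is constant through a given point forms a linear subspace, independent of the base point, and here the rigidity of $\R$ (its only field automorphism is the identity) is invoked to exclude any semilinear twisting of the collapse.

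Once $F=G\circ\pi$ is in hand, the remaining work is with the \emph{bijection} $G$. Lifting a $q$-plane $\bar{P}$ of $\R^m\cong\R^n/K$ to a transversal $q$-plane $P\subseteq\pi^{-1}(\bar{P})$ with $\pi(P)=\bar{P}$ gives $G(\bar{P})=F(P)$, which lies in a $q$-plane; hence $G$ inherits the property of sending every $q$-plane into a $q$-plane. Because $G$ is now injective, the classical collineation argument applies: an injective map sending $q$-flats into $q$-flats preserves the dimensions of flats and their intersections, so it sends $q$-planes onto $q$-planes and, by intersecting suitable finite families of $q$-planes down to a common line, sends lines onto lines. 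In particular $G$ takes collinear triples to collinear triples, so Theorem \ref{thm:ClassFTAG} shows $G$ is affine, whence $F=G\circ\pi$ is a composition of affine maps and is therefore affine.

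The genuine obstacle, and the reason this result lies beyond the injective theorems of the excerpt, is the first step: proving that the collapse of $F$ is linear, i.e.\ that the fibers are parallel affine subspaces, using only surjectivity and the flat-into-flat condition, with neither continuity nor injectivity available. The intersection-of-images arguments that work so cleanly for the bijection $G$ are useless here, since two distinct $q$-planes may have images lying in the very same $q$-plane and so the dimensions of image flats cannot be controlled directly; one must instead build the additive and parallel structure geometrically, in the spirit of the parallelogram construction, and only then appeal to the rigidity of $\R$ to upgrade the resulting semi-affine behaviour to genuine affinity.
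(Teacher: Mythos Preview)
This theorem appears in the paper only in the historical survey of Section~\ref{sec:history}, where it is stated as a result of Chubarev and Pinelis \cite{Chub99}; the paper does not supply a proof of its own, so there is nothing to compare your proposal against.

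As to the proposal itself: it is a strategy outline, not a proof, and you explicitly flag the gap yourself. Everything hinges on the claim that the fibers $F^{-1}(y)$ are the cosets of a single $(n-m)$-dimensional linear subspace $K$, but you offer no argument for this beyond naming what would have to be shown. That step is precisely the substantive content of the Chubarev--Pinelis result; once it is in hand, factoring through the quotient and invoking the bijective fundamental theorem is indeed routine. Without injectivity or continuity available, establishing that the collapse is affine and parallel is the whole difficulty, and your final paragraph concedes as much. So the proposal correctly isolates where the work lies but does not do it.

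A secondary gap: in the bijective stage you assert that an injective map carrying $q$-flats into $q$-flats must carry them \emph{onto} $q$-flats and hence respect intersections and dimensions, allowing you to cut down to lines. For a bijection $G:\R^m\to\R^m$ this can be justified (for instance by arguing with $G^{-1}$ as well), but it is not immediate from injectivity alone and needs at least a sentence of argument.
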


\subsection{Collinearity in a limited set of directions }

It seems that results in the spirit of this paper, where one restricts
the family of lines for which collinearity is preserved, have been
considered in the literature mainly for dimension $n=2$. For the
case of the real projective plane, Kasner \cite{Kas1903} proved that
a twice differentiable self-map is projective-linear if it maps each
line in a ``$4$-web'' family of lines into a line, where a ``$4$-web''
consists of four pairwise transversal families of lines, each covering
the domain of the map. Later on, in the 1920\textquoteright s, W.
Blaschke and his co-workers stated that this principle is true without
the differentiability assumption (see \cite[p. 91]{Bla44}) and a
complete proof was of this fact was given in 1935 by Prenowitz \cite[Theorem V]{Pre1903}. 

For the case of higher dimension, although the theorem sounds classical,
we have not found this stated anywhere in the literature. One related
result is that of Shiffman \cite[Theorem 3]{Shif95} where he assumes
that collineations are preserved for points on an open (thus huge)
set of lines. However, his result (as well as the aforementioned results
for the projective plane) is in a richer framework concerning only
segments in an open subset. To formally state his result, we need
some notation. Let $ $$\CP^{n}$ denote the complex projective space.
Denote the complex conjugate of a function $f:\CP^{n}\to\CP^{n}$
by $\bar{f}$. Let ${\cal L}_{\R}^{n}$, ${\cal L}_{\C}^{n}$ denote
the set of lines in the projective real and complex $n$-spaces. We
give the projective spaces $\RP^{n}$, $\CP^{n}$ and the Grassmannians
${\cal L}_{\R}^{n}$, ${\cal L}_{\C}^{n}$  the usual metric topologies.
For a subset $U\subset\RP^{n}$ we write $\L\left(U\right)=\left\{ L\in\L_{\R}^{n}\::\:L\cap U\neq\emptyset\right\} $
(and similarly for a subset $U\subset\CP^{n}$). Shiffman proved the
following:
\begin{thm}
\label{thm:Shiffman} Let $n\ge2$. Let $U$ be a connected open set
in $\RP^{n}$ ($\CP^{n}$) and let $\L_{0}$ be an open subset of
$\L\left(U\right)$ such that $U\sub\bigcup\L_{0}$. Suppose that
$f:U\to\RP^{n}$ ($f:U\to\CP^{n}$) is a continuous injective map
such that $f\left(L\cap U\right)$ is contained in a projective line
for all $L\in\L_{0}$. Then there exists a projective-linear transformation
$A$ such that $f=\left.A\right|_{U}$ (and in the complex case: $f=\left.A\right|_{U}$
or $\bar{f}=\left.A\right|_{U}$).
\end{thm}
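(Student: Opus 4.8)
The plan is to follow the synthetic route indicated by the title of \cite{Shif95}: show that $f$ locally preserves the full ruler-and-incidence structure of the projective space, hence preserves cross-ratio, conclude that it is locally fractional-linear, and finally patch the local pieces into one global transformation using the uniqueness in Theorem \ref{thm:PrLiPo} together with the connectedness of $U$. First I would pass to affine charts and note that, by Brouwer's invariance of domain (as used in Proposition \ref{prop:Continuity_proj}), the continuous injection $f$ is an open map, hence a homeomorphism of $U$ onto an open set; this lets me move incidences back and forth between $U$ and its image without loss of dimension.

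The first substantive step is a local plane-to-plane statement, where the openness of $\L_0$ enters. Fix two lines $\ell_1,\ell_2\in\L_0$ lying in a small $2$-plane patch $\pi$ and meeting at a point of $U$; since $f$ is a function their images $L_1,L_2$ share that point's image and, by injectivity, are distinct, so they span a $2$-plane $\Pi$. For any further point $\bar{z}\in\pi\cap U$, the open family of lines of $\L_0$ through $\bar{z}$ contains a line $m$ meeting both $\ell_1$ and $\ell_2$ at points of $U$; then $f(m)$ is a line through two points of $\Pi$, hence lies in $\Pi$, so $f(\bar{z})\in\Pi$. Thus $f$ maps the plane patch $\pi$ into the plane $\Pi$, and all the incidence configurations used below may be taken coplanar.

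Next I would carry out the incidence step. Since $f$ is a function, whenever two lines of $\L_0$ meet at a point $\bar{x}\in U$ their image lines both contain $f(\bar{x})$, and injectivity forces these image lines to be distinct; thus $f$ preserves concurrence at points of $U$ as well as collinearity along lines of $\L_0$. Using the openness of $\L_0$, for base points taken sufficiently close together one can realise the classical complete-quadrangle construction of the harmonic conjugate of a point with respect to two others so that every auxiliary line lies in $\L_0$ and every auxiliary point lies in $U$. As the entire incidence table of this (now coplanar) construction is preserved and, by injectivity and openness, remains non-degenerate, $f$ preserves harmonic conjugation, and hence, by continuity, the full cross-ratio of collinear points along short segments of lines of $\L_0$. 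A continuous injection of an interval of a projective line into a projective line that preserves cross-ratio is the restriction of a fractional-linear map of the line (over $\R$ the only cross-ratio-preserving field automorphism compatible with continuity is the identity). Running this over the open cone of admissible directions and applying Theorem \ref{thm:PrLiPo} to an $(n+2)$-point frame inside a chart shows that $f$ coincides on a neighbourhood of each point $\bar{p}$ with a unique projective-linear map $A_{\bar{p}}$.

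Finally, any two such local maps agree on the open overlap of their domains, which contains $n+2$ points in general position, so by the uniqueness in Theorem \ref{thm:PrLiPo} they coincide; since $U$ is connected these local maps patch to a single projective-linear $A$ with $f=\left.A\right|_U$. In the complex case the preserved field automorphism of $\C$ need only be continuous, hence is the identity or complex conjugation, yielding $f=\left.A\right|_U$ or $\bar{f}=\left.A\right|_U$. I expect the main obstacle to be the incidence step: guaranteeing, uniformly in a neighbourhood of each point, that every auxiliary line demanded by the harmonic-conjugate construction can be chosen inside the open family $\L_0$ while keeping all constructed points inside $U$, and that the image configurations never degenerate. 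This bookkeeping is exactly where the hypotheses that $\L_0$ is open and that $U\subseteq\bigcup\L_0$ must be exploited; by contrast the passage to fractional-linear maps and the global patching are routine once Theorem \ref{thm:PrLiPo} is in hand.
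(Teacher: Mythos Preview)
The paper does not prove this statement; Theorem~\ref{thm:Shiffman} appears in Section~\ref{sec:history}, the historical survey, and is quoted verbatim as Shiffman's result \cite[Theorem~3]{Shif95} without any proof or sketch. There is therefore no ``paper's own proof'' against which to compare your proposal.

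For what it is worth, your outline is a plausible reconstruction of the standard synthetic argument: use openness of $\L_0$ to realise the complete-quadrilateral construction inside the admissible family, deduce preservation of harmonic conjugacy and hence cross-ratio, conclude local projective-linearity, and patch via Theorem~\ref{thm:PrLiPo}. The step you flag as delicate---ensuring all auxiliary lines and points of the harmonic construction stay in $\L_0$ and $U$---is indeed where the work lies, and would need a careful compactness/uniformity argument to make rigorous. But since the paper simply cites Shiffman and moves on, there is nothing here to grade your proposal against.
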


\subsection{The fundamental theorems on windows }

The classical fundamental theorem of affine geometry, e.g. Theorem
\ref{thm:ClassFTAG}, characterizes self-maps of $\R^{n}$ which map
lines, or segments, into segments. Such maps turn out to be affine.
In the projective case, such maps turn out to be projective-linear.
In Shiffman's Theorem \ref{thm:Shiffman}, the same conclusion holds
when the domain of the transformation is any connected open subset
of the projective $n$-space. His result can be translated to the
affine setting, where projective-linear maps induce a special class
of segment preserving maps when restricted to a subset of $\R^{n}$,
called fractional linear maps. Such maps are defined as follows. Let
$D\subset\R^{n}$ be a domain contained in a half-space. Fix a scalar
product $\iprod{\cdot}{\cdot}$ on $\R^{n}$ and let $A$ be a linear
map, $b,c\in\R^{n}$ two vectors and $d\in\R$ some constant. The
(fractional linear) map
\[
v\to\frac{1}{\iprod cv+d}\left(Av+b\right)
\]
 is defined on the open half-space $\iprod cv<-d$ and is segment
preserving (and injective).

In \cite[Theorem 2.17]{ArFlMi}, the authors prove the following theorem
for convex domains of $\R^{n}$ (or ``windows''). As well as formulating
their result in the affine setting, they use a different approach
than that of Shiffman. 
\begin{thm}
Let $n\ge2$ and let $K\subset\R^{n}$ be a convex set with non-empty
interior. Suppose $F:K\to\R^{n}$ is an injective map which maps each
segment in $K$ into a segment. Then $F$ is a fractional linear map. 
\end{thm}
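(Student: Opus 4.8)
The plan is to show that $F$ coincides on $K$ with the restriction of a single projective-linear transformation, since, as recalled just above, a fractional linear map is precisely the restriction of an element of $\GPL$ to an affine chart lying in a half-space. I would embed $\R^{n}$ into $\RP^{n}$ by $v\mapsto[v:1]$, turning $K$ into a convex body $\hat{K}$ contained in one affine chart and turning $F$ into an injective map $\hat{F}$ that carries every segment of a projective line lying in $\hat{K}$ into a projective line. The target statement then becomes a \emph{local} fundamental theorem of projective geometry: an injective incidence-preserving map defined only on an open convex piece of $\RP^{n}$ must be the restriction of one element of $\GPL$.

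First I would normalize. Fixing an interior point of $\hat{K}$ and a small non-degenerate simplex around it, I obtain $n+2$ points of $\hat{K}$ in general position; using injectivity together with the collinearity-preserving property one checks, by the projective argument already used in Proposition \ref{prop:PrParll}, that their images are again in general position. By Theorem \ref{thm:PrLiPo} there is a unique $g\in\GPL$ agreeing with $\hat{F}$ on these $n+2$ points, and after replacing $\hat{F}$ by $g^{-1}\circ\hat{F}$ I may assume $\hat{F}$ fixes the chosen $n+2$ points while still preserving the local incidence structure. It then suffices to prove that such a normalized map is the identity on $\hat{K}$.

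The engine is a two-dimensional rigidity statement obtained by adapting the proof of Theorem \ref{thm:General2Dcoll} to a convex planar region. Restricting to a plane through three of the fixed points and using that frame as coordinates, the normalized map acquires locally the form of Theorem \ref{thm:General2Dcoll}; invoking a further diagonal direction through the frame reduces this to a diagonal form $(s,t)\mapsto(\phi(s),\psi(t))$ with $\phi,\psi$ injective and fixing the relevant coordinates. Exactly as in the chain of Lemmas \ref{lem:DStrMult}, \ref{lem:2MultId} and \ref{lem:f(2)Id}, the incidence relations then yield the multiplicative functional identities that pin the one-variable data down, forcing $\phi=\psi=\mathrm{id}$ on the interval in question. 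Once the map is the identity on each such plane, I would propagate through all of $\hat{K}$ by slicing: every point of the connected open set $\hat{K}$ is joined to the frame by a short chain of segments whose endpoints are successively pinned by the incidence relations, so the normalized map is the identity everywhere and $F$ is fractional linear.

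The main obstacle, and the reason the affine machinery of Section \ref{sec:FTAG} cannot simply be quoted, is that $\hat{K}$ carries only bounded \emph{segments} rather than full projective lines, and fractional linear maps famously do not preserve parallelism; hence none of the parallel-to-parallel reductions of Theorems \ref{thm:newFTAG.Parallelism-n+1dir} and \ref{thm:newFTAG.Parallelism} apply, and the rigidity must be extracted from the incidence structure alone. Concretely, the one-variable functions are now bijections of an \emph{interval}, not of $\R$, so before the multiplicativity lemmas can be invoked one must verify that the complete-quadrilateral constructions underlying those identities stay inside the convex set for points that are close enough together. Arranging the frame and the chains of segments so that all these auxiliary constructions remain within $\hat{K}$, and disposing of the degenerate case in which $F(K)$ a priori lies in a proper affine subspace (which must be shown to be compatible with the fractional linear conclusion), is the delicate part of the argument.
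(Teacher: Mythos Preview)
The paper does not prove this theorem at all: it appears in the historical survey of Section~\ref{sec:history} as a result quoted from \cite[Theorem~2.17]{ArFlMi}, with no proof given here. So there is no ``paper's own proof'' to compare your attempt against.

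As to your sketch itself: the overall strategy---embed into $\RP^{n}$, normalize by a projective-linear map sending a generic $(n+2)$-frame to itself, then reduce to a one-variable functional identity via the two-dimensional doubly-ruled analysis---is plausible and indeed close in spirit to how the result in \cite{ArFlMi} is obtained. But what you have written is a plan, not a proof, and you have correctly identified the genuine difficulty without resolving it. The lemmas you cite (Lemma~\ref{lem:DStrMult}, Lemma~\ref{lem:2MultId}, Lemma~\ref{lem:f(2)Id}) all concern bijections of the \emph{full} real line; on a bounded interval the multiplicativity and additivity identities are only available for arguments whose products and sums remain in the interval, and it is not automatic that this partial information forces the identity. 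Likewise, the appeal to Proposition~\ref{prop:PrParll} is illegitimate as stated: that proposition assumes full projective lines through the $\bar p_i$ are mapped onto lines, whereas here only segments are available, so the conclusion that the images of your $(n+2)$ frame points are in general position needs a separate argument. Your final paragraph acknowledges both of these points but does not address them; until the interval-versus-line issue is handled (for instance by a careful local-to-global continuation argument, which is where the actual work in \cite{ArFlMi} lies), the proposal remains a heuristic outline rather than a proof.
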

Many other properties of fractional linear maps are investigated in
\cite{ArFlMi}, as well as sufficient conditions which force fractional
linear maps to be affine.

\subsection{\label{sec:OtherFields}General underlying structures }

Let $V,V'$ be two left vector spaces over fields $k$ and $k'$ respectively.
Assume that there exists an isomorphism $\mu$ which maps $k$ onto
$k'$. Then a map $\lambda:V\to V'$ is called semi-linear with respect
to $\mu$ if $\lambda\left(ax+by\right)=\mu\left(a\right)\lambda\left(x\right)+\mu\left(b\right)\lambda\left(y\right)$
for every $x,y\in V$ and all $a,b\in k$. 

A bijective map $\sigma$ between two projective spaces $\bar{V}$,
$\bar{V'}$ of equal dimension is said to be a collineation if for
any projective subspaces $\bar{U_{1}},\bar{U_{2}}\in\bar{V}$, $\bar{U_{1}}\sub\bar{U_{2}}$
implies $\sigma\bar{U_{1}}\sub\sigma\bar{U_{2}}$. 

Perhaps the most well-known modern variation of the fundamental theorem
of projective geometry, with the most general underlying structure
appears in Artin's book \cite[Theorem 2.26]{Art57}:
\begin{thm}
\label{thm:Artin}Let$ $ $V$ and $V'$ be (left) vector spaces of
equal dimension $n\ge3$ over division rings $k$ respectively $k'$,
and let $\bar{V}$, $\bar{V'}$ be the corresponding projective spaces.
Let $\sigma:\bar{V}\to\bar{V'}$ be a bijective correspondence which
maps collinear points to collinear points. Then there exists an isomorphism
$\mu$ of $k$ onto $k'$ and a semi-linear map $\lambda$ of $V$
onto $V'$ (with respect to $\mu$) such that the collineation which
$\lambda$ induces on $\bar{V}$ agrees with $\sigma$ on the points
of $\bar{V}$. If $\lambda_{1}$ is another semi-linear map with respect
to an isomorphism $\mu_{1}$ of $k$ onto $k'$ which also induces
this collineation then $\lambda_{1}\left(x\right)=\lambda\left(\alpha x\right)$
for some fixed $\alpha\neq0$ of $k$ and the isomorphism $\mu_{1}$
is given by $\mu_{1}\left(x\right)=\mu\left(\alpha x\alpha^{-1}\right)$.
For any $\alpha\neq0$ the map $\lambda\left(\alpha x\right)$ will
be semi-linear and induce the same collineation as $\lambda$. The
isomorphism $\mu$ is, therefore, determined by $\sigma$ up to inner
automorphisms of $k$.
\end{thm}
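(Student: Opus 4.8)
The plan is to follow the classical route of von Staudt: normalize $\sigma$ on a projective frame, recover the division-ring operations from pure incidence data, and then assemble a semi-linear map. First I would use that a bijection preserving collinearity also preserves projective subspaces and their dimensions (both $\sigma$ and $\sigma^{-1}$ carry lines to lines, and higher subspaces are generated by lines), so in particular it sends any projective frame to a projective frame. Fix in $\bar{V}$ the frame consisting of the $n$ coordinate points $\bar{e}_1,\dots,\bar{e}_n$ together with the unit point $\overline{e_1+\cdots+e_n}$, a set of $n+1$ points in general position. Their images under $\sigma$ are again in general position, so by the division-ring analogue of Theorem \ref{thm:PrLiPo} there is a projective-linear $\tau$ of $\bar{V'}$ carrying the image frame back to the standard frame of $\bar{V'}$. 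Replacing $\sigma$ by $\tau\circ\sigma$, I may assume that $\sigma$ fixes the standard frame; since $\tau$ is itself induced by a (semi-)linear map, this normalization is harmless and is undone at the end by composing with the linear map beneath $\tau^{-1}$.

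Next I would recover the isomorphism $\mu$. Restrict to a coordinate line $\bar{\ell}\subset\bar{V}$ carrying the three distinguished points that play the roles of $0$, $1$, and $\infty$. Von Staudt's algebra of throws coordinatizes $\bar{\ell}$ by $k\cup\{\infty\}$ and realizes the sum and the product of two points as the outcome of a fixed incidence configuration built from auxiliary points lying off $\bar{\ell}$. Because the ambient space is coordinatized by the division ring $k$ it is Desarguesian---automatically so when the projective dimension is at least three, and as a consequence of the given vector-space structure when it equals two---so these configurations are genuine and their conclusions are forced. Since $\sigma$ preserves every incidence appearing in them and fixes $0,1,\infty$, the induced bijection $\mu\colon k\to k'$ on coordinates satisfies $\mu(a+b)=\mu(a)+\mu(b)$ and $\mu(ab)=\mu(a)\mu(b)$, so $\mu$ is an isomorphism of division rings. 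The unit point of the frame ties the several coordinate lines together, so the same $\mu$ is obtained on each.

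Finally I would set $\lambda\!\left(\sum a_i e_i\right)=\sum\mu(a_i)\,e_i'$, which is semi-linear with respect to $\mu$ by construction, and verify that $\lambda$ induces $\sigma$. This holds on the coordinate lines by the previous step; for a general point I would write it as an intersection of lines joining it to frame points, and the preservation of these joins and meets together with the frame normalization pins the coordinates of $\sigma(\bar{x})$ to be $\mu$ applied entrywise. Undoing the normalization composes $\lambda$ with the linear map beneath $\tau^{-1}$, which remains semi-linear with respect to $\mu$. For the uniqueness clause, if $\lambda_1$ (semi-linear for $\mu_1$) induces the same collineation, then $\lambda_1\circ\lambda^{-1}$ induces the identity on $\bar{V'}$, hence is a nonzero scalar multiple $x\mapsto\alpha x$, giving $\lambda_1(x)=\lambda(\alpha x)$; comparing the two semi-linearity relations yields $\mu_1(x)=\mu(\alpha x\alpha^{-1})$, and the remark that $\lambda(\alpha x)$ is always semi-linear and induces the same collineation is immediate from the definition.

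The hard part will be the second paragraph: realizing the ring operations purely through incidence and checking that $\sigma$ respects them. This is exactly where Desargues' theorem is indispensable, and where the noncommutativity of $k$ must be tracked with care---one must orient the multiplication configuration consistently so that $\mu$ emerges as an isomorphism rather than an anti-isomorphism, and one must first confirm that the constructions of sums and products are independent of the auxiliary choices before $\sigma$ can be said to transport them.
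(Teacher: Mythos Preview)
The paper does not prove this theorem: it is quoted in the historical survey of Section~\ref{sec:history} as Artin's formulation \cite[Theorem~2.26]{Art57}, with no accompanying argument. There is therefore nothing in the paper to compare your proposal against.

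That said, your outline is the classical von Staudt route and is essentially the one Artin himself follows, so it is a sound plan. Two small points to watch when you carry it out. First, in the uniqueness paragraph, the composite $\lambda_{1}\circ\lambda^{-1}$ is a map $V'\to V'$, so the scalar you obtain lies in $k'$, not $k$; you then convert it to an $\alpha\in k$ via $\mu$ (using $\lambda(\alpha x)=\mu(\alpha)\lambda(x)$), and it is this conversion that yields the inner-automorphism formula $\mu_{1}(x)=\mu(\alpha x\alpha^{-1})$. Second, the hypothesis $n\ge 3$ refers to the vector-space dimension, so the projective dimension is $n-1\ge 2$; your parenthetical about Desargues is correct, but be sure to invoke the vector-space origin explicitly in the borderline case $n=3$, since in an abstract projective plane Desargues can fail and the algebra-of-throws construction would then not close up.
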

There exist other variations of the fundamental theorems, mainly concerning
the underlying structure, which we will not state in this note. One
such example can be found in \cite{Mcdonald} for free modules over
local rings.

\begin{appendices}

\section{A fundamental theorem under a Parallelism condition\label{sec:paral}}

In this appendix we prove theorems \ref{thm:newFTAG.Parallelism-n+1dir}
and \ref{thm:newFTAG.Parallelism}. As mentioned before, these results
were stated and proved in a more general setting in \cite{ArtsteinSlomka2011},
yet their proofs in our setting are much simpler. Moreover, one should
also note the simplicity of these results, in comparison to our results
from previous sections.

We begin with the following linear-algebra lemma.
\begin{lem}
\label{lem:parallelism} Let $2\leq n$. Let $v_{1},\ldots,v_{n}$
be linearly independent vectors in $\R^{n}$. Let $F:\R^{n}\rightarrow\R^{m}$
be an injection, $F(0)=0$, and assume $F$ maps each line in $\L(v_{1},\ldots,v_{n})$
onto a line, and moreover, $F(\L(v_{i}))=\L(F(v_{i}))$. That is,
parallel lines in the family are mapped onto parallel lines. Then
the following holds for every $2\le k\le n$. 
\begin{eqnarray}
 & F(v_{1}),F(v_{2}),\dots,F(v_{k})\text{ are linearly independent},\label{eq:1*}\\
 & F(\sp\{v_{1},\dots,v_{k}\})=\sp\{F(v_{1}),\dots,F(v_{k})\}.\label{eq:2*}
\end{eqnarray}
Moreover, 
\begin{align}
F(v_{k}+\sp\{v_{1},\dots,v_{k-1}\})=F(v_{k})+\sp\{F(v_{1}),\dots,F(v_{k-1})\}\label{eq:3*}
\end{align}
\end{lem}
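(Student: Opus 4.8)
The plan is to prove the three assertions \eqref{eq:1*}--\eqref{eq:3*} simultaneously by induction on $k$, after first normalizing. By Fact \ref{fact:linear}, composing $F$ on the right with the linear map sending $e_i\mapsto v_i$ preserves injectivity, the normalization $F(0)=0$, the line-onto-line property, and the parallelism property; so I may assume $v_i=e_i$ throughout and write $w_i:=F(e_i)$, so that the conclusions are statements about the $w_i$. Two facts form the base layer: each $w_i\neq0$ by injectivity, and since $\sp\{e_i\}\in\L(e_i)$ maps onto a line through $F(0)=0$ and through $w_i$, we have $F(\sp\{e_i\})=\sp\{w_i\}$. The single tool used repeatedly is this consequence of the parallelism hypothesis: if $\ell$ is a line of direction $e_j$ and $x\in\ell$, then $F(\ell)=F(x)+\sp\{w_j\}$, because $F(\ell)$ is a line parallel to $\sp\{w_j\}=F(\sp\{e_j\})$ and passes through $F(x)$.

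For the inductive step I assume \eqref{eq:1*}--\eqref{eq:3*} for $k-1$. I would establish \eqref{eq:1*} first: since $w_1,\dots,w_{k-1}$ are independent by induction, it suffices to show $w_k\notin\sp\{w_1,\dots,w_{k-1}\}$. If it were, then $\sp\{w_k\}\subseteq\sp\{w_1,\dots,w_{k-1}\}$; choosing a nonzero $p\in\sp\{w_k\}$, the surjectivity statements $F(\sp\{e_k\})=\sp\{w_k\}$ and (by the induction hypothesis \eqref{eq:2*}) $F(\sp\{e_1,\dots,e_{k-1}\})=\sp\{w_1,\dots,w_{k-1}\}$ would yield nonzero $a\in\sp\{e_1,\dots,e_{k-1}\}$ and $b\in\sp\{e_k\}$ with $F(a)=F(b)=p$; but $a\neq b$ since these subspaces meet only at $0$, contradicting injectivity.

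Next I prove \eqref{eq:2*} by sweeping: the $e_k$-lines $y+\sp\{e_k\}$, $y\in\sp\{e_1,\dots,e_{k-1}\}$, cover $\sp\{e_1,\dots,e_k\}$, and each maps onto $F(y)+\sp\{w_k\}$. As $y$ ranges over $\sp\{e_1,\dots,e_{k-1}\}$, by the induction hypothesis \eqref{eq:2*} the point $F(y)$ ranges over all of $\sp\{w_1,\dots,w_{k-1}\}$, so the union of images equals $\sp\{w_1,\dots,w_{k-1}\}+\sp\{w_k\}=\sp\{w_1,\dots,w_k\}$, giving $F(\sp\{e_1,\dots,e_k\})=\sp\{w_1,\dots,w_k\}$.

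The delicate point, which I expect to be the main obstacle, is \eqref{eq:3*}. Writing a point of the affine hyperplane as $e_k+y$ with $y\in\sp\{e_1,\dots,e_{k-1}\}$, the $e_k$-line through it gives $F(e_k+y)=F(y)+c(y)\,w_k$ for a scalar $c(y)$, with $c(0)=1$ because $F(e_k)=w_k$. I would show $c$ is constant by proving it is unchanged under shifting $y$ by a multiple of any $e_j$ with $j<k$: for $y'=y+re_j$, both $F(e_k+y')-F(e_k+y)$ and $F(y')-F(y)$ lie in $\sp\{w_j\}$, being differences of two points on a common line of direction $e_j$; subtracting, $(c(y')-c(y))\,w_k\in\sp\{w_j\}$, which forces $c(y')=c(y)$ since $w_k\notin\sp\{w_1,\dots,w_{k-1}\}$ by \eqref{eq:1*}. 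Iterating over $j=1,\dots,k-1$ yields $c\equiv c(0)=1$, so $F(e_k+y)=F(y)+w_k$, and applying \eqref{eq:2*} once more gives $F(e_k+\sp\{e_1,\dots,e_{k-1}\})=w_k+\sp\{w_1,\dots,w_{k-1}\}$. The base case $k=2$ is precisely this same argument, with the trivial identities $F(\sp\{e_i\})=\sp\{w_i\}$ playing the role of the induction hypothesis.
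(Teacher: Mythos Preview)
Your proof is correct and follows the same inductive skeleton as the paper: induction on $k$, with \eqref{eq:1*} deduced from the induction hypothesis \eqref{eq:2*} together with injectivity, and \eqref{eq:2*} obtained by sweeping with lines in one direction.

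The one substantive difference is in how you handle \eqref{eq:3*}. The paper first proves the inclusion $F(v_k+\sp\{v_1,\dots,v_{k-1}\})\subseteq F(v_k)+\sp\{F(v_1),\dots,F(v_{k-1})\}$ via a telescoping chain of projections, then observes that the image of $F$ is an $n$-dimensional subspace (so that, after identifying it with $\R^n$, $F$ is a bijection) and applies the same argument to $F^{-1}$ to obtain the reverse inclusion. Your argument is more direct: writing $F(e_k+y)=F(y)+c(y)w_k$ and propagating $c(y')=c(y)$ along each coordinate direction via the independence from \eqref{eq:1*}, you obtain $c\equiv1$ and hence the exact equality $F(e_k+y)=F(y)+w_k$ in one stroke. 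This avoids entirely the detour through bijectivity and the use of $F^{-1}$; in particular, your proof works uniformly for all $m\ge n$ without ever reducing to $m=n$. The paper's approach is perhaps more ``geometric'' (projections and symmetry between $F$ and $F^{-1}$), while yours is a cleaner self-contained computation.
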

\begin{proof}
[Proof of Lemma \ref{lem:parallelism}]By assumption, 
\begin{equation}
F(x+\sp\{v_{i}\})=F(x)+\sp\{F(v_{i})\}\label{eq:web-to-web}
\end{equation}
for all $v_{i}$ and $x\in\R^{n}$.

Next, we proceed by induction on $k$ to prove that (\ref{eq:1*})
and (\ref{eq:2*}) hold. For $m=1$ the claim is trivial. Assume that
(\ref{eq:1*}) and (\ref{eq:2*}) hold for $(k-1)$. Assume that 
\[
F(v_{k})\in\sp\{F(v_{1}),...,F(v_{k-1})\}.
\]
Then, the fact that (\ref{eq:2*}) holds for $k-1$ implies that there
exists $u\in\sp\{v_{1},...,v_{k-1}\}$ such that $F(u)=F(v_{k})$.
The injectivity of $F$ implies that $u=v_{k}$, which contradicts
the fact that $v_{1},...,v_{k}$ are linearly independent.

Next we show that (\ref{eq:2*}) holds for $k$. Denote the projection
onto $\sp\{v_{2},...,v_{k}\}$ along $v_{1}$ by $P_{1}$. Let $x\in\sp\{v_{1},...,v_{k}\}$.
Since $x-P_{1}x\in\sp\{v_{1}\}$ it follows from (\ref{eq:web-to-web})
that 
\[
F(x)\in F(P_{1}x)+\sp\{F(v_{1})\}.
\]
By the induction hypothesis we have 
\[
F(P_{1}x)\in\sp\{F(v_{2}),...,F(v_{k})\},
\]
and so $F(x)\in\sp\{T(v_{1}),...,T(v_{k})\}.$ Thus, $F(\sp\{v_{1},\dots,v_{k}\})\subset\sp\{F(v_{1}),\dots,F(v_{k})\}.$
For the opposite direction, pick a point $y\in\sp\{F(v_{1}),\dots,F(v_{k})\}$.
Take the line through $y$ which is parallel to $F(v_{1})$. This
line must intersect the subspace $\sp\{F(v_{2}),\dots,F(v_{k})\}$
at some point, which by the induction hypothesis is $F(z)$ with $z\in\sp\{v_{2},\dots,v_{k}\}$.
Take the line parallel to $v_{1}$ and passing through $z$. By assumption,
this line is mapped onto the aforementioned line, and thus $y$ is
in $F(\sp\{v_{1},\dots,v_{k}\})$ as required.

Here one should readily notice that the image of $F$ is in fact a
subspace of dimension $n$, and so without loss of generality $m=n$
and $F$ is a bijection. Moreover, $F^{-1}$ maps all lines in $\L(F(v_{1}),\ldots F(v_{n}))$
onto lines, and parallel lines of this family onto parallel lines.

It remains to prove that property (\ref{eq:3*}) holds. For each $i=1,...,k$
denote the projections onto $\sp\{e_{j}\}_{j\neq i}$ along $v_{i}$
by $P_{i}$. Let $x\in v_{k}+\sp\{v_{1},\dots,v_{k-1}\}$ and recursively
define $y_{0}=x$ and $y_{i}=P_{i}y_{i-1}$. Obviously, we have $y_{i-1}-y_{i}\in\sp\{v_{i}\}$
for $1\leq i\leq m-1$ and $y_{k-1}=v_{k}$. Thus, $F(y_{i-1})-F(y_{i})\in\sp\{F(v_{i})\}$
for each $i=1,\dots,k-1$. By writing, 
\[
F(x)=\left[F(x)-F(y_{0})\right]+\left[F(y_{0})-F(y_{1})\right]+\dots+\left[F(y_{k-2})-F(y_{k-1})\right]+T(y_{k-1}),
\]
we obtain $F(x)\in F(v_{k})+\sp\{F(v_{1}),\dots,F(v_{k-1})\}$ and
hence 
\[
F(v_{k}+\sp\{v_{1},\dots,v_{k-1}\})\subset F(v_{k})+\sp\{F(v_{1}),\dots,F(v_{k-1})\}.
\]
Applying the same reasoning for $F^{-1}$ proves the equality.
\end{proof}

Next we prove Theorem \ref{thm:newFTAG.Parallelism}:
\begin{proof}
[Proof of Theorem \ref{thm:newFTAG.Parallelism}]Without loss of generality
(by Fact \ref{fact:trans} and Fact \ref{fact:linear}) we may assume
that $F(0)=0$, $\{v_{i}\}_{i=1}^{n}=\{e_{i}\}_{i=1}^{n}$ is the
standard basis of $\R^{n}$, and that $F(e_{i})=e_{i}$. Here we use
the fact that $F(v_{i})$ are linearly independent, which follows
from Lemma \ref{lem:parallelism}. These assumptions will result in
the extra linear factors $A$ and $B$ in the statement of the theorem.

For each $i=1,\dots,n$ and every $a\in\R$ define $f_{i}:\R\rightarrow\R$
by 
\[
F(ae_{i})=f_{i}(a)e_{i}.
\]
Let $x=(x_{1},x_{2},...,x_{n})^{T}\in\R^{n}$. By Lemma \ref{lem:parallelism}
we have 
\[
F(x)\in F(x_{i}e_{i}+\sp_{j\neq i}\{e_{j}\})=F(x_{i}e_{i})+\sp_{j\neq i}\{F(e_{j})\}=f_{i}(x_{i})e_{i}+\sp_{j\neq i}\{e_{j}\}.
\]
Hence, 
\[
(F(x))_{i}=f_{i}(x_{i})
\]
and so 
\[
F(x)=\sum_{i=1}^{n}f_{i}(x_{i})e_{i},
\]
as required. The fact that the $f_{i}$'s are bijective trivially
holds since $F$ maps lines in $\L\left(e_{1},\dots,e_{n}\right)$
onto lines. 
\end{proof}
Finally, we prove Theorem \ref{thm:newFTAG.Parallelism-n+1dir}:
\begin{proof}
[Proof of Theorem \ref{thm:newFTAG.Parallelism-n+1dir}] Without loss
of generality (by Fact \ref{fact:trans} and Fact \ref{fact:linear})
we may assume that $F(0)=0$, $\{v_{i}\}_{i=1}^{n}=\{e_{i}\}_{i=1}^{n}$
is the standard basis of $\R^{n}$, $v_{n+1}=v=e_{1}+\cdots+e_{n}$.
Here we use the fact that $v_{1},\dots,v_{n+1}$ are $n$-independent.
These assumptions will result in the extra linear factor $A$ in the
statement of the theorem. Similarly, we may assume without loss of
generality that  $F(e_{i})\in\sp\left\{ e_{i}\right\} $ for all $i=1,\dots,n$
and that $F\left(v\right)=v$. Here we use the fact that $F\left(v_{1}\right),\dots,F\left(v_{n+1}\right)$
are $n$-independent, which follows from Lemma \ref{lem:parallelism}.
These assumptions will result in the extra linear factors $B$ in
the statement of the theorem. 

By Theorem \ref{thm:newFTAG.Parallelism}, there exist bijective functions
$f_{1},\dots f_{n}:\R\to\R$ such that
\[
F\left(x\right)=\left(f_{1}\left(x_{1}\right),\dots,f_{n}\left(x_{n}\right)\right)^{T}
\]
for every $x=\left(x_{1},\dots,x_{n}\right)^{T}\in\R^{n}$. 

Let $t\in\R$. since $F$ carries the line $\sp\left\{ v\right\} $
onto itself, it follows that $F\left(tv\right)=\left(f_{1}\left(t\right),\dots,f_{n}\left(t\right)\right)^{T}\in\sp\left\{ v\right\} $
and thus $f_{1}\left(t\right)=\cdots=f_{n}\left(t\right)$ for all
$t\in\R$. In other words, $f_{1},\dots,f_{n}$ are identical, and
are denoted from here on by $f$. 

Next we show that $f$ is an additive function. Let $a,b\in\R$. Since
the line passing through $be_{1}$and $av+be_{1}$ is parallel to
$v$, and since $F$ maps all lines in $\L\left(v\right)$ onto lines
parallel to $F\left(\sp\left\{ v\right\} \right)=\sp\left\{ v\right\} $,
it follows that 
\[
F\left(av+be_{1}\right)-F\left(be_{1}\right)=F\left(\left(a+b\right)e_{1}+ae_{2}+\cdots ae_{n}\right)-F\left(be_{1}\right)\in\sp\left\{ v\right\} .
\]
By the above representation of $F$, together with the fact that $F\left(0\right)=0$,
it follows that 
\[
\left(f\left(a+b\right),f\left(a\right),f\left(a\right),\dots,f\left(a\right)\right)-\left(f\left(b\right),0,0,\dots,0\right)\in\sp\left\{ v\right\} ,
\]
and thus $f\left(a+b\right)-f\left(b\right)=f\left(a\right)$, as
claimed.
\end{proof}
\end{appendices}

\bibliographystyle{amsplain}
\bibliography{BibNewFTAG_April_6_2016}

\end{document}